\newtheorem{theorem}{Theorem}[section]
\newtheorem{proposition}[theorem]{Proposition}
\newtheorem{corollary}[theorem]{Corollary}
\newtheorem{lemma}[theorem]{Lemma}
\newtheorem{definition}[theorem]{Definition}
\newtheorem{remark}[theorem]{Remark}
\newcommand{\N}{\mathbb{N}}
\newcommand{\U}{\mathfrak A}
\newcommand{\E}{\mathcal{E}}
\newcommand{\I}{\mathcal{I}}
\newcommand{\A}{\mathcal{A}}
\def\lg{\mathcal{L}}
\newcommand{\K}{\mathbb{K}}
\newcommand{\Nu}{\mathcal{N}}
\newcommand{\Q}{\mathcal{Q}}
\newcommand{\p}{\mathcal{P}}
\newcommand{\dps}{\displaystyle}
\begin{document}

\title[Coincidence of extendible ideals with their minimal kernel ]{Coincidence of extendible vector-valued  ideals with their minimal kernel}

\author{Daniel Galicer
\and
Rom\'an Villafa\~ne}

\thanks{The authors were partially supported by CONICET PIP 0624, PICT 2011-1456 and UBACyT 20020100100746.
 The second author has a doctoral fellowship from CONICET}

\date{}

\subjclass[2010]{46G25,46B22, 46M05, 47H60}
\keywords{Multilinear mappings, Radon-Nikodým property, Polynomial ideals, Metric theory of tensor products}

\address{Departamento de Matem\'{a}tica - Pab I,
Facultad de Cs. Exactas y Naturales, Universidad de Buenos Aires,
(C1428EGA) Buenos Aires, Argentina and IMAS -  CONICET.} \email{dgalicer@dm.uba.ar}
\email{rvillafa@dm.uba.ar}

\begin{abstract}
We provide coincidence results for vector-valued ideals of multilinear operators. More precisely, if $\U$ is an ideal of $n$-linear mappings  we give conditions for which the following equality  $\mathfrak A(E_1,\dots,E_n;F) = {\mathfrak A}^{min}(E_1,\dots,E_n;F)$ holds isometrically.  As an application, we obtain in many cases that the monomials form a Schauder basis of the space $\mathfrak A(E_1,\dots,E_n;F)$.
Several structural and geometric properties are also derived using this equality. We apply our results to the particular  case where $\mathfrak A$ is the classical ideal of extendible or Pietsch-integral multilinear operators.
Similar statements are given for ideals of vector-valued
homogeneous polynomials.

\end{abstract}

\maketitle

\section*{Introduction}

A natural question in the theory of multilinear operators, and also of homogeneous polynomials, is to find conditions under which an ideal $\U$ coincides isometrically with its minimal kernel $\U^{min}$ (see \cite{Al85a,Al85b,BoRy01,CarDi00,CarGal11,Lew77} and \cite[Section 33]{DefFlo93}, which deal with problems of this nature).
The reason for this is that, in many cases, this allows a tensorial representation of the ideal.
Frequently, the tensor product inherits many structural characteristics from those properties of the spaces involved. For example, a known result due to Gelbaum and Gil de Lamadrid \cite{GeLama61,Lama63} states that the tensor $E_1 \tilde \otimes_\alpha E_2$ has a Schauder basis if both spaces $E_1$ and $E_2$ have a basis.
This can be extended recursively to tensor products of any
number of spaces, since the tensor product is associative (see the comments before Theorem \ref{bases ideales} below).

Other properties (such as separability, Asplund  or the Radon-Nikodým properties), in many cases are also preserved by the tensor product (see for example \cite{Bu03,BuBus06,BuDieDowOja03,CarGal11,RueSte82} and also the references therein).
A tensorial representation of the ideal and these kind of transference results, permit to deduce many attributes of the space $\U(E_1,\dots,E_n;F)$. Moreover, as the elements of $\U^{min}$ may be usually approximated by finite type operators, we obtain the same property for $\U$.

Therefore, we are interested in knowing when the canonical mapping
\begin{equation*}
\xymatrix{\varrho: (E_1'\widetilde\otimes\dots\widetilde\otimes E_n'\widetilde\otimes F;\alpha)\ar@{->>}[r]^{\;\;\;\;\;\;1} & \U^{min}(E_1,\dots,E_n;F)\ar@{^{(}->}[r] & \U(E_1,\dots,E_n;F)}
\end{equation*}
results a quotient mapping or an isometric isomorphism (here $\alpha$ stands for the  tensor norm associated to $\U$).
Obviously, in both cases, we get $\U^{min}$ equals to $\U$.

Lewis (\cite{Lew77} and \cite[33.3]{DefFlo93}) obtained many results of the form $\A^{min}(E;F')=\A(E;F')$ if $\A$ is a maximal operator ideal or, in other words, coincidences of the form $E' \tilde \otimes_\alpha F' = (E \tilde \otimes_{\alpha'} F)'$.
Based on Lewis' work,  the first author and Carando tackled in \cite{CarGal11} an analogous problem for scalar ideals of multilinear operators and polynomials (i.e., where the target space is the scalar field).  As in Lewis' theorem, the Radon-Nikod\'ym property becomes a key ingredient. In this article we follow the lines of \cite{CarGal11} to address the vector-valued case. We stress that the vector-valued problem adds some technical difficulties.

For a given ideal of multilinear operators, we introduce in Definition \ref{def RNp} a vector-valued Radon-Nikodým property in the sense of \cite{Lew77, CarGal11} (where a similar property is given for tensor norms). This definition is related with a coincidence result in $c_0$-spaces.
For extendible ideals (ideals in which every multilinear operator can be extended to any superspace of the domain, see Section~\ref{preliminares}) which enjoy the latter property we prove, in the main theorem, Theorem \ref{Lewis theorem}, that $\U^{min}(E_1,\dots,E_n;F)$ coincides isometrically with $\U(E_1,\dots,E_n;F)$ for Asplund spaces $E_1, \dots, E_n$. It is noteworthy that the main theorem is not only based on what was done by Lewis and Carando-Galicer, but it generalizes both results.
Finally we relate in Theorem \ref{bases ideales}, Proposition~\ref{separabilidad} and Theorem~\ref{RN para tensores}, intrinsic attributes of $\U(E_1,\dots,E_n;F)$ with properties of $E_1,\dots, E_n, F$ and their tensor product. Namely, the existence of Schauder bases, separability, the Radon-Nikodým and Asplund properties.

We give some applications of these results for the ideals of extendible multilinear operators $\E$ and Pietsch-integral multilinear operators $P\I$.
In Corollary \ref{bases en extendibles}, we obtain that if $E_1,\dots, E_n$ are Asplund spaces and $F'$  contains no copy of $c_0$ then the canonical mapping between $\E^{min}(E_1,\dots,E_n;F')$ and $ \E(E_1,\dots,E_n;F')$ is an isometric isomorphism. Moreover, if $E_1',\dots,E_n', F'$ also have a basis, then the monomials with the square ordering form a Schauder basis of $\E(E_1,\dots,E_n;F')$.
With additional hypothesis we obtain, in Corollary~\ref{coro monomials}, similar statements in the case where the target space is not necessarily a dual space. In Corollary~\ref{coro asplund} we prove that $\E(E_1,\dots,E_n;F')$ has the Radon-Nikodým property if and only if $E_1,\dots,E_n,F$ are Asplund spaces.

A classical result due to Alencar \cite{Al85a} shows that the space of Pietsch-integral multilinear operators $P\I$ coincides isometrically, on Asplund spaces, with its minimal kernel (the space of nuclear operators, $\Nu$). We deduce this statement as a particular case of our main results. It is worth mentioning that we do not use the vector measure theory machinery \cite{DieUhl77} as Alencar did. Our perspective is completely different, it strongly relies on tensor products techniques and the fact that $P\I$ enjoys the vector-valued Radon-Nikodým property.
We obtain in Corollary \ref{coroparagrot} a coincidence result for the ideal $G\I$ of Grothendieck-integral multilinear operators as well.

We also state coincidence results for vector-valued ideals of homogeneous polynomials. In particular, we obtain in Section \ref{seccion polinomial} similar theorems for $\p_e$ and $\p_{P\I}$ (i.e., the ideals of extendible and Pietsch-integral homogeneous polynomials, respectively). For the latter ideal, we recover some known results given in \cite{BoRy01,CarDi00}.

The article is organized as follows. In Section \ref{preliminares} we state all the necessary background on ideals of multilinear operators and their associated tensor norms. We also fix some standard notation and recall basic definitions of the theory of Banach spaces. In Section \ref{main results} we prove our vector-valued Lewis type theorems (coincidence results) and some of their consequences. In Section \ref{applications} we give the mentioned applications for the ideals $\E$ and $P\I$.
Finally, in the last section we extend our results to the polynomial context.

\section{Preliminaries}\label{preliminares}

We set some notation: $E$, $F$, $G$ are real or complex Banach spaces, $E'$ is the dual space of $E$ and $J_E: E \longrightarrow E''$ is the canonical embedding of $E$ into its bidual. We denote by $B_E$ the closed unit ball of $E$ and by $FIN(E)$  the class of all finite dimensional subspaces of $E$.

A surjective linear operator $S: E \to F$ is called \emph{a metric surjection or a quotient} if $\|S(x)\|_F=\inf\{\|y\|_E : S(y)=S(x) \},$ for all $x \in E$. As usual, a mapping $I : E \to F$ is called \emph{an isometry} if $\|Ix\|_F = \|x\|_E$ for all $x \in E$. We use the notation  $\overset 1 \twoheadrightarrow$ and  $\overset 1 \hookrightarrow$ to indicate a metric surjection or an isometry, respectively.
We also write $E\overset 1=F$ whenever $E$ and $F$ are isometrically isomorphic spaces (\emph{i.e.,} there exists $I : E \to F$ a surjective isometry).

\bigskip

For Banach spaces $E_1,\dots,E_n$, we denote by $\otimes_{j=1}^n E_j$ the $n$-fold
tensor product and by $\sum_{j=1}^r \lambda_j\cdot x_1^j\otimes\dots\otimes x_n^j$ one of its elements. When the Banach spaces are vector spaces over $\mathbb C$, the scalars are not needed in the previous expression. For simplicity, we use the complex notation, although our results hold for real and complex spaces.

There is no general reference for tensor norms of order $n$ on tensor products of
Banach spaces, though one can find the definition and some
properties in \cite{FloHun02}. All abstract theory on such tensor norms that we are going to
use comes as a natural generalization of the bilinear case for which we refer
to \cite{DefFlo93}. We include some basic definitions.

We say that  $\alpha$ is a \emph{tensor norm  of order $n$} if $\alpha$ assigns to  the normed spaces $E_1,\dots,E_n$ a norm $\alpha \big(\; . \;; \otimes_{j=1}^{n} E_j \big)$ on the $n$-fold tensor product $\otimes_{j=1}^n E_j$ such that
\begin{enumerate}
\item $\varepsilon \leq \alpha \leq \pi$ on $\otimes_{j=1}^n E_j$, where $\varepsilon$ and $\pi$ are the classical injective and projective tensor norms.
\item $\|  T_1\otimes\dots\otimes T_n : (\otimes_{j=1}^{n} E_j;\alpha)   \to (\otimes_{j=1}^{n} F_j;\alpha) \| \leq \|T_1\|\cdots\|T_n\|$ for any operators $T_i \in \mathcal{L}(E_i,F_i)$ (\emph{metric mapping property}).
\end{enumerate}
We denote by $(\otimes_{j=1}^n E_j;\alpha)$ the tensor product $\otimes_{j=1}^{n}E_j$ endowed with the norm $\alpha \big(\; . \; ; \otimes_{j=1}^{n} E_j \big)$, and we write
$(\widetilde{\otimes}_{j=1}^{n}E_j;\alpha)$ for its completion.

A tensor norm $\alpha$ is \emph{finitely generated} if for every normed spaces $E_1,\dots,E_n$ and $z \in \otimes_{j=1}^{n} E_j$ we have $ \alpha (z; \otimes_{j=1}^{n}E_j) = \inf \{ \alpha(z; \otimes_{j=1}^{n}M_j) : M_j \in FIN(E_j),\; z \in M_1\otimes\dots\otimes M_n \}.$ For example, $\pi$ and $\varepsilon$ are finitely generated tensor norms.

If $\alpha$ is a tensor norm of order $n$, then the \emph{dual tensor norm $\alpha'$}\index{s-tensor norm!dual tensor norm} is defined on $FIN$ (the class of finite dimensional spaces) by $(\otimes_{j=1}^n M_j;\alpha') :\overset 1 = ( \otimes_{j=1}^n M_j';\alpha)'$
and on $NORM$ (the class of normed spaces) by
$ \alpha' ( z; \otimes_{j=1}^n E_j ) : = \inf \{ \alpha' (z; \otimes_{j=1}^n M_j ) : z \in M_1\otimes\dots\otimes M_n \},$
the infimum being taken over all of finite dimensional subspaces $M_j$ of $E_j$ whose tensor product contains $z$ (see \cite[Section 15]{DefFlo93}). By definition, $\alpha'$ is always finitely generated. It is well known that $\pi'=\varepsilon$ and $\varepsilon'=\pi$.

\bigskip

We now recall several definitions of the theory of multilinear ideals. Continuous multilinear operators are exactly those bounded on the unit ball.
The space of all continuous $n$-linear operators from $E_1\times\dots\times E_n$ to $F$ is denoted by $\lg(E_1,\dots,E_n;F)$. This class is a Banach space endowed with the norm
\[ \|T\|_{\lg(E_1,\dots,E_n;F)}=\sup_{x_i\in B_{E_i}} \|T(x_1,\dots,x_n)\|_F.\]
An \emph{ideal of continuous vector-valued $n$-linear operators} is a pair $(\U,\|\cdot\|_{\U})$ such that:
\begin{enumerate}
\item[(i)] $\U(E_1,\dots,E_n;F):=\U \cap \lg(E_1,\dots,E_n;F)$ is a linear subspace of $\lg(E_1,\dots,E_n;F)$ and $\|\cdot\|_{\U}$ is a norm which makes the pair
$(\U(E_1,\dots,E_n;F),\|\cdot\|_{\U})$ a Banach space.

\item[(ii)] If $R_j\in \mathcal{L} (X_j;E_j)$ for $1\leq j\leq n$, $T \in \U(E_1,\dots,E_n;F)$ and $S\in\lg(F;Y)$ then $S\circ T\circ (R_1,\dots,R_n)\in \U(X_1,\dots,X_n;Y)$ and

    $ \|S\circ
T\circ (R_1,\dots,R_n)\|_{\U}\le  \|S\|\cdot \|T\|_{\U}\cdot \| R_1\|\cdots\|R_n\|.$

\item[(iii)] The mapping $(\lambda_1,\dots,\lambda_n)\mapsto \lambda_1\cdots \lambda_n$ belongs to $\U(^n\mathbb K; \K)$
and has norm 1.
\end{enumerate}

Note that trivially $\lg$ is an ideal of multilinear operators. We also give the definitions of some classical multilinear operators ideals (endowed with the same norm as $\lg$) and set some usual notation.

We denote by $\lg_f$  the ideal of \textit{finite type multilinear operators}. An $n$-linear operator $T\in\lg_f(E_1,\dots,E_n;F)$ if there exist $(x_k^j)'\in E'_k$ and $f_j\in F$  such that for every $x_k$ in $E_k$ $(1\leq k\leq n)$, $$T(x_1,\dots,x_n)=\sum_{j=1}^r (x_1^j)'(x_1)\cdots (x_n^j)'(x_n)\cdot f_j.$$ The closure of the class of finite type multilinear operators in $\lg(E_1,\dots,E_n;F)$ is the ideal of \textit{approximable multilinear operators} and is denoted by $\lg_{app}(E_1,\dots,E_n;F)$. Finally, $\lg_{wsc}$ stands for the ideal of \textit{weakly sequentially continuous multilinear operators}.  Recall that $T\in\lg_{wsc}(E_1,\dots,E_n;F)$ if for every weakly convergent sequences $x_k^j \overset{w}{\longrightarrow} x_k$ in $E_k$ $(1\leq k\leq n)$, we have $T(x_1^j,\dots,x_n^j) \to T(x_1,\dots,x_n)$ in $F$.

Other well known ideals of multilinear operators that we are going to deal with are the \textit{nuclear, Pietsch integral, Grothendieck integral and extendible} ($\Nu$, $P\I$, $G\I$ and $\E$ respectively). We skip all these definitions now and leave them to Section~\ref{applications}.

\emph{The minimal kernel of $\U$} is defined as the composition ideal $\U^{min} := \overline{\mathfrak{F}}\circ\U \circ (\overline{\mathfrak{F}},\dots,\overline{\mathfrak{F}})$, where $\overline{\mathfrak{F}}$ stands for the ideal of approximable operators. In other words, a multilinear operator $T_1$ belongs to $\U^{min}(E_1,\dots,E_n;F)$ if it admits a factorization
\begin{equation} \label{factominimal}
\xymatrix{ E_1\times\dots\times E_n  \ar[rr]^{\;\;\;\;\;\;\;\;T_1} \ar[d]_{(R_1,\dots,R_n)} & & F \\
 X_1\times\dots\times X_n\ar[rr]^{\;\;\;\;\;\;\;\;T_2}  & & Y \ar[u]_S },
\end{equation}
where
$S,R_1,\dots,R_n\in \overline{\mathfrak{F}}$ and $T_2\in\U(X_1,\dots,X_n;Y)$.
The $\U$-minimal norm of $T_1$ is given by $\|T_1\|_{\U^{min}} := \inf \{\|S\|\cdot  \|T_2\|_{\U}\cdot \|R_1\|\cdots\|R_n\| \}$, where the infimum runs over all possible factorizations as in~(\ref{factominimal}).

It is  important to mention a useful property of $\U^{min}$: if $E_1',\dots,E_n'$ and $F$  have the metric approximation property, then $\U^{min}(E_1,\dots,E_n;F)\overset1{\hookrightarrow}\U(E_1,\dots,E_n;F)$ and also  $\U^{min}(E_1,\dots,E_n;F)$ coincides isometrically with $\overline{\lg_f(E_1,\dots,E_n;F)}^{\|\; \cdot \;\|_{\U}}$. This and other properties of $\U^{min}$ can be found in {\cite{Flo01}}. An ideal of multilinear operators is said to be \emph{minimal} \index{minimal polynomial ideal} if $\U^{min}\overset{1}{=}\U$. For example, the ideals of nuclear and approximable multilinear operators are minimal. Moreover, $(G\I)^{min}=({P\I})^{min}=\Nu$ and $(\lg)^{min}=\lg_{app}$ (see \cite{Mu10} for the polynomial version of these equalities).

If  $\U$ is a vector-valued ideal of multilinear operators, its \emph{associated tensor norm} is the unique finitely generated tensor norm $\alpha$, of order $n+1$, satisfying $$\U(M_1,\dots, M_n;N) \overset 1 =  (M_1' \otimes \dots \otimes M_n' \otimes N;{\alpha})$$ for every finite dimensional spaces $M_1,\dots,M_n,N$. In that case we write $\U\sim\alpha$.
For example, $\lg\sim\varepsilon$, $\lg_{app}\sim\varepsilon$ , $\Nu\sim\pi$, $P\I\sim\pi$ and $G\I\sim\pi$.
Notice that $\U$ and $\U^{min}$ have the same associated tensor norm since they coincide isometrically on finite dimensional spaces.

Let $\U\sim\alpha$, the following theorem due to Floret~\cite[Theorem 4.2]{Flo01} exhibits a close relation between $(E_1' \otimes \dots \otimes E_n' \otimes F;\alpha)$ and $\U^{min}(E_1,\dots,E_n;F)$.

\begin{theorem} (Representation theorem for minimal ideals.) \label{representation theorem minimal}

Let $E_1,\dots, E_n, F$ be Banach spaces and let $\U$ be a minimal ideal with associated tensor norm $\alpha$. There is a natural quotient mapping
\begin{equation*}
(E_1' \widetilde\otimes \dots \widetilde\otimes E_n' \widetilde\otimes F;\alpha) \overset{1}{\twoheadrightarrow} \U(E_1,\dots,E_n;F)
\end{equation*}
defined on $E_1' \otimes \dots \otimes E_n' \otimes F$ by the obvious rule $$ \sum_{j=1}^r (x_1^j)' \otimes  \dots \otimes  (x_n^j)' \otimes f_j \mapsto  \sum_{j=1}^r (x_1^j)'(\cdot) \dots (x_n^j)'(\cdot) f_j.$$
\end{theorem}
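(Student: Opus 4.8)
The plan is to prove the statement in two halves: first that the displayed linear map $\varrho$ is well defined and contractive, so that it extends to the completion; and then that this extension is a metric surjection onto $\U(E_1,\dots,E_n;F)$. The trivial half of the final norm identity is immediate from contractivity, so the work lies in producing, for each target operator, preimages of nearly minimal $\alpha$-norm.

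For contractivity, fix an element $z=\sum_{j=1}^r (x_1^j)'\otimes\dots\otimes(x_n^j)'\otimes f_j$ and let $T=\varrho(z)$ be the associated finite type operator. Given $\delta>0$, I would invoke the finite generation of the associated tensor norm $\alpha$ to choose finite dimensional subspaces $G_i\subseteq E_i'$ and $N\subseteq F$ with $z\in G_1\otimes\dots\otimes G_n\otimes N$ and $\alpha(z;G_1\otimes\dots\otimes G_n\otimes N)\le \alpha(z)+\delta$. The key technical move is to factor $T$ through finite dimensional spaces without leaving the class: writing $u_i\colon E_i\to G_i'$ for the canonical evaluation map (of norm $\le 1$) and $\iota_N\colon N\hookrightarrow F$ for the inclusion, one checks $T=\iota_N\circ\widehat T\circ(u_1,\dots,u_n)$, where $\widehat T\in\U(G_1',\dots,G_n';N)$ is the operator corresponding to $z$ under the finite dimensional identification $\U(G_1',\dots,G_n';N)\overset1=(G_1\otimes\dots\otimes G_n\otimes N;\alpha)$ (using $(G_i')'=G_i$). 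The ideal inequality then gives $\|T\|_{\U}\le\|\iota_N\|\,\|\widehat T\|_{\U}\prod_i\|u_i\|\le \alpha(z;G_1\otimes\dots\otimes G_n\otimes N)\le\alpha(z)+\delta$, and letting $\delta\to0$ yields $\|T\|_{\U}\le\alpha(z)$. Hence $\varrho$ is contractive on the algebraic tensor product and extends to a contraction on $(E_1'\widetilde\otimes\dots\widetilde\otimes E_n'\widetilde\otimes F;\alpha)$.

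For the second half I would use minimality, $\U\overset1=\U^{min}$, so that any $T\in\U(E_1,\dots,E_n;F)=\U^{min}(E_1,\dots,E_n;F)$ admits, for each $\delta>0$, a factorization $T=S\circ T_2\circ(R_1,\dots,R_n)$ as in the definition of the minimal kernel, with $S,R_i$ approximable and $\|S\|\,\|T_2\|_{\U}\prod_i\|R_i\|\le\|T\|_{\U}+\delta$. Approximating $S$ and each $R_i$ in operator norm by finite rank maps $S_k,R_i^k$, the composites $T_k:=S_k\circ T_2\circ(R_1^k,\dots,R_n^k)$ are finite type, so $T_k=\varrho(z_k)$ for explicit tensors $z_k$. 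Factoring each $T_k$ through its finite dimensional images exactly as above gives $\alpha(z_k)\le\|S_k\|\,\|T_2\|_{\U}\prod_i\|R_i^k\|$, and the same bound applied to a telescoping decomposition of $T_k-T_m$ into $n+1$ finite type pieces (peeling off one factor at a time) shows $\alpha(z_k-z_m)\to0$. Thus $(z_k)$ is Cauchy and converges to some $w$ in the completion with $\alpha(w)\le\|S\|\,\|T_2\|_{\U}\prod_i\|R_i\|\le\|T\|_{\U}+\delta$; continuity of $\varrho$ together with $T_k\to T$ gives $\varrho(w)=T$. Since $\delta>0$ and the factorization are arbitrary, $\inf\{\alpha(w):\varrho(w)=T\}\le\|T\|_{\U}$, while the reverse inequality is immediate from contractivity, so $\varrho$ is a metric surjection.

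The main obstacle is the second half: producing an actual preimage in the completion rather than merely a norm-bounded sequence of finite type approximants. The crux is the telescoping estimate, which upgrades ``$(z_k)$ is bounded'' to ``$(z_k)$ is Cauchy'' and rests on the uniform control $\alpha\big(S'\circ T_2'\circ(R_1',\dots,R_n')\big)\le\|S'\|\,\|T_2'\|_{\U}\prod_i\|R_i'\|$ for finite rank data, itself a consequence of the finite dimensional representation identity and the metric mapping property of $\alpha$. A secondary subtlety, already present in the first half, is that the functionals $(x_i^j)'$ live in $E_i'$ rather than in the dual of a finite dimensional subspace of $E_i$; this is precisely what the factorization through the evaluation maps $u_i\colon E_i\to G_i'$ is designed to circumvent.
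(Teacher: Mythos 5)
Your argument is correct, but there is nothing in the paper to match it against: the paper does not prove this theorem, it quotes it from Floret \cite[Theorem 4.2]{Flo01}. What you have written is essentially a reconstruction of the standard (Floret-style) proof: contractivity via finite generation of $\alpha$, factorization through the evaluation maps $u_i : E_i \to G_i'$, and the defining isometry $\U(G_1',\dots,G_n';N)\overset{1}{=}(G_1\otimes\dots\otimes G_n\otimes N;\alpha)$ (valid since $(G_i')'=G_i$ in finite dimensions); then the metric surjection via minimality, finite-rank approximation of the approximable factors, and the uniform bound on the $\alpha$-norm of the tensor representing a finite-type composite. Both halves check out, and you correctly identify the crux (upgrading boundedness of $(z_k)$ to Cauchyness via telescoping). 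Two routine points should be made explicit in a write-up. First, the canonical linear map from $E_1'\otimes\dots\otimes E_n'\otimes F$ into $\lg(E_1,\dots,E_n;F)$ is injective; this is what makes ``the'' tensor $z_k$ of $T_k$ well defined and guarantees that $z_k-z_m$ is the tensor of $T_k-T_m$, so that your telescoping decomposition genuinely bounds $\alpha(z_k-z_m)$. Second, in the key uniform bound you should astrict $S_k$ to its finite-dimensional range $N_k$ and each $R_i^k$ to $M_i:=R_i^k(E_i)$ (astrictions preserve norms), apply the ideal property to get $\|S_k\circ T_2|_{M_1\times\dots\times M_n}\|_{\U}\le\|S_k\|\,\|T_2\|_{\U}$, use the finite-dimensional identification to produce $w_k\in M_1'\otimes\dots\otimes M_n'\otimes N_k$ with $\alpha(w_k)$ equal to that $\U$-norm, and push forward by $(R_1^k)'\otimes\dots\otimes(R_n^k)'\otimes\iota_{N_k}$ via the metric mapping property; naturality of the tensor--operator correspondence then gives $\varrho(z_k)=T_k$. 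With these details spelled out, your proof is complete and is, for practical purposes, the proof the paper delegates to \cite{Flo01}.
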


Therefore, for any ideal $\U\sim\alpha$ (not necessarily minimal) we get $(E_1' \widetilde\otimes \dots \widetilde\otimes E_n' \widetilde\otimes F;\alpha) \overset{1}{\twoheadrightarrow} \U^{min}(E_1,\dots,E_n;F)$.
It should be mentioned also that, if $E_1',\dots,E_n',F$ have the bounded approximation property, then $(E_1' \widetilde\otimes \dots \widetilde\otimes E_n' \widetilde\otimes F;\alpha) \overset{1}{=} \U^{min}(E_1,\dots,E_n;F),$  as can be seen in \cite{Flo01}.

We mainly work with extendible ideals of multilinear operators. Recall that an ideal $\U$ is \textit{extendible} if the following holds:
for every Banach spaces $E_1,E_2,\dots, E_n,F$, superspaces $G_1 \supset E_1, \dots, G_n \supset E_n$ and $T \in \U(E_1, \dots, E_n;F)$, there exists an extension $\widetilde{T} \in \U(G_1, \dots, G_n ;F)$ of $T$ with the same $\U$-norm.
Some examples of extendible ideals are $P\I$ and $\E$ (this property is studied in \cite{Car99,CarZal99,KirRya98}, but for the polynomial analogues of these ideals).

We end this section refereing the reader to \cite{AlbKal06,DefFlo93,DieUhl77} for all the standard (but unexplained) definitions of Banach space theory that appear in this article.


\section{Coincidence on ideals of multilinear operators}\label{main results}

The representation theorem for minimal ideals~\ref{representation theorem minimal} gives a natural norm one inclusion from $(E_1'\widetilde\otimes\dots\widetilde\otimes E_n'\widetilde\otimes F;\alpha)$ to $\U(E_1,\dots,E_n;F)$ defined by
\[
\xymatrix{\varrho:(E_1'\widetilde\otimes\dots\widetilde\otimes E_n'\widetilde\otimes F;\alpha)\ar@{->>}[r]^{\;\;\;\;\;\;1} & \U^{min}(E_1,\dots,E_n;F)\ar@{^{(}->}[r]^{\;\;\;\;\;\leq \ 1} & \U(E_1,\dots,E_n;F)}.
\]
An important observation is that, if $\varrho$ is a quotient mapping or an isometric isomorphism, we obtain that $\U^{min}= \U$.

To study when the mapping $\varrho$ is actually a quotient mapping a condition on the tensor norm is needed. A fundamental ingredient  both in  \cite{Lew77} and \cite{CarGal11}, where coincidence results are studied (in the operator frame and multilinear/polynomial context, respectively), is the Radon-Nikodým property for tensor norms. Based on the definitions therein, we give a vector-valued version of this notion not for tensor norms but for ideals of multilinear operators.

\begin{definition}\label{def RNp}
Let $\U \sim \alpha$ be an ideal of multilinear operators and $F$ be a Banach space. We say that $\U$ has the \textit{$F$-Radon-Nikodým property} ($F$-RNp) if $$(\ell_1(J_1) \widetilde\otimes \dots \widetilde\otimes \ell_1(J_n) \widetilde\otimes F, \alpha) \overset 1 \twoheadrightarrow  \U(c_0(J_1),\dots,c_0(J_n);F),$$
for all $J_1,\dots,J_n$ index sets.

If $\U$ has the $F$-RNp for all $F$, we say that $\U$ has the vector-RNp.
\end{definition}

 The previous definition says that if $\U$ has the $F$-RNp then $\U(c_0(J_1),\dots,c_0(J_n);F)$ coincides with the ideal $\U^{min}(c_0(J_1),\dots,c_0(J_n);F)$
for all $J_1,\dots,J_n$ index sets.

In many cases, it is enough to check this property only for countable index sets (i.e., $J_1 = \dots = J_n = \N$), as we show in the following proposition.

\begin{proposition}\label{teo cociente}
  Let $\U\sim\alpha$ be an ideal such that
  $$(\ell_1 \widetilde\otimes \dots \widetilde\otimes \ell_1 \widetilde\otimes F, \alpha) \overset 1 \twoheadrightarrow  \U(c_0,\dots,c_0;F).$$
  If $F$ contains no copy of $c_0$ or $\U\subseteq\lg_{wsc}$, then $\U$ has the $F$-RNp.
\end{proposition}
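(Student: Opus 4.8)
The plan is to show that the norm-one inclusion $\varrho=\varrho_J$ attached to arbitrary index sets $J_1,\dots,J_n$ is a metric surjection; since $\varrho$ factors as $(\ell_1(J_1)\widetilde\otimes\dots\widetilde\otimes\ell_1(J_n)\widetilde\otimes F;\alpha)\overset{1}{\twoheadrightarrow}\U^{min}(c_0(J_1),\dots,c_0(J_n);F)\hookrightarrow\U(c_0(J_1),\dots,c_0(J_n);F)$, with the first arrow a metric surjection by the representation theorem, it is enough to prove that every $T\in\U(c_0(J_1),\dots,c_0(J_n);F)$ is hit by some $w$ with $\alpha(w)\le\|T\|_{\U}+\varepsilon$. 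The starting observation is that any $w$ in the completed tensor product is supported on countable subsets $A_i\subseteq J_i$ (each $\ell_1(J_i)$-functional is countably supported, and $w$ is a norm limit of a sequence of finite tensors), so $\varrho(w)=\varrho(w)\circ(Q_{A_1},\dots,Q_{A_n})$, where $Q_{A_i}$ denotes the coordinate projection. Hence the whole statement hinges on the following Main Lemma: \emph{under either hypothesis, every such $T$ is countably supported, i.e.\ there are countable $A_i\subseteq J_i$ with $T=T\circ(Q_{A_1},\dots,Q_{A_n})$.}

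Granting the Main Lemma, I would finish by localisation. Writing $T=T|_{A}\circ(\pi_{A_1},\dots,\pi_{A_n})$, where $\pi_{A_i}:c_0(J_i)\to c_0(A_i)$ is the restriction and $T|_A=T\circ(\iota_{A_1},\dots,\iota_{A_n})\in\U(c_0(A_1),\dots,c_0(A_n);F)$ satisfies $\|T|_A\|_{\U}\le\|T\|_{\U}$ by the ideal inequality, I use that $c_0(A_i)\overset{1}{=}c_0$ to invoke the hypothesis: $T|_A$ admits a representative $u\in\ell_1(A_1)\widetilde\otimes\dots\widetilde\otimes\ell_1(A_n)\widetilde\otimes F$ with $\alpha(u)\le\|T\|_{\U}+\varepsilon$. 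Because $\ell_1(A_i)\hookrightarrow\ell_1(J_i)$ is $1$-complemented, the induced map on tensor products is isometric (metric mapping property), so $u$ transports to $w$ with the same $\alpha$-norm, and the explicit rule of the representation theorem gives $\varrho_J(w)=T|_A\circ(\pi_{A_1},\dots,\pi_{A_n})=T$ by the Main Lemma. Thus $\inf\{\alpha(w):\varrho_J(w)=T\}\le\|T\|_{\U}$, and since $\varrho_J$ is a norm-one inclusion the reverse inequality is automatic; this is precisely the $F$-RNp.

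For the Main Lemma I would reduce to checking that, for each $i$, the set $\Pi_i$ of indices $\gamma\in J_i$ occurring in some nonzero value $T(e_{\beta_1},\dots,e_{\beta_n})$ is countable (then $A_i:=\Pi_i$ works, by multilinearity, continuity and density of finitely supported vectors). Both hypotheses are fed into the same argument through weak sequential continuity: if $\U\subseteq\lg_{wsc}$ this is immediate, while if $F$ contains no copy of $c_0$ I would invoke that every multilinear operator out of the $c_0(J_i)$ with range not containing $c_0$ is weakly sequentially continuous (this rests on the Dunford--Pettis property of $c_0$ together with the automatic weak compactness of such operators). Assuming then $T$ weakly sequentially continuous, suppose $\Pi_n$ is uncountable; choosing one witnessing basis-vector tuple per index and refining to a norm lower bound $\|v_{\gamma_k}\|\ge\delta$ along countably many distinct $\gamma_k$, a Ramsey/pigeonhole extraction makes each coordinate sequence $(\beta_j^{\gamma_k})_k$ either constant or injective. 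Since distinct unit vectors are weakly null in $c_0(J_j)$, all coordinate sequences converge weakly and the last one converges to $0$; simultaneous weak sequential continuity then forces $v_{\gamma_k}\to T(\dots,0)=0$, contradicting $\|v_{\gamma_k}\|\ge\delta$. Hence each $\Pi_i$ is countable.

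The main obstacle is the \emph{uniformity} in the Main Lemma: for a fixed choice of the remaining variables one gets countable support by a one-variable Bessaga--Pe\l czy\'nski argument (a weakly unconditionally Cauchy series in a space with no copy of $c_0$ converges unconditionally, hence has countable support), but producing a single countable set valid for all inputs is the delicate point, since no fixed tuple can witness uncountably many indices and several variables are genuinely ``active'' at once. My way around this is exactly the reduction to basis-vector witnesses followed by the Ramsey extraction above, which turns the coupling of the active variables into a single weak limit and sidesteps the need for a direct multilinear ``coupled-diagonal'' estimate. The only external inputs are the Dunford--Pettis property of $c_0$ (for the no-$c_0$ branch) and the elementary fact that distinct unit vectors are weakly null in $c_0(J)$.
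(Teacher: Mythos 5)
Your proof is correct and takes essentially the same route as the paper's: you establish that the set of basis indices where $T$ is nonzero is countable via weak sequential continuity (in the no-$c_0$ branch through the vector-valued Littlewood--Bogdanowicz--Pe{\l}czy\'nski result, which is exactly the paper's Proposition~\ref{bogda}, also derivable from Gonz\'alez--Guti\'errez as you note), then restrict $T$ to the resulting countable $c_0$-copies, apply the hypothesis there, and transport the representative back through the tensorized $1$-complemented embedding $\ell_1(A_i)\hookrightarrow\ell_1(J_i)$, which is isometric by the metric mapping property. This matches the paper's argument (including its commutative diagram with $\xi_k'\otimes\dots\otimes Id_F$) step for step, with your constant-or-injective subsequence extraction merely making explicit what the paper compresses into ``passing to subsequences we can assume $e_{j_i^k}$ is weakly null.''
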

Roughly speaking, if $\U(c_0,\dots,c_0;F)$ coincides with $\U^{min}(c_0,\dots,c_0;F)$, the previous statement allows us to conclude the existence of a quotient mapping over larger $c_0$-spaces (i.e., $\U(c_0(J_1),\dots,c_0(J_n);F)=\U^{min}(c_0(J_1),\dots,c_0(J_n);F)$
for all $J_1,\dots,J_n$ index sets). We follow some ideas from \cite[Proposition 3.2]{CarGal11}. We need a couple of results first in order to give a proof.

A folklore result known as the Littlewood-Bogdanowicz-Pe{\l}czy{\'n}ski theorem (see \cite{Lit30,Bog57,Pel57} and also \cite{Di99}) states that every scalar multilinear operator $T: c_0 \times \dots\times c_0 \to \mathbb{K}$ is approximable. We present here a vector-valued version of this theorem.
More precisely, we show that, if $F$ contains no copy of $c_0$,  then $\lg(^nc_0;F)=\lg_{app}(^nc_0;F)$. We stress that this can be derived from \cite{GonGu94}. However, it is not presented in this manner and follows from a more general result. Indeed, if $F$ contains no copy of $c_0$ then every operator $S: c_0 \to F$ is weakly compact (see \cite[Theorem 2.4.10]{AlbKal06}), therefore the result follows from \cite[Theorem 6]{GonGu94} (since $c_0$ has the Dunford-Pettis property).
We give an independent proof based on elemental properties of $c_0$.

Recall that a (formal) series $\sum_{j \in \N} x_j$ in a Banach space $E$ is \textit{weakly unconditionally Cauchy} if for every $\varphi\in E'$, $\sum_{j \in \N}|\varphi(x_j)|<\infty$.

\begin{proposition}\label{bogda}
  $\lg(^nc_0;F)=\lg_{app}(^nc_0;F)$ if and only if $F$ contains no copy of $c_0$.
\end{proposition}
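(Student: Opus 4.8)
The plan is to prove the two implications separately, with the forward direction being essentially straightforward and the reverse direction carrying the real content.

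\medskip

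\textbf{The easy direction.} If $F$ contains a copy of $c_0$, then I want to exhibit a non-approximable multilinear operator. Fix an isomorphic embedding $c_0 \hookrightarrow F$, and consider the $n$-linear map $T : c_0 \times \dots \times c_0 \to F$ given (up to the embedding) by $T(x^1,\dots,x^n) = \sum_{j} x^1_j \cdots x^n_j \, e_j$, where $(e_j)$ is the canonical basis of $c_0$. This is continuous (the coordinatewise product of null sequences is null with controlled sup-norm), but it is not approximable: if it were a limit of finite-type operators, testing on the basis vectors $e_k$ would force the ``diagonal'' to be finitely supported, a contradiction. Equivalently, one sees directly that $T$ is not weakly sequentially continuous / not approximable by evaluating at tails. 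Thus $\lg(^nc_0;F) \neq \lg_{app}(^nc_0;F)$, which is the contrapositive of the ``only if''.

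\medskip

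\textbf{The hard direction.} Assume $F$ contains no copy of $c_0$; I must show every continuous $T \in \lg(^nc_0;F)$ is approximable. The strategy is induction on $n$, reducing the $n$-linear problem to the operator case via partial evaluation. The base case $n=1$ is exactly the statement that every $S \in \mathcal{L}(c_0;F)$ is approximable when $F$ omits $c_0$: here one uses that $S$ is determined by the weakly unconditionally Cauchy series $\sum_j S(e_j)$, and that $c_0$-avoidance forces $\sum_j S(e_j)$ to be \emph{unconditionally convergent} (Bessaga--Pe\l czy\'nski), whence the partial-sum operators $S_N(x) = \sum_{j \le N} x_j \, S(e_j)$, which are finite type, converge to $S$ in operator norm. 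For the inductive step, I freeze the last variable: for a fixed $x^n$, the map $(x^1,\dots,x^{n-1}) \mapsto T(x^1,\dots,x^{n-1},x^n)$ is $(n-1)$-linear into $F$, and conversely $T$ is encoded by the operator $x^n \mapsto T(\cdot,\dots,\cdot,x^n)$ into the space $\lg(^{n-1}c_0;F)$. The key point is that by the inductive hypothesis this target space equals $\lg_{app}(^{n-1}c_0;F)$, and I would then argue this approximable space again contains no copy of $c_0$, so the base case applied to the operator $c_0 \to \lg_{app}(^{n-1}c_0;F)$ yields approximability; unwinding the finite-type approximations in both coordinates gives a finite-type approximation of $T$ itself.

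\medskip

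\textbf{Main obstacle.} The delicate step is establishing, at each stage, that the relevant coefficient series is \emph{unconditionally} (not merely weakly unconditionally) convergent, so that the natural finite-rank truncations converge in the \emph{operator} norm rather than just pointwise. This is exactly where the hypothesis that $F$ omits $c_0$ enters, through the Bessaga--Pe\l czy\'nski dichotomy; I expect the bulk of the work to lie in verifying that the truncation operators (obtained by cutting off the $c_0$-index in each variable simultaneously) are uniformly close to $T$ on the unit ball, controlling the multilinear ``cross terms'' that appear when one truncates several variables at once. A clean way to organize this is to prove directly, without explicit induction, that the multi-indexed family $\big(T(e_{j_1},\dots,e_{j_n})\big)$ generates an unconditionally convergent multilinear expansion and that the square/rectangular partial sums converge uniformly on $B_{c_0}^n$; the $c_0$-avoidance of $F$ is precisely what upgrades the weak unconditional Cauchy condition (which holds for any continuous $T$) to genuine norm convergence.
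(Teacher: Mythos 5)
Your ``only if'' direction is essentially correct (your diagonal operator works for the same reason as the paper's example: finite type operators kill the coordinates $e_k$ as $k\to\infty$, while the diagonal does not), and your $n=1$ base case is sound: $c_0$-avoidance plus Bessaga--Pe{\l}czy\'nski makes $\sum_j S(e_j)$ unconditionally convergent, and the bounded multiplier test then gives norm convergence of the truncations. The genuine gap is in the inductive step, at the point where you say you ``would then argue'' that $\lg_{app}(^{n-1}c_0;F)$ again contains no copy of $c_0$. This is not a formality, and it is exactly where the content of the proposition lives. Heredity of $c_0$-avoidance by operator and injective-tensor constructions is delicate and can fail: $\mathcal{K}(\ell_2;\ell_2)=\ell_2\widetilde\otimes_\varepsilon\ell_2$ contains an isometric copy of $c_0$ (the diagonal operators) even though $\ell_2$ contains none. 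Your target space is $\lg_{app}(^{n-1}c_0;F)\cong\ell_1\widetilde\otimes_\varepsilon\dots\widetilde\otimes_\varepsilon\ell_1\widetilde\otimes_\varepsilon F$, and while the claim may well be true there, proving it requires a block-sequence analysis of essentially the same depth as the statement you are inducting on; as written, the induction is circular in spirit and incomplete in substance.

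Your fallback suggestion --- proving directly that the rectangular partial sums of the monomial expansion converge uniformly on $B_{c_0}^n$ --- has the same problem: Bessaga--Pe{\l}czy\'nski upgrades one weakly unconditionally Cauchy series at a time, but gives no uniformity over the uncountable family of series indexed by points of the ball, and absolute summability of the coefficients $T(e_{j_1},\dots,e_{j_n})$ fails already for scalar bilinear forms on $c_0$ (Littlewood's $4/3$ exponent is optimal), so the tails cannot be estimated coefficientwise. The paper supplies the missing uniformity by a different route: it first shows every $T\in\lg(^nc_0;F)$ is weakly sequentially continuous at the origin, via a block-basis construction combined with Zalduendo's estimate $\sum_j|\varphi(T(v_j^1,\dots,v_j^n))|\le\|\varphi\circ T\|$ for scalar forms on $c_0$ (this is where $c_0$-avoidance converts a weakly unconditionally Cauchy series into a norm-null sequence of values); it then passes from weak sequential continuity to weak \emph{uniform} continuity on bounded sets by the Aron--Herv\'es--Valdivia result (using that $c_0$ contains no copy of $\ell_1$), and finally to approximability by Aron--Prolla (using the approximation property of $\ell_1$). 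Those two cited results are precisely the equicontinuity mechanism your sketch lacks; without them, or a proof of your $c_0$-heredity claim, the hard direction does not close.
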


\begin{proof}
If $F=c_0$ it is easy to see that there exists $T\in\lg(^nc_0;c_0)$ but not in $\lg_{app}(^nc_0;c_0)$. Indeed, take $T(x_1,\dots,x_n)= (x_1(1)\cdots x_{n-1}(1)\cdot x_n(j))_{j\in\N}.$  If $T$ is approximable, so is $T (e_1,\dots,e_1,\cdot)$, but $T(e_1,\dots,e_1,\cdot)=Id_{c_0}(\cdot)$, which is a contradiction.
The case where $F$ contains  copy of $c_0$ easily follows from this.

Conversely, suppose $F$ contains no copy of $c_0$. We show first that every  $T \in\lg(^nc_0;F)$ is weakly sequentially continuous at the origin.
Suppose there exists $T\in\lg(^nc_0;F)$ such that $T$ is not weakly sequentially continuous at the origin. By using the norm continuity of $T$, the basis of $c_0$ and by taking subsequences, if necessary, we can construct sequences $(u_j^1)_j,\dots (u_j^n)_j\subset c_0$  and a strictly increasing sequence of non-negative integers $(k_j)_j$ such that
\begin{equation}\label{mayoradelta}
  \|T(u_j^1,\dots,u_j^n)\|\geq\delta,
\end{equation}
 for some $\delta>0$, with
$$u_j^i=\sum_{l=1}^{k_{j+1}}x_{l,j}^ie_l,\ \ \ \ \ \    \|\sum_{l=1}^{k_j}x_{l,j}^ie_l\|\leq \frac{1}{2^j} \ \ \ \ \ \text{and} \ \ \ \ \ \|\sum_{l=k_j+1}^{k_{j+1}}x_{l,j}^ie_l\|=1,$$
for all $1\leq i\leq n$ and $j\in\N$. Take $v_j^i=\sum_{l=k_j+1}^{k_{j+1}}x_{l,j}^ie_l$ then, by the Bessaga-Pe{\l}czy{\'n}ski principle~\cite[Proposition 1.3.10]{AlbKal06}, $(v_j^i)_j$ is a block basis of a subspace of $c_0$ equivalent to the canonical basis of $c_0$ for each $1\leq i\leq n$.  Using \cite[Proposition 2]{Zal93} we obtain that
$\sum_{j\in\N}|\varphi(T(v_j^1,\dots,v_j^n))|\leq \|\varphi\circ T\|<\infty$ for all $\varphi\in F'$, since $\varphi\circ T\in\lg(^nc_0)$.
Thus, the formal series $\sum_{j\in\N}T(v_j^1,\dots,v_j^n)$ is weakly unconditionally Cauchy. Since $F$ contains no copy of $c_0$, this implies that the series $\sum_{j\in\N}T(v_j^1,\dots,v_j^n)$ is unconditionally convergent \cite[Theorem 2.4.11]{AlbKal06}, which yields that $\dps{\lim_{j \to \infty} T(v_j^1,\dots,v_j^n) = 0.}$

Now,
\begin{eqnarray*}
 \|T(u_j^1,\dots,u_j^n)\| & \leq & \|T(v_j^1,\dots,v_j^n)\| + \|T\|\Big(\sum_{r=1}^n \binom n r \frac{1}{2^{rj}}\Big).
 \end{eqnarray*}
Since both terms tend to zero as $j$ tends to infinity, this contradicts \eqref{mayoradelta}. Therefore, we have proved that every $n$-linear operator in $\lg(^nc_0;F)$ is weakly sequentially continuous at the origin.

Using that every $k$-linear operator in $\lg(^kc_0;F)$ is weakly sequentially continuous at the origin for every $1\leq k\leq n$, a standard argument shows that every $n$-linear operator in $\lg(^nc_0;F)$ is in fact weakly sequentially continuous.

To finish the proof it remains to see
    that every weakly sequentially continuous  $T : c_0 \times \dots \times c_0 \to F$  is approximable (i.e., can be uniformly approximated by
    finite type multilinear operators). Since $c_0$ contains no copy of $\ell_1$, by a multilinear version of \cite[Proposition 2.12]{ARHERVAL83},  we know that every weakly sequentially continuous multilinear mapping $T : c_0 \times \dots \times c_0 \to F$ is  weakly uniformly continuous on bounded sets  (i.e., $\mathcal{L}_{wsc}(^n c_0,F) = \mathcal{L}_{w}(^n c_0,F)$). It is important to remark that \cite[Proposition 2.12]{ARHERVAL83} is a result on polynomials, but follows analogously in the multilinear frame.
    On the other hand, since $c_0'=\ell_1$ has the approximation property, then $\mathcal{L}_{w}(^n c_0,F) = \mathcal{L}_{app}(^n
    c_0,F)$. This result can be found in \cite[Proposition 2.7]{ArPro80}, but again in the context of polynomial mappings.
\end{proof}

From Proposition \ref{bogda} it follows that if $F$ contains no copy of
$c_0$, then every continuous homogeneous polynomial from $c_0$ to $F$ is approximable. This provides new examples of vector-valued $\pi_1$-holomorphy types,
a class of polynomials introduced in \cite{FaJa09} and successfully explored in \cite{BeBoFaJa13}.

We are now able to prove Proposition \ref{teo cociente}.

\begin{proof}(of Proposition \ref{teo cociente}).
Let $T\in\U(c_0(J_1),\dots,c_0(J_n);F)$ and let $L=\{(j_1,\dots,j_n) : T(e_{j_1},\dots,e_{j_n})\neq0 \}$. Note that $L$ is a countable set. If not, there exist $(j_1^k,\dots,j_n^k)_{k\in\N}$ different indexes such that $|T(e_{j_1^k},\dots,e_{j_n^k})|>\varepsilon $, for some $\varepsilon>0$. Without loss of generality we can assume that the sequence of first coordinates $j_1^k$ has all its
elements pairwise different. Passing to subsequences we can also assume that $e_{j_i^k}$ is weakly null. If $F$ contains no copy of $c_0$ we can use the vector-valued Littlewood-Bogdanowicz-Pe{\l}czy{\'n}ski property of $c_0$, Proposition~\ref{bogda}, to obtain a contradiction. If $\U\subseteq\lg_{wsc}$, the claim follows immediately.

Let $\Omega_k: J_1 \times \cdots \times J_n \longrightarrow J_k$  be given by $(j_1, \dots, j_n) \mapsto j_k$, and set $L_k:= \Omega_k(L) \subset J_k$.
Consider the mapping  $\xi_k : c_0(J_k) \to c_0(L_k)$ given by
$$ (a_j)_{j \in J_k} \mapsto (a_j)_{j \in L_k}.$$
We also have the inclusion $\imath_k: c_0(L_k) \to c_0(J_k)$ defined by
$$ (a_j)_{j \in L_k} \mapsto (b_j)_{j \in J_k},$$
where $b_j$ is $a_j$ if $j \in L_k$ and zero otherwise. Note that, if we consider $\overline{T}:= T \circ (\imath_1, \dots, \imath_n)$, then $\overline T\in\U(c_0(L_1),\dots,c_0(L_n);F)$. Moreover,  $\overline T \circ(\xi_1, \dots, \xi_n)=T$ and $\|T\|_\U=\|\overline T\|_\U$. Finally, since the following diagram
$$ \xymatrix{
(\ell_1(L_1) \widetilde\otimes \dots \widetilde\otimes \ell_1(L_n) \widetilde\otimes F,\alpha) \ar@{->>}[rr]^1 \ar[d]_{\xi_1' \otimes \dots \otimes \xi_n'\otimes Id_{F}}
& & \U(c_0(L_1),\dots,c_0(L_n);F) \ar[d], & S \ar@{~>}[d] \\
(\ell_1(J_1) \widetilde\otimes \dots \widetilde\otimes \ell_1(J_n) \widetilde\otimes F,\alpha)\ar[rr] & & \U(c_0(J_1),\dots,c_0(J_n);F) & S\circ (\xi_1 ,\dots, \xi_n)},$$
is commutative, and the mapping $(\xi_1' \otimes \dots \otimes \xi_n'\otimes Id_{F})$ is an isometry because $\ell_1(L_k)$ is a 1-complemented subspace of $\ell_1(J_k)$ (via $\imath_k'$),  we get what we want.
\end{proof}

We have seen a result that permits us to know if a given ideal enjoys the vector-valued Radon-Nikodým property. To state our coincidence result for ideals of multilinear operators we first recall and give some definitions. A Banach space $E$ is an \textit{Asplund space} if every separable subspace of $E$ has separable dual. In particular, reflexive spaces and spaces that have separable duals (e.g., $c_0$) are Asplund. For more equivalences of the Asplund property and related topics see \cite{DieUhl77}.

For $1\leq k\leq n$, we define a canonical mapping, called the \textit{$k$-Arens extension of $T$},
\[Ext_k:\lg(E_1,\dots,E_n;F)\to \lg(E_1,\dots,E_{k-1},E_k'',E_{k+1},\dots,E_n;F''),\]
in the following way:
given $T\in\lg(E_1,\dots,E_n;F)$, consider $\overleftarrow{(J_F\circ T)}$ the $(n+1)$-linear form on $E_1\times\dots\times E_n\times F'$ given by $\overleftarrow{(J_F\circ T)}(x_1,\dots,x_n,y')=(J_F\circ T)(x_1,\dots,x_n)(y')$, now,
\[Ext_k(T)(x_1,\dots,x_k'',\dots,x_n)(y'):=x_k''\big (z\mapsto \overleftarrow{(J_F\circ T)}(x_1,\dots,x_{k-1},z , x_{k+1},\dots,x_n,y')\big ).\]

We say that $\U$ is an \textit{$F$-Arens stable ideal} if the mapping $$Ext_k:\U(E_1,\dots,E_n;F)\to \U(E_1,\dots,E_{k-1},E_k'',E_{k+1},\dots,E_n;F)$$ is well defined and results an isometry for all $1\leq k\leq n$. Note that the condition above says that the range of every Arens extension remains on $F$.
If $\U$ is $F$-Arens stable for every $F$, we just say that $\U$ is an \emph{Arens stable ideal}.
It is important to mention that  maximal ideals of multilinear operators are $F'$-Arens stable for every dual space $F'$ (see for example \cite[Extension Lemma 13.2]{DefFlo93}).

It is time to state our Lewis type theorem: a coincidence result for vector-valued multilinear operators.

\begin{theorem}\label{Lewis theorem}
Let $E_1, \dots, E_n$ be Asplund spaces. If $\U\sim\alpha$ is an $F$-Arens stable extendible ideal  with the $F$-RNp  then,
\begin{equation}\label{quotient}
(E_1' \widetilde\otimes \dots \widetilde\otimes E_n' \widetilde\otimes F, \alpha) \overset 1 \twoheadrightarrow \U(E_1,\dots,E_n;F).
\end{equation}
In particular, $ \U^{min}(E_1,\dots,E_n;F)\overset 1  = \U(E_1,\dots,E_n;F)$.
\end{theorem}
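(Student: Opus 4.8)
The plan is to prove the quotient statement \eqref{quotient}; the ``in particular'' is then immediate, because by Theorem \ref{representation theorem minimal} the map $\varrho$ factors as
$
(E_1'\widetilde\otimes\dots\widetilde\otimes F;\alpha)\overset{1}{\twoheadrightarrow}\U^{min}(E_1,\dots,E_n;F)\overset{\le1}{\hookrightarrow}\U(E_1,\dots,E_n;F),
$
so if the composite is a metric surjection the inclusion is forced to be a surjective isometry and hence $\U^{min}\overset1=\U$. Thus it suffices to show: for every $T\in\U(E_1,\dots,E_n;F)$ and every $\varepsilon>0$ there is $z\in(E_1'\widetilde\otimes\dots\widetilde\otimes F;\alpha)$ with $\varrho(z)=T$ and $\alpha(z)\le(1+\varepsilon)\|T\|_\U$.

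The first step is to manufacture the $c_0$-spaces that let us invoke the $F$-RNp. Put $I_k:=B_{E_k'}$ and recall the canonical metric surjections $Q_k\colon \ell_1(I_k)=c_0(I_k)'\overset1{\twoheadrightarrow}E_k'$, whose adjoints $R_k:=Q_k'\colon E_k''\overset1{\hookrightarrow}\ell_\infty(I_k)=c_0(I_k)''$ restrict on $E_k$ to the canonical isometric embedding $\iota_k\colon E_k\hookrightarrow c_0(I_k)''$. Using extendibility I extend $T$ to an operator $\bar T\in\U(c_0(I_1)'',\dots,c_0(I_n)'';F)$ with $\|\bar T\|_\U=\|T\|_\U$ (the $c_0(I_k)''$ are superspaces of $\iota_k(E_k)\cong E_k$), and I set $T_0:=\bar T\circ(\kappa_1,\dots,\kappa_n)$, where $\kappa_k\colon c_0(I_k)\hookrightarrow c_0(I_k)''$ is the inclusion. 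Then $T_0\in\U(c_0(I_1),\dots,c_0(I_n);F)$ and $\|T_0\|_\U\le\|T\|_\U$. Now the $F$-RNp of Definition \ref{def RNp} furnishes $z_0\in(\ell_1(I_1)\widetilde\otimes\dots\widetilde\otimes F;\alpha)$ with $\varrho_{c_0}(z_0)=T_0$ and $\alpha(z_0)\le(1+\varepsilon)\|T_0\|_\U\le(1+\varepsilon)\|T\|_\U$, where $\varrho_{c_0}$ is the representation map on the $c_0$-spaces. I transport it by $z:=(Q_1\otimes\dots\otimes Q_n\otimes\id_F)(z_0)$; since each $\|Q_k\|=1$, the metric mapping property of $\alpha$ gives $\alpha(z)\le\alpha(z_0)\le(1+\varepsilon)\|T\|_\U$, which is exactly the required norm bound.

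The heart of the argument, and what I expect to be the main obstacle, is the identity $\varrho(z)=T$. Writing $z_0=\sum_j\varphi_1^j\otimes\dots\otimes\varphi_n^j\otimes f_j$, the relation $\langle Q_k\varphi,x\rangle=\langle\iota_k x,\varphi\rangle$ (which is just $Q_k'|_{E_k}=\iota_k$) yields, by a direct computation, $\varrho(z)(x_1,\dots,x_n)=\widehat{T_0}(\iota_1x_1,\dots,\iota_nx_n)$, where $\widehat{T_0}(u_1,\dots,u_n):=\sum_j\langle u_1,\varphi_1^j\rangle\cdots\langle u_n,\varphi_n^j\rangle f_j$ is the separately weak$^*$-continuous tensor extension of $T_0$ to the biduals, i.e.\ the iterated Arens extension $Ext(T_0)$. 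On the other hand $T=\bar T(\iota_1\cdot,\dots,\iota_n\cdot)$. Hence everything reduces to showing that the canonical Arens extension $\widehat{T_0}$ and the chosen extendible extension $\bar T$ agree on $\iota_1(E_1)\times\dots\times\iota_n(E_n)$. Both extend $T_0$ from $c_0(I_1)\times\dots\times c_0(I_n)$, but $\bar T$ is only norm-continuous while $\widehat{T_0}$ is weak$^*$-continuous, so this reconciliation is not automatic; it is precisely here that the hypotheses must interlock.

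To close this gap I would exploit the $F$-Arens stability: it guarantees that $Ext(T_0)=\widehat{T_0}$ stays $F$-valued with $\|\widehat{T_0}\|_\U=\|T_0\|_\U$, and, together with the weak$^*$-density of $c_0(I_k)$ in $c_0(I_k)''$ (Goldstine) and the weak$^*$-continuity of the Arens extension, it should pin down the two extensions on the image $\iota_k(E_k)$. The \emph{Asplund} hypothesis on $E_1,\dots,E_n$ is what makes this whole $c_0$-reduction legitimate: it permits passing to separable subspaces with separable duals, hence to countable index sets, where the coincidence on $c_0$-spaces encoded by the $F$-RNp (and propagated to arbitrary index sets by Proposition \ref{teo cociente}) is available and transports faithfully under $Q_1\otimes\dots\otimes Q_n\otimes\id_F$. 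I anticipate that controlling this final identification, so that the arbitrariness of the extendible extension $\bar T$ does not spoil $\varrho(z)=T$, is the genuinely delicate point and the main technical advance over the scalar treatment of \cite{CarGal11}.
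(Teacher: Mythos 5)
Your reduction to lifting $T$ with norm control is fine, and the norm estimate $\alpha(z)\le(1+\varepsilon)\|T\|_{\U}$ is correct, but the step you yourself flag as delicate --- the identity $\varrho(z)=T$, i.e.\ that the iterated Arens extension of $T_0=\bar T\circ(\kappa_1,\dots,\kappa_n)$ agrees with $\bar T$ on $\iota_1(E_1)\times\dots\times\iota_n(E_n)$ --- is not merely unproved: it is false for an arbitrary extendible extension $\bar T$, and none of the tools you invoke ($F$-Arens stability, Goldstine, weak$^*$-continuity of the Arens extension) can repair it, because $\bar T$ carries no weak$^*$-continuity and its restriction to the $c_0$'s can lose all information about $T$. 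A concrete failure already occurs for $n=1$, $\U=\lg$, $F=\K$ (all hypotheses of the theorem hold: $\lg\sim\varepsilon$, Hahn--Banach gives extendibility, and $(\ell_1(J)\widetilde\otimes\K,\varepsilon)\overset 1 = c_0(J)'$ gives the $\K$-RNp). Take any nonzero $\varphi\in E'$. Since every $a\in c_0(B_{E'})$ has all but finitely many coordinates of modulus $<\delta$, while the coordinates $\psi(x)$ of $\iota(x)$ have modulus $\ge(1-\eta)\|x\|$ on the uncountable set $\{t\psi_0: 1-\eta\le t\le 1\}$ (with $\psi_0$ almost norming $x$), one gets $\mathrm{dist}\bigl(\iota(x),c_0(B_{E'})\bigr)=\|x\|$; hence the functional $\iota(x)+a\mapsto\varphi(x)$ on $\iota(E)+c_0(B_{E'})$ has norm $\le\|\varphi\|$ and extends by Hahn--Banach to $\bar T\in\ell_\infty(B_{E'})'$ with $\|\bar T\|=\|\varphi\|$, extending $T=\varphi$ \emph{and vanishing identically on} $c_0(B_{E'})$. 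Then $T_0=0$, so your $z_0$, $z$ and $\varrho(z)$ are all $0\neq T$. The Arens extension of $T_0$ is $0$ while $\bar T$ is not, so the two extensions of $T_0$ genuinely disagree on $\iota(E)$; your strategy of extending in all variables at once and then restricting naively to the $c_0$'s cannot be salvaged by any stability hypothesis on the ideal.

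The paper closes exactly this gap by a different mechanism, and this is also where your account of the Asplund hypothesis goes astray. In Proposition \ref{cociente} one variable is treated at a time: $T$ is extended (in the $k$-th variable only) to $\widetilde T$ on $\ell_\infty(B_{E_k'})$, and then --- this is the key point --- the Asplund property of $E_k$ is used through the Radon--Nikod\'ym property of $E_k'$ and the Lewis--Stegall theorem \cite[33.1]{DefFlo93} to factor $I_{E_k}'=Q_{E_k'}\circ S$ with $S:\ell_\infty(B_{E_k'})'\to\ell_1(B_{E_k'})$, $\|S\|\le 1+\varepsilon$. The operator fed to the $F$-RNp is not the naive restriction of $\widetilde T$ but the corrected operator $B(a,x_2,\dots,x_n)=Ext_k(\widetilde T)(S'J_{c_0(B_{E_k'})}(a),x_2,\dots,x_n)$, and the algebraic identity $Q_{E_k'}S=I_{E_k}'$ is precisely what forces $\Psi_k(B)=T$, replacing the reconciliation of extensions that your argument cannot supply; the descent then proceeds square by square through the commutative diagram of Figure \ref{diagrama}, with extendibility used both to produce $\widetilde T$ and to know that $\alpha$ is projective in the first $n$ coordinates (so the tensorized quotients on the left are metric surjections). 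In particular, Asplundness does \emph{not} enter via separable reduction to countable index sets: Definition \ref{def RNp} already quantifies over arbitrary index sets, and Proposition \ref{teo cociente} (your ``propagation'' step) is a criterion for verifying the $F$-RNp in examples, not an ingredient of the proof of Theorem \ref{Lewis theorem}.
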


We need some previous results in order to give a proof of this theorem. We follow the lines of \cite{CarGal11} and include all the main steps for completeness.
We define for a given Banach space $E$ two natural mappings: $I_{E}: E\rightarrow \ell_{\infty}(B_{E'})$ and $Q_{E}:\ell_1(B_{E})\to E$, the canonical inclusion and the canonical quotient mapping respectively.

Let $\U$ be an $F$-Arens stable ideal of multilinear operators. We consider for each $1\leq k\leq n$ a natural operator $\Psi_k:\U(E_1,\dots,E_{k-1},c_0(B_{E_k'}),E_{k+1},\dots, E_n;F)\to\U(E_1,\dots,E_n;F)$ given by \[\Psi_k(T):=Ext_k(T)\circ (Id_{E_1},\dots,Id_{E_{k-1}},I_{E_k},Id_{E_{k+1}},\dots,Id_{E_n}).\]

\begin{remark}\label{diagrama conmutativo}
If $\U\sim\alpha$ is $F$-Arens stable, then we have the following commutative diagram

\xymatrix{ \big( (\widetilde\otimes_{j=1}^{k-1}E_j')  \widetilde{\otimes}  \ell_1(B_{E_k'}) \widetilde{\otimes} (\widetilde\otimes_{j=k+1}^{n}E_j')\widetilde\otimes F, \alpha \big) \ar[r] \ar[d]^{(\otimes_{j=1}^{k-1}Id_{E_j'})  \otimes  Q_{E_k'} \otimes (\otimes_{j=k+1}^{n}Id_{E_j'})\otimes Id_{F}}
& \U(E_1\dots,E_{k-1},c_0(B_{E_k'}),E_{k+1},\dots,E_n;F) \ar[d]^{\Psi_k}   \\
\big( (\widetilde\otimes_{j=1}^{k-1}E_j')  \widetilde{\otimes}  E_k' \widetilde{\otimes} (\widetilde\otimes_{j=k+1}^{n}E_j')\widetilde\otimes F, \alpha  \big) \ar[r]
& \U(E_1\dots,E_{k-1},E_k,E_{k+1},\dots,E_n;F).}
\smallskip

\end{remark}
The following proposition is crucial for our purposes.

\begin{proposition} \label{cociente}
Let $E_1, \dots, E_n, F$ be Banach spaces and $\U$ be an $F$-Arens stable extendible ideal. If $E_k$ is Asplund then $\Psi_k$ is a metric surjection.
\end{proposition}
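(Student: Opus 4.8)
The plan is to split $\Psi_k$ into two maps whose metric behaviour is transparent, and thereby isolate the one place where Asplundness of $E_k$ is really used. Write $\Psi_k = R_{I_{E_k}}\circ Ext_k$, where $Ext_k\colon \U(\dots,c_0(B_{E_k'}),\dots;F)\to \U(\dots,\ell_\infty(B_{E_k'}),\dots;F)$ is the $k$-th Arens extension (recall $c_0(B_{E_k'})''\overset{1}{=}\ell_\infty(B_{E_k'})$) and $R_{I_{E_k}}(U):=U\circ(Id,\dots,I_{E_k},\dots,Id)$ is restriction along the canonical isometry $I_{E_k}\colon E_k\to \ell_\infty(B_{E_k'})$. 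First I would record the two cheap facts. Since $\U$ is $F$-Arens stable, $Ext_k$ is an isometry, and its range is exactly the set of operators that are $w^{*}$-to-weakly continuous in the $k$-th variable: indeed such a $U$ is the unique $w^{*}$-weakly continuous extension of $U\circ(Id,\dots,j,\dots,Id)$, so $U=Ext_k(U|_{c_0})$. Since $\U$ is extendible and $I_{E_k}$ is an isometric embedding, $R_{I_{E_k}}$ is a metric surjection: every $S\in\U(\dots,E_k,\dots;F)$ extends to $\ell_\infty(B_{E_k'})\supseteq I_{E_k}(E_k)$ with the same $\U$-norm. Combined with the metric mapping property and $\|I_{E_k}\|=1$, this already yields $\|\Psi_k\|\le 1$.

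With this factorization the proposition reduces to a single lifting problem. Namely, $\Psi_k$ is a metric surjection if and only if every $S$ admits, for each $\varepsilon>0$, an extension to $\ell_\infty(B_{E_k'})$ of $\U$-norm at most $(1+\varepsilon)\|S\|_{\U}$ that lies in the range of $Ext_k$, i.e. is $w^{*}$-continuous in the $k$-th variable. Using $I_{E_k}=Q_{E_k'}'\circ J_{E_k}$ together with $F$-Arens stability, I would phrase this as: produce a $w^{*}$-continuous $U$ with $U\circ Q_{E_k'}'=Ext_k(S)$, where $Ext_k(S)\in\U(\dots,E_k'',\dots;F)$ is the isometric (and $w^{*}$-continuous) Arens extension of $S$, and $Q_{E_k'}'\colon E_k''\to \ell_\infty(B_{E_k'})$ is the $w^{*}$-$w^{*}$ continuous isometric embedding dual to the canonical quotient $Q_{E_k'}\colon \ell_1(B_{E_k'})\twoheadrightarrow E_k'$. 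Such a $U$ solves the problem, since then $T:=U\circ(Id,\dots,j,\dots,Id)$ with $j\colon c_0(B_{E_k'})\hookrightarrow \ell_\infty(B_{E_k'})$ satisfies $Ext_k(T)=U$ and $\Psi_k(T)=U\circ I_{E_k}=Ext_k(S)\circ J_{E_k}=S$, while $\|T\|_{\U}=\|U\|_{\U}\le(1+\varepsilon)\|S\|_{\U}$. Thus everything comes down to extending the $w^{*}$-continuous operator $Ext_k(S)$ along $Q_{E_k'}'$ to a $w^{*}$-continuous operator on all of $\ell_\infty(B_{E_k'})$, without increasing the norm.

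The construction of such a $w^{*}$-continuous, norm-controlled extension is where I expect the real difficulty, and where the hypothesis that $E_k$ is Asplund is indispensable. Extendibility alone gives an extension of $S$ to $\ell_\infty(B_{E_k'})$, but a generic one is only norm continuous, and its Arens extension need not agree with it on $I_{E_k}(E_k)$; the discrepancy is precisely the singular (non-$w^{*}$-continuous) part of the functionals $y'\circ U(\dots,\cdot,\dots)$, which lives in $c_0(B_{E_k'})^{\perp}$ and does not vanish on $I_{E_k}(E_k)$ in general. To force $w^{*}$-continuity I would use Asplundness locally: it is enough to build the extension on separable pieces and then assemble, and for a separable subspace $Y\subseteq E_k$ the Asplund property makes $Y'$ separable, so $B_{Y'}$ is $w^{*}$-metrizable and the extension can be realized as an honest $w^{*}$-sequential limit of finite type data — and finite type data lift \emph{exactly and isometrically} through $Q_{E_k'}$, being supported on finitely many points of $B_{E_k'}$. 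The finite generation of $\alpha$ and the extendibility of $\U$ then let me patch these local $w^{*}$-continuous liftings into a global $U$ with $\|U\|_{\U}\le(1+\varepsilon)\|S\|_{\U}$. The main obstacle is exactly this last step, namely upgrading a norm-bounded extension to a weak-$*$-continuous one while keeping the $\U$-norm under control; the rest is formal, and it is here that Asplundness (equivalently, the Radon--Nikod\'ym property of $E_k'$, or the weak-$*$ fragmentability of $B_{E_k'}$) cannot be dispensed with.
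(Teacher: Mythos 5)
Your reduction is sound as far as it goes: the factorization $\Psi_k = R_{I_{E_k}}\circ Ext_k$, the identity $I_{E_k}=Q_{E_k'}'\circ J_{E_k}$, and the observation that everything comes down to producing, for each $T$ and $\varepsilon>0$, an extension of $T$ to $\ell_\infty(B_{E_k'})$ of $\U$-norm at most $(1+\varepsilon)\|T\|_\U$ which is weak-$*$ continuous in the $k$-th slot --- all of this is correct and compatible with the computation in the paper. But your proposal stops exactly where the proof has to happen, and the sketch you offer for that step does not work as stated. The missing idea is the Lewis--Stegall theorem: since $E_k$ is Asplund, $E_k'$ has the Radon--Nikod\'ym property, and Lewis--Stegall \cite[33.1]{DefFlo93} factors the adjoint of the canonical inclusion, $I_{E_k}'\colon \ell_\infty(B_{E_k'})'\to E_k'$, as $I_{E_k}'=Q_{E_k'}\circ S$ with $\|S\|\le 1+\varepsilon$. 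This single factorization replaces your entire ``separable pieces'' program: one takes an extension $\widetilde T\in\U(\dots,\ell_\infty(B_{E_k'}),\dots;F)$ of $T$ with the same norm (extendibility), and defines the preimage \emph{directly on} $c_0(B_{E_k'})$ by $B(a,\dots):=Ext_k(\widetilde T)(S'J_{c_0(B_{E_k'})}(a),\dots)$; the ideal property and $F$-Arens stability give $B\in\U$ with $\|B\|_\U\le(1+\varepsilon)\|T\|_\U$, and a direct computation against functionals $y'\in F'$, using $I_{E_k}(x)(a)=Q_{E_k'}(a)(x)$, shows $\Psi_k(B)=T$. In particular one never needs your characterization of the range of $Ext_k$, nor a weak-$*$ continuous extension defined on all of $\ell_\infty(B_{E_k'})$.

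By contrast, the route you sketch --- realize the extension as a weak-$*$ sequential limit of finite type data on separable pieces, lift those through $Q_{E_k'}$, then ``patch'' --- has two unjustified steps that I do not see how to repair under the stated hypotheses. First, there is no mechanism for assembling local liftings, defined over separable subspaces $Y\subseteq E_k$, into a single operator with controlled $\U$-norm: such local-to-global passages require ultraproduct or maximality arguments, and $\U$ is not assumed maximal here (the proposition must apply to a general extendible ideal). Second, even on a separable piece, a pointwise weak-$*$ limit of finite type operators need not belong to $\U$, and $\|\cdot\|_\U$ is not lower semicontinuous with respect to such limits for a general ideal; finite generation of $\alpha$ does not supply this. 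So the Asplund hypothesis has to enter through the Radon--Nikod\'ym property of $E_k'$ via the Lewis--Stegall factorization (or an equivalent selection/factorization result), not through weak-$*$ metrizability of dual balls of separable subspaces.
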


\begin{proof} 
We prove it assuming that $k=1$ (the other cases are analogous). Notice that $\Psi_1$ has norm less than or equal to one (since $\U$ is $F$-Arens stable).

Fix $T \in\U(E_1,\dots,E_n;F) $ and $\varepsilon > 0$ and let $\widetilde{T} \in \U(\ell_\infty(B_{E_1'}),E_2,\dots,E_n;F)$ be an extension of $T$ with the same $\U$-norm.
Since $E_1'$ has the Radon-Nikodým property, by the Lewis-Stegall Theorem  \cite[33.1]{DefFlo93}, the adjoint of the canonical inclusion $I_{E_1}: E_1 \to \ell_{\infty}(B_{E_1'})$ factors through $\ell_1(B_{E_1'})$ via
\begin{equation*} \label{factorizacion}
\xymatrix{ {\ell_{\infty}(B_{E_1'})'}  \ar[rr]^{I_{E_1}'} \ar[rd]^{S} & & {E_1'} \\
& {\ell_1(B_{E_1'})} \ar@{->>}[ru]^{Q_{E_1'}} & }
\end{equation*}
with $\|S\| \leq (1+\varepsilon)$.
Let $B:c_0(B_{E_1'}) \times E_2\times \dots  \times E_n \to F$ be given by the formula
$B (a,x_2, \dots, x_{n})= Ext_1(\widetilde{T})(S'J_{c_0(B_{E_1'})}(a),x_2, \dots, x_{n})$. Note that $B$ is well defined. Using the ideal property and the fact that $\U$  is $F$-Arens stable we have that $B \in\U(c_0(B_{E_1'}),E_2,\dots,E_n;F) $ and $\|B\|_{\U} \leq \|T\|_\U (1+\varepsilon)$.

If we show that $\Psi_1 (B)=T$ we are done. It is an
easy exercise to prove the equality $I_{E_1}(x) (a) = Q_{E_1'}(a)(x)$ for $x \in
E_1$ and $a \in \ell_1(B_{E_1'})$. Now notice that
$$\overleftarrow{(J_F\circ B)}(\cdot,x_2,\dots,x_n,y')=S[b\mapsto y'\big( \widetilde T(b,x_2,\dots,x_n)\big)],$$
where  $[b\mapsto y'\big(\widetilde T(b,x_2,\dots,x_n)\big)]$ is a functional defined on $\ell_\infty(B_{E_1'})$.
Indeed, given $a\in c_0(B_{E_1'})$,
\begin{align*}
  S[b\mapsto y'\big(\widetilde T(b,x_2,\dots,x_n)\big)](a) & =  J_{c_0(B_{E_1'})}(a)\big ( S[b\mapsto y'\big( \widetilde T(b,x_2,\dots,x_n)\big)]\big )\\
  & = S'  J_{c_0(B_{E_1'})}(a)[b\mapsto y'\big(\widetilde T(b,x_2,\dots,x_n)\big)]\\
  & = [Ext_1(\widetilde T)(S'  J_{c_0(B_{E_1'})}(a),x_2,\dots,x_n)](y')\\
  & = y'\big(B(a,x_2,\dots,x_n)\big)\\
  & = \overleftarrow{(J_F\circ B)}(\cdot,x_2,\dots,x_n,y')(a).
\end{align*}
Finally, let $y'\in F'$, then
\begin{align*}
y'\big(\Psi_1 (B)(x_1, \dots, x_n)\big)  & = y'\big(Ext_1(B)(I_{E_1}(x_1), x_2,\dots,x_n)\big)\\
& = Ext_1(B)(I_{E_1}(x_1), x_2,\dots,x_n)(y')\\
& = (I_{E_1}(x_1)) [\overleftarrow{(J_F\circ B)}(\cdot,x_2, \dots,x_{n},y')] \\
& = Q_{E_1'} (\overleftarrow{(J_F\circ B)}(\cdot,x_2, \dots,x_{n},y') (x_1) \\
& = \big (Q_{E_1'} S [b\mapsto  y'\big(\widetilde T(b,x_2,\dots,x_n)\big)]\big ) (x_1) \\
& = \big (I_{E_1}' [b\mapsto y'\big(\widetilde T(b,x_2,\dots,x_n)\big)]\big ) (x_1)  \\
& = y'\big(\widetilde T(I_{E_1} x_1,x_2, \dots, x_n)\big) \\
& = y'\big( T(x_1, \dots, x_n)\big),
\end{align*}
which ends the proof.
\end{proof}

We are now ready to prove our main result, Theorem \ref{Lewis theorem}.
\begin{proof}{(of Theorem \ref{Lewis theorem})}

It is not difficult to see that an ideal of multilinear operators $\U$ is extendible if and only if its associated tensor norm $\alpha$ is projective in the first $n$ coordinates (as can be seen in \cite[Chapter 3]{Gal12}). In other words, if $q_1:X_1 \overset 1 \twoheadrightarrow Y_1$, $\dots$, $q_n: X_n \overset 1 \twoheadrightarrow Y_n$ are quotient mappings then the mapping
$$ q_1 \otimes \dots q_n \otimes id_Z : (X_1 \tilde \otimes \dots \tilde \otimes X_n \tilde \otimes Z, \alpha) \overset 1 \twoheadrightarrow (Y_1 \tilde \otimes \dots \tilde \otimes Y_n \tilde \otimes Z, \alpha)$$
is also a quotient.
Therefore the down arrows on the left side of the Figure \ref{diagrama}  are all quotient mappings.

\begin{figure}[H]
  \begin{equation*}
\xymatrix{ \big( \widetilde{\otimes}_{i=1}^n \ell_1(B_{E_i'})\widetilde{\otimes} F, \alpha \big) \ar[rr]^{\varrho_0} \ar@{->>}[d]^{{\otimes}_{i=1}^{n-1} Id_{\ell_1(B_{E_i'})}{\otimes} Q_{E_n'}{\otimes}Id_{F} }
&  & \U(c_0(B_{E_1'}),\dots,c_0(B_{E_n'});F) \ar@{->>}[d]_{\Psi_n} \\
( (\widetilde{\otimes}_{i=1}^{n-1} \ell_1(B_{E_i'}) ) \widetilde{\otimes} E_n'\widetilde{\otimes} F, \alpha ) \ar[rr]^{\varrho_1} \ar@{->>}[d]^{({\otimes}_{i=1}^{n-2} Id_{\ell_1(B_{E_i'})} ) {\otimes} Q_{E_{n-1}'} {\otimes} Id_{E_n'}{\otimes}Id_{F} }
& & \U(c_0(B_{E_1'}),\dots,c_0(B_{E_{n-1}'}),E_n;F) \ar@{->>}[d]_{\Psi_{n-1}} \\
((\widetilde{\otimes}_{i=1}^{n-2} \ell_1(B_{E_i'}) ) \widetilde{\otimes} E_{n-1}' \widetilde{\otimes} E_n'\widetilde{\otimes} F, \alpha ) \ar[rr]^{\varrho_2} \ar@{->>}[d]
& & \U(c_0(B_{E_1'}),\dots,c_0(B_{E_{n-2}'}),E_{n-1},E_n;F) \ar@{->>}[d] \\
\vdots & \dots & \vdots \\
\ar@{->>}[d] & \dots & \ar@{->>}[d]\\
(\ell_1(B_{E_1'}) \widetilde{\otimes} (\widetilde{\otimes}_{i=2}^{n} E_i' )\widetilde{\otimes}F, \alpha ) \ar[rr]^{\varrho_{n-1}} \ar@{->>}[d]^{Q_{E_1'} \otimes ({\otimes}_{i=2}^{n} Id_{E_i'} )\otimes Id_{F}}
& & \U(c_0(B_{E_1'}),E_2,\dots,E_n;F)  \ar@{->>}[d]_{\Psi_1} \\
(\widetilde{\otimes}_{i=1}^{n} E_i'\widetilde{\otimes} F, \alpha) \ar[rr]^{\varrho_n}
& & \U(E_1,\dots,E_n;F)  \\
}
\end{equation*}
\caption{\textbf{Commutative diagram used in the proof of Theorem~\ref{Lewis theorem}}}
\label{diagrama}
\end{figure}

On the other hand, since $E_1,\dots,E_n$ are Asplund spaces (and, obviously, also the spaces $c_0(B_{E_1'}),\dots, c_0(B_{E_n'})$) and $\U$ is $F$-Arens stable we have, by Proposition \ref{cociente}, that the down arrows on the right side are quotient mappings too.

Now, since $\U$ has the $F$-RNp, this implies that $\varrho_0$ is a quotient mapping and this yields that $\varrho_1$ is a quotient mapping also (by Remark \ref{diagrama conmutativo} the diagram is commutative). Proceeding inductively in each square we obtain that $\varrho_n$ is a quotient mapping and this concludes the proof.
\end{proof}

Our result, Theorem~\ref{Lewis theorem}, generalizes Lewis' theorem \cite[Theorem 33.3]{DefFlo93} and also the result given in \cite[Theorem 3.5.]{CarGal11} for scalar multilinear operators.
To see the first assertion, suppose that $\alpha$ is a tensor norm of order $2$ with the Radon-Nikod\'ym property (in the sense of \cite[33.2]{DefFlo93}) associated to the maximal operator ideal $\mathcal A$,  and let $F$ be an Asplund Banach space. We must show that Theorem~\ref{Lewis theorem} gives the quotient mapping
\[E' \widetilde\otimes_{\alpha /} F'\twoheadrightarrow (E\otimes_{(\alpha /)'}F)'.\]

Recall that the transposed tensor norm $({\alpha/})^t = {\backslash(\alpha^t)}$ is associated to the operator ideal ${\backslash(\mathcal A^{dual})}$, as can be seen in \cite[17.8 and 20.12]{DefFlo93}.

Now, since $\alpha$ has the the Radon-Nikod\'ym property the same holds for $\alpha /$ \cite[Proposition 33.2]{DefFlo93}. By Lemma 33.3 in \cite{DefFlo93} we have the quotient mapping
\[ E' \widetilde\otimes_{\alpha /} \ell_1(J) \twoheadrightarrow (E\otimes_{(\alpha /)'}c_0(J))',\]
for any index set $J$.
In other words,
\[ \ell_1(J) \widetilde\otimes_{(\alpha /)^t} E' = \ell_1(J) \widetilde\otimes_{\backslash (\alpha^t )} E' \twoheadrightarrow (c_0(J)\otimes_{(\backslash (\alpha^t ))'}E)' = {\backslash(\mathcal A^{dual})}(c_0(J);E'),\]

This implies that the ideal ${\backslash(\mathcal A^{dual})}$ has the vector-RNp in the sense of Definition~\ref{def RNp}. Note that the ideal ${\backslash(\mathcal A^{dual})}$ is $E'$-Arens stable ($E'$ is a dual space) and is extendible in the first variable, which means that any $S \in {\backslash(\mathcal A^{dual})}(X;Y)$ can be extended to an operator in ${\backslash(\mathcal A^{dual})}(Z;Y)$ with the same ideal norm, for every superspace $Z \supset X$. Therefore by Theorem~\ref{Lewis theorem} we obtain the quotient mapping
\[F \widetilde\otimes_{\backslash (\alpha^t )} E' \twoheadrightarrow  {\backslash(\mathcal A^{dual})}(F;E'),\]
for every Asplund space $F$. If we transposed this relation we get
\[E' \widetilde\otimes_{\alpha /} F'\twoheadrightarrow \mathcal A / (E;F')=(E\otimes_{(\alpha /)'}F)',\]
for every Asplund space $F$. This is exactly the result that appears in \cite[Theorem 33.3]{DefFlo93}.

To deduce \cite[Theorem 3.5]{CarGa11b} from Theorem~\ref{Lewis theorem}, the reasoning is quite similar. Moreover, we do not have to change the order of the spaces involved (as we did before).

In many cases, for an arbitrary space $F$, the ideal $\U$ is $F'$-Arens stable but not $F$-Arens stable (for example for $\E$ and $G\I$, see Section \ref{applications} for definitions). In this situation, Theorem~\ref{Lewis theorem} give us a coincidence result only in the cases where the target space is a dual.  One of the assumptions of the next theorem asks for $\U$ to be $F''$-Arens stable (a much weaker condition). As we are interested in searching for monomial basis on spaces of multilinear operators, it is natural to deal with spaces which have shrinking Schauder bases.

\begin{theorem}\label{teo achicante}
  Let $\U$ be an $F''$-Arens stable extendible ideal with the $F''$-RNp. If $E_1,\dots,E_n$ have shrinking bases and  $F''$ has the bounded approximation property, then
  \[(E_1'\tilde\otimes\dots\tilde\otimes E_n'\tilde\otimes F,\alpha)\overset 1 =\U^{min}(E_1,\dots,E_n;F)\overset 1 =\U(E_1,\dots,E_n;F).\]
\end{theorem}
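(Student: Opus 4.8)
The plan is to bootstrap from the main theorem, Theorem~\ref{Lewis theorem}, which already handles the quotient map $\varrho$. Since $E_1,\dots,E_n$ have shrinking bases, they are in particular Asplund spaces (a space with a shrinking basis has separable dual on each separable piece, and the shrinking property forces the dual to be generated by the coordinate functionals). Thus the hypotheses of Theorem~\ref{Lewis theorem} would apply \emph{if} the ideal were $F$-Arens stable; but here we only assume $F''$-Arens stability. The idea is therefore to first prove the coincidence with $F$ replaced by the dual space $F''$, where Theorem~\ref{Lewis theorem} does apply, and then descend back to $F$. Concretely, I would first apply Theorem~\ref{Lewis theorem} with target space $F''$ (which is legitimate since $\U$ is $F''$-Arens stable and has the $F''$-RNp) to obtain the metric surjection
\[
(E_1'\widetilde\otimes\dots\widetilde\otimes E_n'\widetilde\otimes F'',\alpha)\overset 1\twoheadrightarrow \U(E_1,\dots,E_n;F'').
\]

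The next step is to upgrade the metric surjection into an isometric isomorphism. Here is where the bounded approximation property of $F''$ enters: the remark following Theorem~\ref{representation theorem minimal} states that when $E_1',\dots,E_n'$ and the target space have the bounded approximation property, the map $\varrho$ is in fact an isometric \emph{isomorphism}, i.e.\ $(E_1'\widetilde\otimes\dots\widetilde\otimes E_n'\widetilde\otimes F'',\alpha)\overset 1=\U^{\min}(E_1,\dots,E_n;F'')$. Since $E_1,\dots,E_n$ have shrinking bases, their duals $E_1',\dots,E_n'$ have bases as well (the biorthogonal functionals form a basis of $E_k'$ precisely because the basis of $E_k$ is shrinking), hence enjoy the bounded approximation property; combined with the hypothesis on $F''$, the representation theorem gives the isometry. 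Chaining the injection $\U^{\min}\overset 1\hookrightarrow\U$ (valid under the same approximation hypotheses) with the quotient map then forces the coincidence $\U^{\min}(E_1,\dots,E_n;F'')\overset 1=\U(E_1,\dots,E_n;F'')$.

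The final and most delicate step is to transfer the statement from target $F''$ back to target $F$. The point is to show that an operator $T\in\U(E_1,\dots,E_n;F)$ lands, after the $F''$-identification, inside the closed subspace corresponding to $F$ rather than all of $F''$, and that its minimal norm is unchanged. I would argue that $T$ viewed with values in $F$ and $J_F\circ T$ viewed with values in $F''$ have the same $\U$-norm by the ideal property applied to the isometry $J_F:F\to F''$, and that the finite-type approximants of $J_F\circ T$ produced by the $F''$-level coincidence can be chosen with values in $F$ itself (their $F$-valued coordinates $f_j$ come from the shrinking-basis expansion of $T$, so they already lie in $F$, not merely $F''$). Thus the approximability of $J_F\circ T$ in $\U$-norm descends to approximability of $T$ in $\|\cdot\|_{\U}$ by finite type operators with values in $F$, giving $T\in\U^{\min}(E_1,\dots,E_n;F)$ with matching norm; and the representation theorem applied directly at level $F$ (again using that $E_1',\dots,E_n'$ and $F$ have the bounded approximation property) closes the chain of isometries. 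The main obstacle I anticipate is precisely this last descent: one must verify carefully that the closed range of the representation map at level $F''$ intersects the ``$F$-valued slice'' exactly in $\U(E_1,\dots,E_n;F)$ and that no norm is lost in passing between the $F$- and $F''$-valued pictures, which is where the interplay between shrinking bases (controlling the $E_k'$ side) and the bounded approximation property of $F''$ (controlling the target side) has to be managed simultaneously.
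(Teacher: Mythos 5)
Your overall route is the paper's: lift to the bidual, where Theorem~\ref{Lewis theorem} applies (shrinking bases make the $E_j$ Asplund, and $\U$ is $F''$-Arens stable with the $F''$-RNp), obtaining $J_F\circ T\in\U^{min}(E_1,\dots,E_n;F'')$ with $\|J_F\circ T\|_{\U^{min}}=\|J_F\circ T\|_{\U}$, and then descend to $F$ through finite type approximants built from the coordinate projections $\overline{P_k}=(P_k^1,\dots,P_k^n)$, whose coefficients $T(e_{j_1},\dots,e_{j_n})$ indeed lie in $F$. (Incidentally, your intermediate upgrade of the quotient to an isometric isomorphism at the $F''$ level is harmless but unnecessary: the conclusion of Theorem~\ref{Lewis theorem} for the single operator $J_F\circ T$ is all that is used.)

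However, the ``delicate last step'' you flag is left genuinely open, and it is precisely where the paper inserts two lemmas. First, you assert that the finite type approximants of $J_F\circ T$ ``can be chosen with values in $F$,'' but the $F''$-level coincidence only provides approximation by arbitrary finite type operators with coefficients in $F''$; to see that the specific sequence $J_F\circ T\circ\overline{P_k}$ converges in $\U^{min}$-norm one must argue as in Lemma~\ref{lemaproyeccion}: finite type operators are $\U^{min}$-dense, and for a finite type $R$ one has $R\circ\overline{P_k}\to R$ because the shrinking property gives $\varphi\circ P_k^j\to\varphi$ in $E_j'$-norm; then triangulate. Second, and more seriously, Cauchyness of $(J_F\circ T\circ\overline{P_k})_k$ in $\U^{min}(E_1,\dots,E_n;F'')$ does not by itself yield Cauchyness of $(T\circ\overline{P_k})_k$ in $\U^{min}(E_1,\dots,E_n;F)$ --- a priori the $F$-valued minimal norm could be strictly larger. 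The missing ingredient is the paper's Lemma~\ref{lemma1}: for finite type operators, $\|J_F\circ R\|_{\U^{min}(E_1,\dots,E_n;F'')}=\|R\|_{\U^{min}(E_1,\dots,E_n;F)}$, proved via the Embedding Lemma \cite[13.3]{DefFlo93}, which makes $(\widetilde\otimes_{i=1}^n E_i'\widetilde\otimes F,\alpha)\overset{1}{\hookrightarrow}(\widetilde\otimes_{i=1}^n E_i'\widetilde\otimes F'',\alpha)$ an isometry, combined with the bounded-approximation-property identifications of both tensor products with the corresponding minimal ideals. (Your appeal to ``the representation theorem at level $F$'' also quietly uses that $F$ itself has the bounded approximation property; this does follow from the hypothesis on $F''$ by local reflexivity, but it needs to be said.) With these two points supplied, your argument closes exactly as the paper's does, via the chain $\|T\|_\U\le\|T\|_{\U^{min}}=\|J_F\circ T\|_{\U^{min}}=\|J_F\circ T\|_\U\le\|T\|_\U$.
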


This theorem will be used in the next section for a result on Schauder bases for the ideal $\E$.
We need first some lemmas  in order to give a proof of this theorem. The following asserts, under approximation properties, that finite type multilinear operators have the same $\U^{min}$-norm when the range space is $F$ or its bidual $F''$.


\begin{lemma}\label{lemma1}
  Let $\U\sim\alpha$ be an ideal of multilinear operators and  let $E_1',\dots,E_n', F''$ be Banach spaces with the bounded approximation property. If $T\in\U(E_1,\dots,E_n;F)$ is a finite type multilinear operator, then   $\|J_F\circ T\|_{\U^{min}(E_1,\dots,E_n;F'')}=\|T\|_{\U^{min}(E_1,\dots,E_n;F)}$.
\end{lemma}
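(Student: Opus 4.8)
The plan is to exploit the canonical quotient representation of the minimal norm together with the special structure of finite type operators, which are themselves expressible through elements of the algebraic tensor product $E_1'\otimes\dots\otimes E_n'\otimes F$. Under the bounded approximation property hypotheses on $E_1',\dots,E_n',F''$, the excerpt already records (via Floret's representation theorem and the comments following it) that the canonical map $\varrho$ becomes an isometric isomorphism; concretely, $(E_1'\widetilde\otimes\dots\widetilde\otimes E_n'\widetilde\otimes F;\alpha)\overset 1 =\U^{min}(E_1,\dots,E_n;F)$ whenever $E_1',\dots,E_n',F$ have the bounded approximation property. The first step is therefore to observe that $\|T\|_{\U^{min}(E_1,\dots,E_n;F)}$ equals the tensor norm $\alpha$ of the representing tensor $z=\sum_j (x_1^j)'\otimes\dots\otimes(x_n^j)'\otimes f_j\in E_1'\otimes\dots\otimes E_n'\otimes F$, and similarly $\|J_F\circ T\|_{\U^{min}(E_1,\dots,E_n;F'')}$ equals the $\alpha$-norm of the image tensor $(\id\otimes\dots\otimes\id\otimes J_F)(z)\in E_1'\otimes\dots\otimes E_n'\otimes F''$.

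With this reduction, the statement becomes purely tensorial: I must show that the map $\id_{E_1'}\otimes\dots\otimes\id_{E_n'}\otimes J_F$ preserves the $\alpha$-norm on the algebraic tensor product. Here I would first invoke the metric mapping property of the tensor norm $\alpha$ (property (2) in the definition of a tensor norm of order $n+1$): since $J_F$ is an isometry, $\|J_F\|=1$, so the map does not increase the $\alpha$-norm, giving $\|J_F\circ T\|_{\U^{min}(E_1,\dots,E_n;F'')}\le\|T\|_{\U^{min}(E_1,\dots,E_n;F)}$ directly. The reverse inequality is the substantive direction and is where I expect the real work to lie.

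For the reverse inequality, the natural tool is finite generation of $\alpha$ together with a local argument. Since $\alpha$ is finitely generated, I can compute $\alpha\big((\id\otimes\dots\otimes J_F)(z);\,E_1'\otimes\dots\otimes E_n'\otimes F''\big)$ as an infimum over finite dimensional subspaces $M_1\subset E_1',\dots,M_n\subset E_n',N\subset F''$ whose tensor product contains the image tensor. The obstruction is that an optimal finite dimensional $N\subset F''$ need not lie in the isometric copy $J_F(F)$, so one cannot simply pull the computation back into $F$. The resolution is to use the local reflexivity principle (Principle of Local Reflexivity): given such an $N\subset F''$ and $\varepsilon>0$, there is an injective operator $R:N\to F$ with $\|R\|,\|R^{-1}\|\le 1+\varepsilon$ that fixes $J_F(F)\cap N$, so that applying $\id\otimes\dots\otimes R$ transfers the computation back into $E_1'\otimes\dots\otimes E_n'\otimes F$ while distorting the $\alpha$-norm by at most $1+\varepsilon$, again by the metric mapping property. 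Combining this with the finitely generated formula for $\alpha$ on $F$ and letting $\varepsilon\to 0$ yields $\|T\|_{\U^{min}(E_1,\dots,E_n;F)}\le\|J_F\circ T\|_{\U^{min}(E_1,\dots,E_n;F'')}$, completing the equality.

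The main obstacle, then, is precisely controlling the position of the optimizing finite dimensional subspace $N$ of $F''$ relative to $J_F(F)$; the bounded approximation property hypotheses ensure the identifications of $\U^{min}$ with the completed tensor product are isometric, but the passage between $F$ and $F''$ at the finite dimensional level is exactly what local reflexivity is designed to handle. Once that transfer is in place, the metric mapping property of $\alpha$ does the rest, and no delicate estimate beyond the $(1+\varepsilon)$-distortion is required.
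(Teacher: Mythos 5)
Your proposal is correct and is essentially the paper's proof: after the same BAP-based isometric identifications $(\widetilde\otimes_{i=1}^{n}E_i'\widetilde\otimes F,\alpha)\overset{1}{=}\U^{min}(E_1,\dots,E_n;F)$ and $(\widetilde\otimes_{i=1}^{n}E_i'\widetilde\otimes F'',\alpha)\overset{1}{=}\U^{min}(E_1,\dots,E_n;F'')$, the paper obtains the key fact that $\id_{E_1'}\otimes\dots\otimes\id_{E_n'}\otimes J_F$ is an $\alpha$-isometry by invoking the Embedding Lemma \cite[13.3]{DefFlo93}, whose proof is exactly your finite-generation-plus-local-reflexivity argument (including the subtlety that the optimizing finite dimensional subspace $N\subset F''$ need not lie in $J_F(F)$), so you have merely inlined the cited lemma, adapted to tensor norms of order $n+1$. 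The one detail worth making explicit, in your write-up as in the paper's, is that the hypothesis provides the bounded approximation property for $F''$ whereas the identification over $F$ uses it for $F$; this is harmless, since the BAP of $F''$ passes to $F$ by local reflexivity.
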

\begin{proof} Using the Embedding Lemma \cite[13.3]{DefFlo93} and the fact that $E_1',\dots,E_n',F''$ have the bounded approximation property, we have the following commutative diagram

  \[  \xymatrix{
  (\widetilde{\otimes}_{i=1}^n E_i'\widetilde{\otimes} F'', \alpha )  \ar@{=}[r] & \U^{min}(E_1,\dots,E_n;F'')\\
  (\widetilde{\otimes}_{i=1}^n E_i'\widetilde{\otimes} F, \alpha ) \ar@{=}[r] \ar@{^{(}->}[u]^1
  & \U^{min}(E_1,\dots,E_n;F)\ar@{^{(}->}[u]}, \]
  the proof easily follows from this.
\end{proof}

Given an operator in $\U^{min}(E_1,\dots,E_n;F)$, Lemma \ref{lemaproyeccion} below, shows an explicit sequence of finite type multilinear operators that approximate it.

\begin{lemma}\label{lemaproyeccion}
  Let $E_1,\dots,E_n$ be Banach spaces with shrinking bases and $F$ be a Banach space. Let $\overline{P_k}:=(P_k^1,\dots,P_k^n)$, where $P_k^j$ is the projection on the first $k$ coordinates of the basis of $E_j$. If  $T\in\U^{min}(E_1,\dots,E_n;F)$, then $T\circ \overline{P_k}\longrightarrow T$ in the $\U^{min}$-norm.
\end{lemma}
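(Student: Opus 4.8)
The plan is to prove that $T\circ\overline{P_k}\to T$ in the $\U^{min}$-norm by first reducing the claim to finite type operators via a density argument, and then exploiting the shrinking hypothesis on the bases of $E_1,\dots,E_n$.

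First I would recall that, since $T\in\U^{min}(E_1,\dots,E_n;F)$, given $\varepsilon>0$ there exists a finite type operator $S\in\lg_f(E_1,\dots,E_n;F)$ with $\|T-S\|_{\U^{min}}<\varepsilon$; this is precisely the density of finite type operators in $\U^{min}$ coming from its definition as $\overline{\lg_f}^{\|\cdot\|_\U}$ (recall $\U^{min}$ is the closure of $\lg_f$ in the minimal norm). Writing $T\circ\overline{P_k}-T=(T-S)\circ\overline{P_k}+(S\circ\overline{P_k}-S)+(S-T)$, the ideal property controls the first and last summands uniformly in $k$: since each $P_k^j$ is a basis projection we have $\sup_k\|P_k^j\|=:c_j<\infty$, hence $\|(T-S)\circ\overline{P_k}\|_{\U^{min}}\le c_1\cdots c_n\,\|T-S\|_{\U^{min}}<c_1\cdots c_n\,\varepsilon$. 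Thus it suffices to show that $S\circ\overline{P_k}\to S$ in the $\U^{min}$-norm for every \emph{finite type} $S$.

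Next I would handle the finite type case. By linearity it is enough to treat a single elementary term $S(x_1,\dots,x_n)=(x_1')(x_1)\cdots(x_n')(x_n)\cdot f$ with $x_j'\in E_j'$ and $f\in F$. Then $S\circ\overline{P_k}$ corresponds to replacing each functional $x_j'$ by $(P_k^j)'(x_j')=x_j'\circ P_k^j$, so that
\[
S-S\circ\overline{P_k}=\sum_{j=1}^n \big(\text{term with } x_1'\circ P_k^1,\dots,x_{j-1}'\circ P_k^{j-1},\,x_j'-x_j'\circ P_k^j,\,x_{j+1}',\dots,x_n'\big),
\]
a telescoping decomposition into $n$ finite type operators, each of which differs from $S$ in exactly one slot by the functional $x_j'-x_j'\circ P_k^j=x_j'-(P_k^j)'x_j'$. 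Since the basis of $E_j$ is \emph{shrinking}, the adjoint projections $(P_k^j)'$ converge strongly to the identity on $E_j'$, i.e.\ $\|x_j'-(P_k^j)'x_j'\|_{E_j'}\to 0$ as $k\to\infty$. Applying the ideal inequality to each telescoped term (the remaining functionals having norms bounded by $c_j\|x_j'\|$) bounds $\|S-S\circ\overline{P_k}\|_{\U^{min}}$ by a finite sum each of whose summands contains a factor $\|x_j'-(P_k^j)'x_j'\|\to0$, so the whole expression tends to $0$.

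The main obstacle I anticipate is justifying the use of the $\U^{min}$-norm rather than the ambient $\U$-norm throughout these estimates: the ideal inequalities in property (ii) are stated for $\U$, so I must be careful that the composition bound $\|S\circ T\circ(R_1,\dots,R_n)\|_{\U^{min}}\le\|S\|\,\|T\|_{\U^{min}}\|R_1\|\cdots\|R_n\|$ genuinely holds for the minimal norm. This follows because $\U^{min}$ is itself an ideal (being a composition ideal $\overline{\mathfrak F}\circ\U\circ(\overline{\mathfrak F},\dots,\overline{\mathfrak F})$) and therefore inherits the metric mapping property, with each basis projection $P_k^j$ being approximable; once this is in place the two displayed reductions are purely formal and the shrinking hypothesis delivers the conclusion. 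No appeal to the bounded approximation property or to $F''$ is needed here, so this lemma is genuinely elementary relative to the machinery already developed.
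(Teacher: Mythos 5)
Your proof is correct and follows essentially the same route as the paper's: approximate $T$ by a finite type operator (dense in the $\U^{min}$-norm), control the error terms uniformly via the boundedness of the basis projections and the ideal property, and handle the finite type case by a telescoping decomposition in which each summand carries a factor $\|x_j'-x_j'\circ P_k^j\|_{E_j'}\to 0$ by the shrinking hypothesis. Your explicit verification that the ideal inequality holds for the $\U^{min}$-norm (since $\U^{min}$ is itself a composition ideal) is a point the paper leaves implicit, but it does not change the argument.
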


\begin{proof}
  First observe that $\U^{min}(E_1,\dots,E_n;F)$ is the closure of the finite type multilinear operators in $\U^{min}$-norm (see \cite[Lemma 3.3]{Flo01} for a similar result in the polynomial context). Fix $\varepsilon>0$, and let $R\in\U(E_1,\dots,E_n;F)$ be a finite type multilinear operator such that $\|T-R\|_{\U^{min}(E_1,\dots,E_n;F)}<\varepsilon$, then $\|R\circ \overline{P_k}-T\circ \overline{P_k}\|_{\U^{min}}\leq\|P_k^1\|\cdots\|P_k^n\|\  \varepsilon<C\varepsilon$. If $R=\sum_{j=1}^r \varphi_j^1(\cdot)\cdots\varphi_j^n(\cdot)f_j$, then
\begin{align*}
& \| R  -R\circ  \overline{P_k}\|_{\U^{min}} = \\
  & =   \left\|\sum_{j=1}^r\big( \varphi_j^1(\cdot)\cdots\varphi_j^n(\cdot)- (\varphi_j^1\circ P_k^1)(\cdot)\cdots(\varphi_j^n\circ P_k^n)(\cdot)\big)f_j\right\|_{\U^{min}}\\
  & \leq  \sum_{j=1}^r\|\big( \varphi_j^1(\cdot)\cdots\varphi_j^n(\cdot)- (\varphi_j^1\circ P_k^1)(\cdot)\cdots(\varphi_j^n\circ P_k^n)(\cdot)\big) f_j\|_{\U^{min}}\\
  & \leq  \sum_{j=1}^r(\| \big( \varphi_j^1(\cdot)\cdots\varphi_j^n(\cdot)- (\varphi_j^1\circ P_k^1)(\cdot)\varphi_j^2(\cdot)\cdots\varphi_j^n(\cdot)\big) f_j\|_{\U^{min}}+\cdots\\
  &  \;\;\;\; \cdots + \| \big( (\varphi_j^1\circ P_k^1)(\cdot)\cdots(\varphi_j^{n-1}\circ P_k^{n-1})(\cdot)\varphi_j^n(\cdot)- (\varphi_j^1\circ P_k^1)(\cdot)\cdots(\varphi_j^n\circ P_k^n)(\cdot)\big) f_j\|_{\U^{min}})\\
  & \leq    \sum_{j=1}^rD \big{(}\|\varphi_j^1(\cdot)-(\varphi_j^1\circ P_k^1)(\cdot)\|_{E_1'}+\cdots+\|\varphi_j^n(\cdot)-(\varphi_j^n\circ P_k^n)(\cdot)\|_{E_n'}\big{)}\cdot\|f_j\|_{F},
  \end{align*}
  which is less than $\varepsilon$ for $k$ sufficiently large.

  Therefore,
    \[\|T-T\circ \overline{P_k}\|_{\U^{min}}\leq \|T-R\|_{\U^{min}}+\|R-R\circ \overline{P_k}\|_{\U^{min}}+\|R\circ \overline{P_k}-T\circ \overline{P_k}\|_{\U^{min}}<K\varepsilon\]
  for $k$ big enough.
\end{proof}

Now we are ready to prove the theorem.

\begin{proof}{(of Theorem \ref{teo achicante})}

Let $T\in\U(E_1,\dots,E_n;F)$. By Theorem \ref{Lewis theorem} we have that $J_F\circ T\in\U^{min}(E_1,\dots,E_n;F'')$ (spaces with shrinking bases are Asplund spaces) and $\|J_F\circ T\|_{\U^{min}}=\|J_F\circ T\|_{\U}$. By Lemma~\ref{lemaproyeccion}, $J_F\circ T\circ\overline{P_k}\longrightarrow J_F\circ T$ in the $\U^{min}$-norm, thus $(J_F\circ T\circ\overline{P_k})_k$ is a Cauchy sequence in $\U^{min}(E_1,\dots,E_n;F'')$. By Lemma \ref{lemma1}, $\|J_F\circ T\circ\overline{P_k}\|_{\U^{min}}=\| T\circ\overline{P_k}\|_{\U^{min}}$. Moreover,  $T\circ\overline{P_k}$ is also a Cauchy sequence in $\U^{min}(E_1,\dots,E_n;F)$ which obviously converges to $T\in\U^{min}(E_1,\dots,E_n;F)$. Then,
\[\|T\|_\U \leq \|T\|_{\U^{min}}=\|J_F\circ T\|_{\U^{min}}=\|J_F\circ T\|_{\U}\leq\|T\|_\U.\]
This completes the proof.
\end{proof}

Let $E_1, \dots, E_n$ be Banach spaces with Schauder basis $(e_{j_1})_{j_1}, \dots, (e_{j_1})_{j_n}$ respectively and let $\beta$ be tensor norm of order $n$. There is a natural ordering, usually called the generalized square ordering of Gelbaum-Gil de Lamadrid (or simply square ordering) in $\mathbb{N}^n$ such that the monomials $(e_{j_1} \otimes \dots \otimes e_{j_n})_{(j_1,\dots,j_n) \in \mathbb{N}^n}$ (with this ordering) form a Schauder basis  of the tensor $(E_1 \tilde\otimes \dots \tilde\otimes E_n , \beta)$ (see for example \cite{DimZal96,GreRya05} for an appropriate treatment of  this ordering). This follows by mimicking the ideas of \cite{GeLama61} (see also \cite[Exercise 12.9]{DefFlo93}) for tensor products of order two. In \cite[Theorem 8]{CarLa08} this result is generalized for atomic decompositions.
Using our previous results we can provide conditions to ensure the existence of monomial bases on ideals of multilinear operators.

\begin{theorem}\label{bases ideales}
  Let $\U\sim\alpha$ be an extendible ideal of multilinear operators.
  \begin{itemize}
    \item [(1)] If $\U$ is $F$-Arens stable, has the $F$-RNp and $E_1',\dots,E_n', F$ have Schauder bases $(e_{j_1}')_{j_1},\dots, (e_{j_n}')_{j_n}, (f_l)_l$ respectively, then the monomials
             \[\big(\ e_{j_1}'(\ \cdot \ )\cdots e_{j_n}'(\ \cdot \ )\cdot f_l\ \big)_{j_1,\dots,j_n,l}\]
         with the square ordering form a Schauder basis of $\U(E_1,\dots,E_n;F)$.
    \item [(2)] If $\U$ is $F''$-Arens stable and has the $F''$-RNp, $F''$ has the bounded approximation property, $E_1,\dots,E_n$ have shrinking Schauder bases $(e_{j_1})_{j_1},\dots ,(e_{j_n})_{j_n}$ respectively and $F$ has basis $(f_l)_l$, then the monomials (associated to the coordinate functionals)
        \[\big(\ e_{j_1}'(\ \cdot \ )\cdots e_{j_n}'(\ \cdot \ )\cdot f_l\ \big)_{j_1,\dots,j_n,l}\]
        with the square ordering form a Schauder basis of $\U(E_1,\dots,E_n;F)$.
  \end{itemize}
\end{theorem}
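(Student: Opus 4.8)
The essential strategy is to transport the known monomial basis on tensor products (Gelbaum--Gil de Lamadrid, as recalled in the paragraph preceding the statement) across the isometric identification between the tensor product and the ideal provided by our coincidence theorems. The key observation is that, under the hypotheses of each item, the earlier results furnish an isometric isomorphism
\[
(E_1'\widetilde\otimes\dots\widetilde\otimes E_n'\widetilde\otimes F,\alpha)\overset1{=}\U(E_1,\dots,E_n;F),
\]
sending an elementary tensor $(x_1^j)'\otimes\dots\otimes(x_n^j)'\otimes f_j$ to the finite-type operator $(x_1^j)'(\cdot)\cdots(x_n^j)'(\cdot)\,f_j$ (the map $\varrho$ of Theorem \ref{representation theorem minimal}). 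Since a linear isometric isomorphism carries a Schauder basis onto a Schauder basis (preserving the order), it suffices to verify that the monomials $(e_{j_1}'\otimes\dots\otimes e_{j_n}'\otimes f_l)$ form a basis of the tensor product with the square ordering, and then read off that their images---precisely the monomials $e_{j_1}'(\cdot)\cdots e_{j_n}'(\cdot)\cdot f_l$---form a basis of $\U(E_1,\dots,E_n;F)$.

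For item (1), I would argue as follows. Since $E_1',\dots,E_n',F$ all have Schauder bases, they in particular have the bounded approximation property, so by the remark following Theorem \ref{representation theorem minimal} we have $(E_1'\widetilde\otimes\dots\widetilde\otimes E_n'\widetilde\otimes F,\alpha)\overset1{=}\U^{min}(E_1,\dots,E_n;F)$ isometrically. A basis is shrinking if and only if the coordinate functionals span the dual; here the relevant point is simply that having a basis yields the (bounded) approximation property, which is all that is needed for the tensor-product identification. The monomial basis on $(E_1'\widetilde\otimes\dots\widetilde\otimes E_n'\widetilde\otimes F,\alpha)$ with the square ordering is guaranteed by the Gelbaum--Gil de Lamadrid construction for finitely generated tensor norms (applied to the $(n+1)$ spaces $E_1',\dots,E_n',F$, using associativity of the tensor product as noted in the Introduction). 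Finally, to promote $\U^{min}=\U$ I would invoke Theorem \ref{Lewis theorem}: the bases on $E_1',\dots,E_n'$ make $E_1,\dots,E_n$ Asplund (a space whose dual has a basis, hence is separable on separable subspaces, is Asplund), and $\U$ is $F$-Arens stable with the $F$-RNp, so $\U^{min}\overset1{=}\U$. Transporting the basis through this isometry completes item (1).

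Item (2) is where the genuine work lies, and the main obstacle is that $F$ itself need not have the approximation property nor be a dual space, so I cannot directly identify $(E_1'\widetilde\otimes\dots\widetilde\otimes E_n'\widetilde\otimes F,\alpha)$ with $\U$ via the bounded-approximation-property remark. Instead I would lean entirely on Theorem \ref{teo achicante}, whose hypotheses match item (2) verbatim: the $F''$-Arens stability, the $F''$-RNp, the bounded approximation property of $F''$, the shrinking bases on the $E_j$, and a basis on $F$. That theorem already delivers the isometric chain
\[
(E_1'\widetilde\otimes\dots\widetilde\otimes E_n'\widetilde\otimes F,\alpha)\overset1{=}\U^{min}(E_1,\dots,E_n;F)\overset1{=}\U(E_1,\dots,E_n;F).
\]
Once this identification is in hand, the remaining step is identical to item (1): the square-ordered monomials form a Schauder basis of the tensor product (using the bases of $E_1',\dots,E_n'$ --- the coordinate functionals of the shrinking bases of the $E_j$ --- and the basis $(f_l)_l$ of $F$), and the isometry transports this basis to the asserted monomial basis of $\U(E_1,\dots,E_n;F)$. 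The subtle point I would be careful to spell out is that the monomials on the ideal side are built from the \emph{coordinate functionals} $e_{j_k}'$ of the given bases, which live in $E_k'$, matched with the basis vectors $f_l$ of $F$; because the shrinking hypothesis ensures $(e_{j_k}')_{j_k}$ is a basis of $E_k'$, these are exactly the basis vectors of the tensor factors, so no mismatch arises.
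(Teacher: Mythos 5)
Your proposal is correct and follows essentially the same route as the paper's proof: item (1) combines Theorem \ref{Lewis theorem} with the fact that the quotient map is an isometric isomorphism because $E_1',\dots,E_n',F$ have bases (hence the bounded approximation property), so the Gelbaum--Gil de Lamadrid monomial basis of $(E_1'\widetilde\otimes\dots\widetilde\otimes E_n'\widetilde\otimes F,\alpha)$ transports to $\U(E_1,\dots,E_n;F)$, and item (2) follows identically from Theorem \ref{teo achicante}. Your added explicit checks (that bases on the duals force $E_1,\dots,E_n$ to be Asplund, and that the shrinking hypothesis makes the coordinate functionals genuine bases of the $E_k'$) are details the paper leaves implicit but are entirely in the spirit of its argument.
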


\begin{proof}
Item $(1)$ is a straightforward application of Theorem \ref{Lewis theorem} and the fact that the monomials with the square ordering form a basis of the tensor product $(E_1'\tilde\otimes\dots\tilde\otimes E_n'\tilde\otimes F, \alpha)$. Recall that the quotient mapping  given in equation \eqref{quotient} in Theorem~\ref{Lewis theorem} is actually an isometric isomorphism in this case since all the spaces involved have the bounded approximation property

Item $(2)$ follows identically using Theorem \ref{teo achicante}.
\end{proof}

We now relate structural properties of $\U(E_1,\dots ,  E_n, F')$ with properties of  $E_1,\dots, E_n, F$ and their tensor product. Namely, separability, the Radon-Nikodým and Asplund properties.

The following proposition is straightforward.
\begin{proposition} \label{separabilidad}
Let $\U\sim\alpha$ be an $F$-Arens stable extendible ideal with the $F$-RNp and let $E_1, \dots, E_n, F$ be Banach spaces such that $E_i'$ and $F$ are separable spaces, for all  $1 \leq i \leq n$.
Then the space $\U(E_1,\dots,E_n;F)$ is separable.
\end{proposition}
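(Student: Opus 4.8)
The plan is to reduce the separability of $\U(E_1,\dots,E_n;F)$ to that of its associated tensor product and then invoke the main theorem. First I would observe that a Banach space with separable dual is an Asplund space; hence, since each $E_i'$ is separable by hypothesis, every $E_i$ is Asplund. All the hypotheses of Theorem~\ref{Lewis theorem} are therefore met ($\U$ is an $F$-Arens stable extendible ideal with the $F$-RNp), and we obtain the quotient mapping
\[(E_1' \widetilde\otimes \dots \widetilde\otimes E_n' \widetilde\otimes F, \alpha) \overset 1 \twoheadrightarrow \U(E_1,\dots,E_n;F).\]
Since the continuous surjective image of a separable space is separable, it suffices to prove that the tensor product $(E_1' \widetilde\otimes \dots \widetilde\otimes E_n' \widetilde\otimes F, \alpha)$ is separable.

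To see this, I would fix countable dense subsets $D_i \subseteq E_i'$ for $1 \le i \le n$ and $D \subseteq F$, and consider the countable set $\mathcal{C}$ of all finite rational (rational-complex) linear combinations of elementary tensors $d_1 \otimes \dots \otimes d_n \otimes d$ with $d_i \in D_i$ and $d \in D$. As the algebraic tensor product $E_1' \otimes \dots \otimes E_n' \otimes F$ is dense in the completion, it is enough to approximate an arbitrary elementary tensor $x_1' \otimes \dots \otimes x_n' \otimes f$ in the $\alpha$-norm by elements of $\mathcal{C}$. Using the inequality $\alpha \le \pi$ together with a standard telescoping estimate (replacing one coordinate at a time and bounding each step by the product of the norms of the unchanged factors), one gets
\[\alpha\big(x_1' \otimes \dots \otimes x_n' \otimes f - d_1 \otimes \dots \otimes d_n \otimes d\big) \le \sum_{i=1}^n \|d_1\|\cdots\|d_{i-1}\|\,\|x_i'-d_i\|\,\|x_{i+1}'\|\cdots\|x_n'\|\,\|f\| + \|d_1\|\cdots\|d_n\|\,\|f-d\|,\]
which tends to zero as $d_i \to x_i'$ and $d \to f$. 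Hence $\mathcal{C}$ is dense and the tensor product is separable.

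The only point requiring any care is this density of $\mathcal{C}$, which rests solely on the bound $\alpha \le \pi$ from the definition of a tensor norm and the routine multilinearity (telescoping) argument above; every other step is immediate. Combining the separability of the tensor product with the quotient mapping furnished by Theorem~\ref{Lewis theorem} yields at once the separability of $\U(E_1,\dots,E_n;F)$.
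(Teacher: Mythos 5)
Your proposal is correct and takes essentially the same route the paper intends: the paper labels Proposition~\ref{separabilidad} as straightforward precisely because, as you spell out, separable duals make each $E_i$ Asplund, Theorem~\ref{Lewis theorem} then provides the metric surjection $(E_1'\widetilde\otimes\dots\widetilde\otimes E_n'\widetilde\otimes F,\alpha)\twoheadrightarrow\U(E_1,\dots,E_n;F)$, and separability passes through the quotient. Your telescoping estimate via $\alpha\le\pi$ correctly fills in the (routine) separability of the tensor product, which the paper leaves implicit.
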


Recall that a Banach space $E$ has the \emph{Radon-Nikod\'ym property}\index{Radon-Nikod\'ym property} if, for every finite measure $\mu$, every operator $T : L_1(\mu) \to E$ is \emph{representable}, \emph{i.e.,} there exists a bounded $\mu$-measurable function $g : \Omega \to E$ with $$Tf = \int fg d\mu \mbox{\;\; for all $f \in L_1(\mu)$.}$$
A Banach space  $E$ is Asplund if its dual $E'$ has the Radon-Nikod\'ym property.
The following theorem is a transfer type result. It shows that, under certain conditions, the Asplund property of the spaces involved can be transferred to the tensor product.
A result of this nature can be found in \cite{RueSte82} for the injective tensor product.

\begin{theorem} \label{RN para tensores}
Let $\U\sim\alpha$ be an extendible maximal ideal with the $F'$-RNp for every separable dual space $F'$ and $E_1, \dots, E_n$ be Banach spaces.
The following are equivalent:
\begin{itemize}
\item [(1)] The spaces $E_1,\dots, E_n, F$ are Asplund.
\item [(2)] The space  $(E_1\tilde\otimes\dots\tilde\otimes E_n\tilde\otimes F,\alpha')$ is Asplund.
\item[(3)] The space $\U(E_1,\dots ,  E_n, F')$ has the Radon-Nikodým property.
\end{itemize}

\end{theorem}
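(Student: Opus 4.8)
The plan is to prove the cycle of implications $(3) \Rightarrow (1) \Rightarrow (2) \Rightarrow (3)$, using the coincidence theorem (Theorem~\ref{Lewis theorem}) to pass between the ideal and its tensorial representation, and the standard duality $E$ Asplund $\iff$ $E'$ has the Radon-Nikod\'ym property together with the three-space/transference behaviour of these properties under tensor products. The guiding idea is that Theorem~\ref{Lewis theorem} lets me identify $\U(E_1,\dots,E_n;F')$ isometrically with $(E_1'\widetilde\otimes\dots\widetilde\otimes E_n'\widetilde\otimes F',\alpha)$ whenever the hypotheses apply, and the dual of $(E_1\widetilde\otimes\dots\widetilde\otimes E_n\widetilde\otimes F,\alpha')$ is tightly connected to this space via the maximality of $\U$ (a maximal ideal is a dual space, and its predual is built from the transposed/dual tensor norm $\alpha'$).

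First I would establish $(1)\Rightarrow(2)$. Assuming $E_1,\dots,E_n,F$ are Asplund, I want to show $(E_1\widetilde\otimes\dots\widetilde\otimes E_n\widetilde\otimes F,\alpha')$ is Asplund, equivalently that its dual has the Radon-Nikod\'ym property. The dual of this tensor product is, by the definition of the dual tensor norm and maximality of $\U$, isometric to $\U(E_1,\dots,E_n;F')$. Since each $E_i$ is Asplund, its dual $E_i'$ has RNp; since $F$ is Asplund, $F'$ has RNp. Now apply Theorem~\ref{Lewis theorem} with target space $F'$: the ideal $\U$ is extendible, it is $F'$-Arens stable because $\U$ is maximal and $F'$ is a dual space (as noted before Theorem~\ref{Lewis theorem}, maximal ideals are $F'$-Arens stable for every dual $F'$), and it has the $F'$-RNp by hypothesis (at least in the separable case, which is all that is needed after reducing to separable subspaces). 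This identifies $\U(E_1,\dots,E_n;F')$ isometrically with $(E_1'\widetilde\otimes\dots\widetilde\otimes E_n'\widetilde\otimes F',\alpha)$. The task then reduces to showing this last space has the Radon-Nikod\'ym property, which follows because it is the minimal-kernel representation and thus an $\alpha$-tensor product of RNp spaces $E_1',\dots,E_n',F'$; here I would invoke that the completed $\alpha$-tensor product of spaces with RNp has RNp under the finitely-generated/minimal structure, reducing via separability to countably many spaces. This is the step I expect to be the main obstacle: one must verify carefully that the Radon-Nikod\'ym property is inherited by the relevant completed tensor product, and that the reduction to the separable case is legitimate (Asplundness and RNp are both separably determined, so one localizes to separable subspaces and separable dual preduals before applying the coincidence).

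Next, for $(2)\Rightarrow(3)$, I would note that $(3)$ asserts precisely that $\U(E_1,\dots,E_n;F')$ has RNp, and since $\U(E_1,\dots,E_n;F')\overset1=(E_1\widetilde\otimes\dots\widetilde\otimes E_n\widetilde\otimes F,\alpha')'$ by maximality and the definition of $\alpha'$, the statement ``$(E_1\widetilde\otimes\dots\widetilde\otimes E_n\widetilde\otimes F,\alpha')$ is Asplund'' is by definition the statement that its dual has RNp, i.e.\ exactly $(3)$. Thus $(2)$ and $(3)$ are two formulations of the same fact and the implication is essentially a matter of unwinding the definition of the Asplund property through the duality $\U\sim\alpha$, $\alpha' = (\alpha)'$.

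Finally, for $(3)\Rightarrow(1)$, I would argue contrapositively: if some $E_i$ or $F$ fails to be Asplund, then $\U(E_1,\dots,E_n;F')$ fails to have RNp. The key point is that each factor embeds in a complemented or metrically controlled way into the ideal space, so a failure of Asplundness in one coordinate propagates upward. Concretely, if $F$ is not Asplund then $F'$ lacks RNp, and one produces inside $\U(E_1,\dots,E_n;F')$ a complemented copy of $F'$ (by fixing nonzero evaluation data in the $E_i$-coordinates, using a norm-one functional $\varphi_i\in E_i'$ for each $i$ and the section $f'\mapsto \varphi_1(\cdot)\cdots\varphi_n(\cdot)\,f'$, which the ideal axioms show is an isometric section), and RNp passes to complemented subspaces, giving a contradiction. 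The analogous argument handles a non-Asplund $E_i$ by exhibiting a copy of the non-RNp space $E_i'$ inside the ideal. Assembling the three implications closes the cycle and proves the equivalence.
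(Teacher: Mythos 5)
Your handling of $(2)\Leftrightarrow(3)$ — via the representation theorem for maximal ideals, $\U(E_1,\dots,E_n;F')\overset{1}{=}(E_1\widetilde\otimes\dots\widetilde\otimes E_n\widetilde\otimes F,\alpha')'$, together with ``$X$ Asplund iff $X'$ has RNp'' — matches the paper, and your contrapositive argument for $(3)\Rightarrow(1)$ via (almost isometric) complemented copies of $F'$ and of each $E_i'$ inside $\U(E_1,\dots,E_n;F')$ is a legitimate variant of what the paper dismisses as easy. The problem is $(1)\Rightarrow(2)$, which carries all the content of the theorem, and there your argument has a genuine gap — one you flag yourself as ``the main obstacle'' but never close. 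You invoke the principle that the completed $\alpha$-tensor product of spaces with the Radon--Nikod\'ym property again has the Radon--Nikod\'ym property. No such theorem exists for a general finitely generated tensor norm, and it is false in concrete cases: $\ell_2\widetilde\otimes_\varepsilon\ell_2$ contains a copy of $c_0$ and fails RNp although $\ell_2$ has it, and preservation under $\pi$ is a notoriously delicate question — this is precisely why the transference literature cited in the introduction (\cite{Bu03,BuBus06,BuDieDowOja03,RueSte82}) exists. Invoking it here is essentially circular, since RNp of the representing tensor product is the very statement $(3)$ you are trying to reach. There is a second, independent obstruction: Theorem~\ref{Lewis theorem} provides only a metric surjection $(E_1'\widetilde\otimes\dots\widetilde\otimes E_n'\widetilde\otimes F',\alpha)\twoheadrightarrow\U(E_1,\dots,E_n;F')$, not an isometric isomorphism (that requires bounded approximation hypotheses which are absent here), and RNp does not pass to quotients — every separable Banach space is a quotient of $\ell_1$ — so even granting your preservation claim you could not transport RNp along the quotient $\varrho$.

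The paper's proof is structured to avoid exactly these two traps. It proves $(1)\Rightarrow(2)$ by separable determination of the Asplund property on the \emph{predual} side: two lemmas (nontrivial, because $\alpha'$ need not respect subspaces isometrically) show that any separable subspace $S\subseteq(E_1\widetilde\otimes\dots\widetilde\otimes E_n\widetilde\otimes F,\alpha')$ embeds isometrically into $(\widetilde\otimes_{j=1}^{n+1}W_j,\alpha')$ with each $W_j$ separable; then $S'$ is a quotient of $(\widetilde\otimes_{j=1}^{n+1}W_j,\alpha')'\overset{1}{=}\U(W_1,\dots,W_n;W_{n+1}')$ by the representation theorem for maximal ideals, and this ideal space is separable by Proposition~\ref{separabilidad} — which is where the hypothesis ``$F'$-RNp for every separable dual'' enters, applied to the target $W_{n+1}'$, a separable dual space because $F$ is Asplund. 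The point is that separability, unlike RNp, does survive quotient mappings, so the mere metric surjection of Theorem~\ref{Lewis theorem} suffices. To repair your proof, replace the tensor-product-preservation step with this separable-reduction argument; as written, the central implication is unproved.
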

We give first some elementary lemmas of the theory of tensor products and tensor norms.
The first one states the if an element $z$  of the tensor product $(\tilde\otimes_{j=1}^nE_j,\alpha)$ can be approximated by finite type tensors (i.e., elements of the algebraic tensor product $\otimes_{j=1}^nE_j$) then $z$ belongs to the tensor product of some separable subspaces of $E_j$.

\begin{lemma}
   Let $\alpha$ be a finitely generated tensor norm and let $z\in(\tilde\otimes_{j=1}^nE_j,\alpha)$. Consider a sequence of finite type tensors $(w_r)_r$ such that $w_r\to z$ in $(\tilde\otimes_{j=1}^nE_j,\alpha)$. Then there exist separable subspaces $W_j\subset E_j$ $(1\leq j\leq n)$ such that:

  \begin{enumerate}
  \item $z\in(\tilde\otimes_{j=1}^n W_j,\alpha)$
    \item $\alpha(w_r;\tilde\otimes_{j=1}^n W_j)=\alpha(w_r;\tilde\otimes_{j=1}^n E_j)$, for all $r\in\N$.
    \item $\alpha(w_r-w_l;\tilde\otimes_{j=1}^n W_j)=\alpha(w_r-w_l;\tilde\otimes_{j=1}^n E_j)$, for all $r,l\in\N$.

  \end{enumerate}

\end{lemma}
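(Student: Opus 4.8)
The plan is to exploit the hypothesis that $z$ is a limit of finite type tensors in order to find separable subspaces of each factor that already carry all the relevant data. First I would write each approximating finite tensor $w_r$ explicitly as a finite sum of elementary tensors, say $w_r=\sum_{s} x_{1,r,s}\otimes\cdots\otimes x_{n,r,s}$. For each fixed $j$ the collection of $j$-th factors $\{x_{j,r,s}: r\in\N,\; s\}$ appearing across \emph{all} the $w_r$ is a countable subset of $E_j$. I would then define $W_j$ to be the closed linear span of this countable set; this is automatically a separable subspace of $E_j$, and by construction every $w_r$ lies in the algebraic tensor product $\otimes_{j=1}^n W_j$.

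Having fixed the $W_j$, conclusion $(1)$ follows immediately: since each $w_r\in\otimes_{j=1}^n W_j$ and the $w_r$ form a Cauchy sequence (being convergent in the ambient space), they converge in the complete space $(\tilde\otimes_{j=1}^n W_j,\alpha)$ to some limit; the metric mapping property of $\alpha$ applied to the inclusions $W_j\hookrightarrow E_j$ guarantees that this limit coincides with $z$ when viewed in $(\tilde\otimes_{j=1}^n E_j,\alpha)$, so $z\in(\tilde\otimes_{j=1}^n W_j,\alpha)$. Conclusions $(2)$ and $(3)$ are about preservation of the $\alpha$-norm of the finite tensors $w_r$ and of the differences $w_r-w_l$ when we pass from the big tensor product to the small one; note $w_r-w_l$ is again a finite type tensor living in $\otimes_{j=1}^n W_j$, so $(3)$ is just $(2)$ applied to these differences and it suffices to prove $(2)$.

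The crux is therefore $(2)$, and here I would use that $\alpha$ is \emph{finitely generated}. By definition of finite generation, $\alpha(w_r;\tilde\otimes_{j=1}^n E_j)$ equals the infimum of $\alpha(w_r;\tilde\otimes_{j=1}^n M_j)$ over finite dimensional $M_j\in FIN(E_j)$ with $w_r\in M_1\otimes\cdots\otimes M_n$. The inequality $\alpha(w_r;\tilde\otimes_{j=1}^n W_j)\ge \alpha(w_r;\tilde\otimes_{j=1}^n E_j)$ is automatic from the metric mapping property applied to the inclusions $W_j\hookrightarrow E_j$. For the reverse inequality I would take any finite dimensional $M_j\subset E_j$ realizing the $E_j$-norm up to $\epsilon$; the subtle point is that these $M_j$ need not sit inside $W_j$. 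The resolution is that finite generation of $\alpha$ over the \emph{subspaces} $W_j$ gives the same value whether computed over $W_j$ or over $E_j$, because any $w_r\in\otimes W_j$ is in particular a finite tensor and its $\alpha$-norm is determined by finite dimensional subspaces of the $W_j$ that contain it; equivalently, one invokes that for finitely generated norms the value on a fixed finite tensor depends only on the finite dimensional subspace it genuinely occupies, which can be chosen inside $W_j$.

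The main obstacle I anticipate is precisely this last matching of finite dimensional subspaces: one must ensure that the infimum defining the finitely generated norm over $E_j$ is actually attained (or approximated) by finite dimensional subspaces contained in $W_j$, rather than arbitrary ones in $E_j$. I would handle this by enlarging the countable generating set used to build $W_j$: in addition to the factors $x_{j,r,s}$ of the $w_r$, I would, for each $r$ and each $m\in\N$, throw in finitely many vectors of $E_j$ coming from a choice of finite dimensional subspaces $M_{j}^{(r,m)}$ that witness $\alpha(w_r;\tilde\otimes E_j)$ to within $1/m$. Since there are only countably many pairs $(r,m)$ and each contributes finitely many vectors, the resulting $W_j$ is still separable, still contains all the $w_r$, and now also contains suitable witnessing subspaces, forcing $\alpha(w_r;\tilde\otimes_{j=1}^n W_j)\le \alpha(w_r;\tilde\otimes_{j=1}^n E_j)$ and hence equality. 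The same enlargement, carried out for the countably many differences $w_r-w_l$, secures $(3)$ simultaneously.
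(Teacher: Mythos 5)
Your final construction is, in substance, the paper's own proof: for each $w_r$ and each tolerance $1/m$ you adjoin finite dimensional subspaces $M_j^{(r,m)}$ witnessing $\alpha(w_r;\tilde\otimes_{j=1}^n E_j)$ up to $1/m$, do the same for the countably many differences $w_r-w_l$, and let $W_j$ be the closed span of this countable family (the paper's $A_j^{k,r}$ and $B_j^{k,r,l}$). With that enlargement the chain $\alpha(w_r;\tilde\otimes_{j=1}^n W_j)\le\alpha(w_r;\tilde\otimes_{j=1}^n M_j^{(r,m)})\le\alpha(w_r;\tilde\otimes_{j=1}^n E_j)+1/m\le\alpha(w_r;\tilde\otimes_{j=1}^n W_j)+1/m$ yields $(2)$, and the analogous chain for differences yields $(3)$. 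You also correctly flagged, and then repaired, the false claim in your third paragraph: for a finitely generated norm the value of $\alpha$ on a finite tensor is an \emph{infimum} over all finite dimensional subspaces containing it, not something ``determined by the subspace it genuinely occupies''; were the latter true, every finitely generated tensor norm would respect subspaces isometrically, which $\pi$ already refutes. So the witnessing-subspace enlargement is not an optional safeguard but the whole point of the lemma.

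There is, however, one genuine error that you never go back and correct: your justification of $(1)$. You assert that $(w_r)$ is Cauchy in $(\tilde\otimes_{j=1}^n W_j,\alpha)$ ``being convergent in the ambient space''. The inequality runs the wrong way for that: the metric mapping property applied to the inclusions $W_j\hookrightarrow E_j$ gives $\alpha(u;\tilde\otimes_{j=1}^n E_j)\le\alpha(u;\tilde\otimes_{j=1}^n W_j)$ for $u\in\otimes_{j=1}^n W_j$, so smallness of $\alpha(w_r-w_l)$ in the ambient space says nothing about its size in the subspace tensor product; for a generic separable $W_j$ containing the $w_r$ (your preliminary choice, say with $\alpha=\pi$) the sequence may fail to be Cauchy there. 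This is precisely why item $(3)$ appears in the statement: it is the mechanism that transfers Cauchyness, and the paper uses it for exactly this purpose. The repair sits inside your own argument: by $(3)$, $\alpha(w_r-w_l;\tilde\otimes_{j=1}^n W_j)=\alpha(w_r-w_l;\tilde\otimes_{j=1}^n E_j)\to 0$, so $(w_r)$ converges in the complete space $(\tilde\otimes_{j=1}^n W_j,\alpha)$ to some $w$; since the inclusion into the ambient space has norm at most $1$, $w_r\to w$ there as well, and uniqueness of limits gives $w=z$. In short, $(1)$ depends on $(3)$, so your order of deduction ($(1)$ ``immediately'', then $(2)$, then $(3)$) must be inverted, as in the paper.
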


\begin{proof}
Since $\alpha$ is a finitely generated tensor norm, given $k,r\in\N$, there exists $A_j^{k,r}\in FIN(E_j)$ such that $w_r\in\tilde\otimes_{j=1}^n A_j^{k,r}$ and $\alpha(w_r;\tilde\otimes_{j=1}^n A_j^{k,r})\leq\alpha(w_r;\tilde\otimes_{j=1}^n E_j)+1/k$.

In the same way, given  $k,r,l\in\N$, there exists $B_j^{k,r,l}\in FIN(E_j)$ such that $w_r-w_l\in\tilde\otimes_{j=1}^n B_j^{k,r,l}$ and $\alpha(w_r-w_l;\tilde\otimes_{j=1}^n B_j^{k,r,l})\leq\alpha(w_r-w_l;\tilde\otimes_{j=1}^n E_j)+1/k$.

Then for $1\leq j\leq n$, we take
\[W_j:=\overline{span[A_j^{k,r}, B_j^{k,r,l} : k,r,l\in\N]},\]
which are separable since $A_j^{k,r}, B_j^{k,r,l}\in FIN(E_j)$. Then,
\[\alpha(w_r;\tilde\otimes_{j=1}^n W_j)\leq\alpha(w_r;\tilde\otimes_{j=1}^n A_j^{k,r})\leq\alpha(w_r;\tilde\otimes_{j=1}^n E_j)+1/k\leq\alpha(w_r;\tilde\otimes_{j=1}^n W_j)+1/k\] for all $k\in\N$. Thus $\alpha(w_r;\tilde\otimes_{j=1}^n W_j)=\alpha(w_r;\tilde\otimes_{j=1}^n E_j)$.

Analogously, passing through $B_j^{k,r,l}$, we have $\alpha(w_r-w_l;\tilde\otimes_{j=1}^n W_j)=\alpha(w_r-w_l;\tilde\otimes_{j=1}^n E_j)$.

Now, since $w_r\to z$, $(w_r)_r$ is a Cauchy sequence in $(\tilde\otimes_{j=1}^nE_j,\alpha)$ . Then $(w_r)_r$ is Cauchy in $(\tilde\otimes_{j=1}^nW_j,\alpha)$, therefore there exists $w\in(\tilde\otimes_{j=1}^nW_j,\alpha)$ such that $w_r\to w$. We affirm that $w=z$.
Indeed,
\begin{align*}
  \alpha(z-w;\tilde\otimes_{j=1}^nE_j) & \leq\alpha(z-w_r;\tilde\otimes_{j=1}^n E_j)+\alpha(w_r-w;\tilde\otimes_{j=1}^n E_j)\\
  & \leq\alpha(z-w_r;\tilde\otimes_{j=1}^n E_j)+\alpha(w_r-w;\tilde\otimes_{j=1}^n W_j)\to0,
\end{align*}
which concludes the proof.
\end{proof}

The next result asserts that a separable subspace of the tensor product $(\tilde\otimes_{j=1}^nE_j,\alpha)$ can be isometrically embedded in the tensor product of separable subspaces of $E_j$.
This was used, for example in \cite[Theorem 2.9]{CarGal11}, however  everything was much easier there since the tensor norm considered was injective (i.e., it respects subspaces isometrically).

\begin{lemma}
Let $\alpha$ be a finitely generated tensor norm and let $S\subseteq(\tilde\otimes_{j=1}^nE_j,\alpha)$ be a separable subspace. Then exist separable subspaces $W_j\subseteq E_j$ such that $S\overset 1 \hookrightarrow(\tilde\otimes_{j=1}^nW_j,\alpha)$.
\end{lemma}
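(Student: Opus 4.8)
The plan is to leverage the preceding lemma, which already produces separable subspaces $W_j \subseteq E_j$ such that a single convergent sequence of finite type tensors stays isometrically inside $(\tilde\otimes_{j=1}^n W_j, \alpha)$. The difficulty here is that $S$ is an entire separable subspace, not a single element, so I first fix a countable dense subset $\{z^{(m)}\}_{m \in \N}$ of $S$. For each $m$, since $z^{(m)} \in (\tilde\otimes_{j=1}^n E_j, \alpha)$, I can choose a sequence of finite type tensors $(w_r^{(m)})_r$ in $\otimes_{j=1}^n E_j$ converging to $z^{(m)}$. Applying the previous lemma to each such sequence yields separable subspaces $W_j^{(m)} \subseteq E_j$ satisfying properties (1)--(3) for that fixed $m$.

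Next I would take $W_j := \overline{\mathrm{span}}\big[\bigcup_{m \in \N} W_j^{(m)}\big]$ for each $1 \leq j \leq n$. As a closed span of countably many separable subspaces, each $W_j$ is again separable. The point is that every $z^{(m)}$ now lies in $(\tilde\otimes_{j=1}^n W_j, \alpha)$ by property (1) of the lemma applied to the $m$-th sequence, so the countable set $\{z^{(m)}\}_m$ is contained in $(\tilde\otimes_{j=1}^n W_j, \alpha)$. Since this set is dense in $S$ and $(\tilde\otimes_{j=1}^n W_j, \alpha)$ is complete, the closure $S$ itself embeds as a subspace.

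The main obstacle, and the reason the earlier remark stresses that this is harder than the injective case, is verifying that the inclusion $S \hookrightarrow (\tilde\otimes_{j=1}^n W_j, \alpha)$ is \emph{isometric} rather than merely norm-nonincreasing. Because $\alpha$ need not respect subspaces, one must control the norm computed in the smaller tensor product. For a single $z^{(m)}$ this follows from property (2) applied to its approximating sequence: one has $\alpha(w_r^{(m)}; \tilde\otimes_j W_j^{(m)}) = \alpha(w_r^{(m)}; \tilde\otimes_j E_j)$, and since $W_j^{(m)} \subseteq W_j \subseteq E_j$ forces $\alpha(w_r^{(m)}; \tilde\otimes_j W_j)$ to be squeezed between these equal quantities, I get $\alpha(w_r^{(m)}; \tilde\otimes_j W_j) = \alpha(w_r^{(m)}; \tilde\otimes_j E_j)$ for all $r$. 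Passing to the limit in $r$ gives $\alpha(z^{(m)}; \tilde\otimes_j W_j) = \alpha(z^{(m)}; \tilde\otimes_j E_j)$, so the norm is preserved on the dense set $\{z^{(m)}\}_m$.

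Finally I would extend this isometry from the dense set to all of $S$ by continuity: given any $z \in S$, choose $z^{(m)} \to z$ in $(\tilde\otimes_j E_j, \alpha)$; then $(z^{(m)})_m$ is Cauchy in $(\tilde\otimes_j W_j, \alpha)$ as well (by the norm equality just established on differences, which are again in the dense set up to approximation), hence converges to some $w \in (\tilde\otimes_j W_j, \alpha)$, and an argument identical to the final step of the previous lemma shows $w = z$ with $\alpha(z; \tilde\otimes_j W_j) = \alpha(z; \tilde\otimes_j E_j)$. This establishes $S \overset{1}{\hookrightarrow} (\tilde\otimes_{j=1}^n W_j, \alpha)$ and completes the proof.
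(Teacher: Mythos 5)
Your construction is, up to the last step, exactly the paper's: fix a countable dense subset of $S$, apply the preceding lemma to an approximating sequence of finite type tensors for each element, let $W_j$ be the closed span of the countably many resulting separable subspaces, and use the squeeze $W_j^{(m)}\subseteq W_j\subseteq E_j$ together with the metric mapping property to get $\alpha(z^{(m)};\tilde\otimes_{j=1}^n W_j)=\alpha(z^{(m)};\tilde\otimes_{j=1}^n E_j)$ on the dense set. All of that is correct and matches the paper's argument.

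The gap is in your final continuity step. To pass from the dense set to all of $S$ you need $(z^{(m)})_m$ to be Cauchy in $(\tilde\otimes_{j=1}^n W_j,\alpha)$, i.e.\ you need control of $\alpha(z^{(m)}-z^{(m')};\tilde\otimes_{j=1}^n W_j)$; but what you established is norm preservation only at the \emph{individual} points $z^{(m)}$, and a difference $z^{(m)}-z^{(m')}$ lies in $S$ but in general not in $\{z^{(m)}\}_m$. Your parenthetical ``in the dense set up to approximation'' is circular: the only approximation you have is in the norm of $(\tilde\otimes_{j=1}^n E_j,\alpha)$, which is \emph{dominated by} the $W$-norm and hence gives no upper control over it --- bridging precisely that discrepancy is the whole point, since $\alpha$ need not respect subspaces. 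Pointwise isometry on a countable set not closed under subtraction does not yield Cauchyness in the larger norm. Two standard repairs: (i) take the dense set to be a countable $\mathbb{Q}$-linear subspace of $S$ (the rational span of a dense sequence), so that $z^{(m)}-z^{(m')}$ is itself some $z^{(m'')}$ and $\alpha(z^{(m)}-z^{(m')};\tilde\otimes_{j=1}^n W_j)=\alpha(z^{(m)}-z^{(m')};\tilde\otimes_{j=1}^n E_j)\to 0$ immediately; or (ii) do what the paper does, namely estimate $\alpha(z_k-z_l;\tilde\otimes_{j=1}^n W_j)$ by passing through the finite type approximants, $\alpha(z_k-w_r^k;\tilde\otimes W_j)+\alpha(w_r^k-w_r^l;\tilde\otimes W_j)+\alpha(w_r^l-z_l;\tilde\otimes W_j)$, replacing the middle term by its $E$-norm and bounding that by the triangle inequality in the big space. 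Note that route (ii) itself requires the norm equality for the \emph{cross} differences $w_r^k-w_r^l$ with $k\neq l$, which the preceding lemma (applied sequence by sequence, as both you and the paper do) does not literally provide; one should run its construction once over the whole countable family of finite type tensors --- or over its $\mathbb{Q}$-span --- choosing finite dimensional subspaces that compute $\alpha$ up to $1/k$ for every pairwise difference as well. With either repair your proof closes and coincides in substance with the paper's.
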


\begin{proof}

Let $\{z_k\}_k\subseteq S$ be a dense  subset. For each $k\in\N$, consider a sequence of finite type tensors $(w_r^k)_r$ such that $w_r^k\to z_k$ in $(\tilde\otimes_{j=1}^nE_j,\alpha)$. Then, by the previous lemma, there exist separable subspaces $W_j^k\subseteq E_j$ such that $z_k,w_r^k \in(\tilde\otimes_{j=1}^nW_j^k,\alpha)$ for every $k$ and $r$. Moreover, we have $\alpha(z_k;\tilde\otimes_{j=1}^n W_j^k)=\alpha(z_k;\tilde\otimes_{j=1}^n E_j)$, $\alpha(w_r^k;\tilde\otimes_{j=1}^n W_j^k)=\alpha(w_r^k;\tilde\otimes_{j=1}^n E_j)$ and $\alpha(w_r^k-w_l^k;\tilde\otimes_{j=1}^n W_j^k)=\alpha(w_r^k-w_l^k;\tilde\otimes_{j=1}^n E_j)$, for every $k,r,l$.

Take $W_j:=\overline{span[W_j^k : k\in\N]}$ which is also separable. Furthermore, $z_k\in(\tilde\otimes_{j=1}^nW_j,\alpha)$ for all $k\in\N$. Let us see $S\overset 1 \hookrightarrow(\tilde\otimes_{j=1}^nW_j,\alpha)$.
Fix $v \in S$; without loss of generality we can suppose that $\alpha(v - z_k;\tilde\otimes_{j=1}^n E_j) \to 0$, as $k \to \infty$.
Let us see that $\{z_k\}_k$ is a Cauchy sequence in $(\tilde\otimes_{j=1}^nW_j,\alpha)$. Indeed, for every $r$ we have
\begin{align*}
\spadesuit  \;\; \alpha(z_k-z_l;\tilde\otimes_{j=1}^nW_j) & \leq \alpha(z_k-w_r^k;\tilde\otimes_{j=1}^n W_j)+\alpha(w_r^k-w_r^l;\tilde\otimes_{j=1}^n W_j) + \alpha(w_r^l-z_l;\tilde\otimes_{j=1}^n W_j) \\
& = \alpha(z_k-w_r^k;\tilde\otimes_{j=1}^n W_j)+\alpha(w_r^k-w_r^l;\tilde\otimes_{j=1}^n E_j) + \alpha(w_r^l-z_l;\tilde\otimes_{j=1}^n W_j).
\end{align*}
Using the triangle inequality note that the second term $\alpha(w_r^k-w_r^l;\tilde\otimes_{j=1}^n E_j)$ of the last inequality can be bounded by $\alpha(w_r^k-z_k;\tilde\otimes_{j=1}^n E_j) + \alpha(z_k-z_l;\tilde\otimes_{j=1}^n E_j) + \alpha(z_l-w_r^l;\tilde\otimes_{j=1}^n E_j).$
Therefore $\alpha(z_k-z_l;\tilde\otimes_{j=1}^nW_j)$ in $\spadesuit$ goes to zero if $k,l$ are arbitrarily large (all the inequalities hold for every $r$).
Since $\widetilde\otimes_{j=1}^nW_j$ is complete $z_k$ converge (and reasoning as we did in the proof of the previous lemma) $z_k$ must converge to $v$.
Now,
\begin{align*}
\alpha(v;\tilde\otimes_{j=1}^n E_j) \leq \alpha(v;\tilde\otimes_{j=1}^n W_j) & \leq  \alpha(v - z_k;\tilde\otimes_{j=1}^n W_j) + \alpha(z_k;\tilde\otimes_{j=1}^n W_j) \\
& = \alpha(v - z_k;\tilde\otimes_{j=1}^n W_j) + \alpha(z_k;\tilde\otimes_{j=1}^n E_j).
\end{align*}
Since the first term is arbitrarily small as $k$ goes to infinity and the second one converges to $\alpha(v;\tilde\otimes_{j=1}^n E_j)$ we obtain that $\alpha(v;\tilde\otimes_{j=1}^n W_j)=\alpha(v;\tilde\otimes_{j=1}^n E_j)$.
Therefore, $S\overset 1 \hookrightarrow(\tilde\otimes_{j=1}^nW_j,\alpha)$ and this concludes the proof.
\end{proof}

Finally we demonstrate Theorem \ref{RN para tensores}.

\begin{proof}{(of Theorem \ref{RN para tensores}) }

Let us prove $(1) \Rightarrow (2)$ i.e., if $E_1,\dots,E_n,F$ are Asplund spaces, then $(E_1\tilde\otimes\dots\tilde\otimes E_n\tilde\otimes F,\alpha')$ is Asplund.

Take $S\subseteq(E_1\tilde\otimes\dots\tilde\otimes E_n\tilde\otimes F,\alpha')$ a separable subspace. Let us see that $S'$ is separable too. By the previous lemma, there are separable subspaces  $W_j\subseteq E_j$ for $1\leq j\leq n$ and $W_{n+1}\subseteq F$ such that $S\overset 1 \hookrightarrow\big{(}\tilde\otimes_{j=1}^{n+1}W_j, \alpha' \big{)}$. Now, since $E_j$ and $F$ are Asplund, the subspace $W_j'$ is separable for each $j$ and applying Proposition~\ref{separabilidad}, we know that $\U(W_1,\dots,W_n;W_{n+1}')$ is separable. The representation theorem for maximal ideals (see \cite[Theorem 4.5]{FloHun02}) tell us that $\U(W_1,\dots,W_n;W_{n+1}')\overset 1 =(\tilde\otimes_{j=1}^{n+1}W_j;\alpha')'$,
 which implies that $(\tilde\otimes_{j=1}^{n+1}W_j,\alpha')'$ is separable and $S'$ is separable too since $(\tilde\otimes_{j=1}^{n+1}W_j,\alpha')'\overset 1 \twoheadrightarrow S'$.

The other implications follow easily using the representation theorem for maximal ideals and the fact that a Banach space is Asplund if and only if its dual has the Radon-Nikodým property.
\end{proof}


\section{Applications and examples}\label{applications}

In this section we apply the previous results to some classical multilinear ideals. We start with the ideal of extendible multilinear operators $\E$. Recall that a multilinear operator $T :E_1\times\dots\times E_n\rightarrow F$ is \emph{extendible}
if for any Banach space $G_j$ containing $E_j$ there exists
$\widetilde T\in \lg(G_1,\dots,G_n;F)$ an extension of $T$. The space of all such multilinear operators is denoted by $\E(E_1,\dots,E_n;F)$ and it is endowed with the norm $\Vert T\Vert _{\E(E_1,\dots,E_n;F)}$ defined by
$$\begin{array}{rl}
\|T\|_\E:=\inf \{c>0:  & \mbox{ for
all }G_j\supset E_j
\mbox{ there is an extension of }T\mbox{ to }G_1\times\dots\times G_n  \\
& \mbox{ with norm} \leq c\}.
\end{array}$$

In order to use our previous results, we need a proposition first.

\begin{proposition}\label{lemma extendible}
The ideal $\E$ is extendible and $F'$-Arens stable for every dual space $F'$. In addition, if $G$ is a Banach space which contains no copy of $c_0$, then $\E$ has the $G$-RNp.
\end{proposition}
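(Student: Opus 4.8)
The plan is to establish the three claims of Proposition~\ref{lemma extendible} in turn, since each addresses a hypothesis needed to apply the main coincidence theorems.

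First I would verify that $\E$ is extendible. This is essentially tautological from the definition: given superspaces $G_j \supset E_j$ and $T \in \E(E_1,\dots,E_n;F)$, the defining norm $\|T\|_\E$ guarantees, for every $\varepsilon>0$, an extension to $G_1\times\dots\times G_n$ with norm at most $\|T\|_\E(1+\varepsilon)$. The content to check is that one can produce a \emph{single} extension $\widetilde T \in \E(G_1,\dots,G_n;F)$ whose own extendible norm equals $\|T\|_\E$; this follows because any further superspace $H_j \supset G_j \supset E_j$ is also a superspace of $E_j$, so the extendibility data for $T$ transfers to $\widetilde T$, giving $\|\widetilde T\|_\E \le \|T\|_\E$, while the reverse inequality is immediate from restriction. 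A standard argument (e.g.\ extending into an injective superspace such as $\ell_\infty(B_{G_j'})$, which is isometrically universal) produces the desired extension.

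Next I would treat the $F'$-Arens stability. The key point is that $\E$ is a maximal ideal, and the excerpt already records that maximal ideals of multilinear operators are $F'$-Arens stable for every dual space $F'$ (via the Extension Lemma 13.2 of \cite{DefFlo93}). Thus I would first argue that $\E$ is maximal --- this is part of its standard description as the maximal ideal associated to the tensor norm $\varepsilon$ projectively hulled in the first $n$ variables --- and then simply invoke that cited fact. Concretely, one checks that each Arens extension $Ext_k(T)$ of an extendible $T$ lands back in $F'$ and preserves the $\E$-norm, which is exactly what the maximality-plus-dual-range machinery provides; the weak-star density of $B_{E_k}$ in $B_{E_k''}$ together with the goursat-type estimate from the metric mapping property gives the isometry.

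The main obstacle, and the substantive part, is the $G$-RNp when $G$ contains no copy of $c_0$. Here the plan is to invoke Proposition~\ref{teo cociente}: it suffices to prove the single countable coincidence
\[
(\ell_1 \widetilde\otimes \dots \widetilde\otimes \ell_1 \widetilde\otimes G, \alpha) \overset 1 \twoheadrightarrow \E(c_0,\dots,c_0;G),
\]
since the no-copy-of-$c_0$ hypothesis then upgrades this to the full $G$-RNp for arbitrary index sets. For this countable case I would exploit that on $c_0$-domains extendibility collapses: since $G$ contains no copy of $c_0$, Proposition~\ref{bogda} gives $\lg(^nc_0;G)=\lg_{app}(^nc_0;G)$, and the extendible norm on $c_0$-spaces agrees with the ambient $\lg$-norm (as $c_0 \overset 1 \hookrightarrow \ell_\infty$ and every operator extends injectively), so every element of $\E(c_0,\dots,c_0;G)$ is approximable and hence lies in $\E^{min}$. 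Combined with the representation Theorem~\ref{representation theorem minimal}, which already furnishes the norm-one quotient $(\ell_1 \widetilde\otimes \dots \widetilde\otimes \ell_1 \widetilde\otimes G,\alpha) \overset 1 \twoheadrightarrow \E^{min}(c_0,\dots,c_0;G)$, this yields the required metric surjection onto all of $\E(c_0,\dots,c_0;G)$. The delicate point I expect to spend the most care on is confirming that the approximable and extendible norms coincide isometrically on $c_0$-domains (not merely that the underlying spaces coincide as sets), so that the quotient map is genuinely norm-one rather than merely bounded.
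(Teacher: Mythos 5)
Your skeleton coincides with the paper's: extendibility of $\E$ is taken as essentially definitional, $F'$-Arens stability is quoted from the standard dual-range machinery, and the $G$-RNp is reduced via Proposition~\ref{teo cociente} to the countable case, which is settled by Proposition~\ref{bogda} together with an identification of $\E$ with $\lg$ on $c_0$-domains. (For Arens stability the paper cites the proof of \cite[Theorem 3.6]{Car99} together with the complementation of a dual space in its bidual; your maximality route is the same mechanism in disguise, but your flat claim that ``$\E$ is a maximal ideal'' should be weakened: as a vector-valued ideal with arbitrary target $\E$ need not coincide with $\E^{max}$ --- the paper only ever uses $\E(E_1,\dots,E_n;F')=\E^{max}(E_1,\dots,E_n;F')$ for dual targets, which is precisely what the complementation of $F'$ in $F'''$ provides.)

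The genuine gap is at the step you yourself flag as delicate. Your justification for the key identification --- ``the extendible norm on $c_0$-domains agrees with the ambient $\lg$-norm (as $c_0 \overset 1 \hookrightarrow \ell_\infty$ and every operator extends injectively)'' --- fails as stated: injectivity of $\ell_\infty$ extends linear maps \emph{into} $\ell_\infty$, not multilinear maps defined on $c_0$, and if your parenthetical were a valid general principle it would yield $\E(^nc_0;G)=\lg(^nc_0;G)$ for \emph{every} $G$, which is false. Indeed, for $G=c_0$ the operator $T(x_1,\dots,x_n)=x_1(1)\cdots x_{n-1}(1)\,x_n$ is bounded but not extendible, since an extension to $\ell_\infty$ in the last variable would produce a bounded projection of $\ell_\infty$ onto $c_0$, contradicting Phillips' theorem. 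So the no-copy-of-$c_0$ hypothesis must enter at exactly this point, not only through Proposition~\ref{bogda}. The paper's mechanism is the $\lg_\infty$-structure of $c_0$: finite-dimensional subspaces of $c_0$ sit $(1+\varepsilon)$-inside the $1$-injective spaces $\ell_\infty^m$, on which multilinear operators extend with the same norm by composing with norm-one retractions; this local argument yields both $\E(^nc_0;G)=\lg(^nc_0;G)$ and --- what your sketch does not address, although the metric surjection onto $\E(^nc_0;G)$ hinges on it --- the isometry $\E^{min}(^nc_0;G)=\lg_{app}(^nc_0;G)$, after which Theorem~\ref{representation theorem minimal} and Propositions~\ref{teo cociente} and \ref{bogda} conclude as you intend. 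If you want to stay close to your route, the equality $\E(^nc_0;G)=\lg(^nc_0;G)$ with equal norms can be repaired: by Proposition~\ref{bogda} each $T\in\lg(^nc_0;G)$ is approximable, its iterated Arens extension to $(\ell_\infty)^n=(c_0'')^n$ is a uniform limit of finite-type $G$-valued operators, hence $G$-valued and of the same norm, and norm-one extensions $H_i\to\ell_\infty$ of the embeddings $c_0\hookrightarrow\ell_\infty$ (the correct use of injectivity) then give $\|T\|_{\E}=\|T\|$; but the minimal-kernel isometry above still requires the local $\lg_\infty$ argument, so your quotient map is not yet shown to be norm-one.
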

\begin{proof}
By definition, it is trivial to see that this ideal is extendible. It is also well known that $\E$ is $F'$-Arens stable (this can be obtained following the proof of \cite[Theorem 3.6]{Car99} and the fact that every dual space is complemented in its bidual).

Since $c_0$ is an $\lg_\infty$-space, we have $\E(^nc_0;G)=\lg(^nc_0;G)$ and $\E^{min} (^nc_0;G)=\lg_{app}(^nc_0;G)$, the result now follows from Proposition~\ref{teo cociente} and Proposition~\ref{bogda}.
\end{proof}

If we call $\alpha_{ext}$ the tensor norm associated to $\E$, we obtain the next corollary.

\begin{corollary}\label{bases en extendibles}
$ $
\begin{itemize}
\item[(1)] If $E_1, \dots, E_n$ are Asplund spaces and $F'$ is a dual space which contains no copy of $c_0$, then
$$ (E_1' \widetilde\otimes \dots \widetilde\otimes E_n' \widetilde\otimes F', \alpha_{ext}) \overset 1 \twoheadrightarrow \E(E_1,\dots,E_n;F').$$
In particular, $ \E^{min}(E_1,\dots,E_n;F')\overset 1 = \E(E_1,\dots,E_n;F').$
\item[(2)] If $F'$ is a dual space which contains no copy of $c_0$ and $E_1',\dots,E_n', F'$ have  bases $(e_{j_1}')_{j_1},\dots, (e_{j_n}')_{j_n}, (f_l')_l$ respectively, then the monomials
         \[\big(\ e_{j_1}'(\ \cdot \ )\cdots e_{j_n}'(\ \cdot \ )\cdot f_l'\ \big)_{j_1,\dots,j_n,l}\]
         with the square ordering form a Schauder basis of $\E(E_1,\dots,E_n;F')$.
\item[(3)] If $F'$ is a dual space which contains no copy of $c_0$ and $E_1,\dots,E_n, F$ have shrinking Schauder bases $(e_{j_1})_{j_1},\dots, (e_{j_n})_{j_n}, (f_l)_l$ respectively, then the monomials (associated to the coordinate functionals)
         \[\big(\ e_{j_1}'(\ \cdot \ )\cdots e_{j_n}'(\ \cdot \ )\cdot f_l'\ \big)_{j_1,\dots,j_n,l}\]
         with the square ordering form a boundedly complete Schauder basis of the space $\E(E_1,\dots,E_n;F')$.
\end{itemize}

\end{corollary}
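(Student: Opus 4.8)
The plan is to deduce all three items of Corollary \ref{bases en extendibles} directly from the general machinery developed in Section \ref{main results}, using Proposition \ref{lemma extendible} to verify the hypotheses. The key observation is that Proposition \ref{lemma extendible} establishes exactly what is needed: $\E$ is extendible, is $F'$-Arens stable for every dual space $F'$, and enjoys the $G$-RNp whenever $G$ contains no copy of $c_0$. Thus, whenever the target space is a dual $F'$ with no copy of $c_0$, the ideal $\E$ satisfies all the running assumptions of our coincidence and basis theorems with $G = F'$.

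For item $(1)$, I would simply apply Theorem \ref{Lewis theorem} with the roles filled by $\E \sim \alpha_{ext}$ and target space $F'$: since $E_1, \dots, E_n$ are Asplund, $\E$ is $F'$-Arens stable and extendible, and $\E$ has the $F'$-RNp (because $F'$ contains no copy of $c_0$), the theorem yields the quotient mapping $(E_1' \widetilde\otimes \dots \widetilde\otimes E_n' \widetilde\otimes F', \alpha_{ext}) \overset 1 \twoheadrightarrow \E(E_1,\dots,E_n;F')$ and the isometric identification $\E^{min} \overset 1 = \E$. For item $(2)$, the hypotheses that $E_1',\dots,E_n',F'$ have bases force the $E_j$ to be Asplund (a space with a shrinking-dual basis, or more simply whose dual is separable via the basis, is Asplund) and provide the bounded approximation property on all the relevant spaces; item $(1)$ of Theorem \ref{bases ideales} then gives the monomial Schauder basis with the square ordering directly. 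Item $(3)$ is the analogue obtained from item $(2)$ of Theorem \ref{bases ideales}: shrinking bases on the $E_j$ make them Asplund, and one applies Theorem \ref{teo achicante} with $F''$ in place of the target, noting that $\E$ is $F''$-Arens stable (as $F''$ is a dual space) and has the $F''$-RNp when $F'$ has no copy of $c_0$.

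The one genuinely nonroutine point, and what I expect to be the main obstacle, is the \emph{boundedly complete} assertion at the end of item $(3)$. Mere existence of the monomial basis follows formally from the theorems cited, but bounded completeness is an extra geometric property. I would obtain it by identifying $\E(E_1,\dots,E_n;F')$ as a dual space: since $F'$ is a dual and the $E_j$ carry shrinking bases, the isometry $\E(E_1,\dots,E_n;F') \overset 1 = \E^{min}(E_1,\dots,E_n;F') \overset 1 = (E_1' \widetilde\otimes \dots \widetilde\otimes E_n' \widetilde\otimes F', \alpha_{ext})$ should exhibit this space as the dual of the predual tensor product $(E_1 \widetilde\otimes \dots \widetilde\otimes E_n \widetilde\otimes F, \alpha_{ext}')$, on which the monomials associated to the primal bases $(e_{j_i})$ form a shrinking basis. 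A basis of a space is shrinking precisely when the coordinate-functional basis of the dual is boundedly complete; hence the dual monomials in $\E(E_1,\dots,E_n;F')$ inherit bounded completeness.

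To make this rigorous I would first check that the monomials $(e_{j_1} \otimes \dots \otimes e_{j_n} \otimes f_l)$ form a shrinking basis of $(E_1 \widetilde\otimes \dots \widetilde\otimes E_n \widetilde\otimes F, \alpha_{ext}')$, which in turn follows from the shrinking property of the individual bases together with the standard Gelbaum--Gil de Lamadrid construction cited before Theorem \ref{bases ideales}, and then invoke the duality between shrinking and boundedly complete bases. The delicate bookkeeping is ensuring that the square ordering on $\mathbb{N}^{n+1}$ is compatible under this duality, so that the ordered dual basis is exactly the monomials written in item $(3)$; this is where I would spend the most care, though it is ultimately a matter of matching orderings rather than proving anything deep.
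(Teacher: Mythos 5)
Your treatment of items (1) and (2) is correct and coincides with the paper's proof: (1) is exactly Proposition \ref{lemma extendible} plus Theorem \ref{Lewis theorem} (and your remark that separability of the $E_j'$ forces the $E_j$ to be Asplund is the right way to see that the hypotheses of Theorem \ref{bases ideales}(1) are met). For item (3), however, your argument has a genuine gap at the step where you establish shrinkingness of the monomial basis of the predual tensor product. You claim it ``follows from the shrinking property of the individual bases together with the standard Gelbaum--Gil de Lamadrid construction,'' but that construction only yields uniformly bounded monomial projections; shrinkingness does \emph{not} pass to tensor products in general. For instance, $\ell_2$ has a shrinking basis, yet the monomial basis of $\ell_2\widetilde\otimes_\pi\ell_2$ is not shrinking, since its dual $\lg(\ell_2;\ell_2)$ is nonseparable. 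In the paper the logic runs in the opposite direction: shrinkingness of the monomial basis is a \emph{consequence} of the dual identification, not an input. Concretely, since the target is a dual space one has $\E \overset{1}{=} \E^{max}$, and the representation theorem for maximal ideals gives $\E(E_1,\dots,E_n;F')\overset{1}{=}(E_1\widetilde\otimes\dots\widetilde\otimes E_n\widetilde\otimes F,\alpha_{ext}')'$; combining this with item (1) and the representation theorem for minimal ideals (Theorem \ref{representation theorem minimal}, the shrinking bases supplying the bounded approximation property on $E_1',\dots,E_n',F'$) yields $(E_1\widetilde\otimes\dots\widetilde\otimes E_n\widetilde\otimes F,\alpha_{ext}')'\overset{1}{=}(E_1'\widetilde\otimes\dots\widetilde\otimes E_n'\widetilde\otimes F',\alpha_{ext})$. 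The monomial basis of the right-hand side consists precisely of the coordinate functionals of the monomial basis of the predual; a basis whose coordinate functionals form a basis of the dual is shrinking, and the dual basis is then boundedly complete. Note that the maximal-ideal representation theorem is the missing ingredient in your sketch: your chain of isometries through $\E^{min}$ alone does not exhibit $\E(E_1,\dots,E_n;F')$ as a dual space.

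A second, independent error is your detour through Theorem \ref{teo achicante}: the claim that $\E$ ``has the $F''$-RNp when $F'$ has no copy of $c_0$'' fails, because containment of $c_0$ does not transfer between $F'$ and $F''$. Take $F=c_0$, which has a shrinking basis: then $F'=\ell_1$ contains no copy of $c_0$, yet $F''=\ell_\infty$ does. (Moreover, since the target space in item (3) is $F'$, an application of Theorem \ref{teo achicante} would actually involve hypotheses on $F'''$, not $F''$.) This detour is also unnecessary: the mere existence of the monomial Schauder basis in item (3) already follows from item (2), applied to the coordinate-functional bases $(e_{j_i}')$ and $(f_l')$, which exist because the given bases of $E_1,\dots,E_n,F$ are shrinking; the only additional content of item (3) is bounded completeness, which is obtained by the duality argument above.
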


\begin{proof}
Item $(1)$ can be deduced using the Proposition \ref{lemma extendible} and Theorem \ref{Lewis theorem}.
Item $(2)$ follows from the first part of Theorem~\ref{bases ideales}. To prove item $(3)$ first note that since the range space is a dual space, by the representation theorem of maximal ideals~\cite[Theorem 4.5]{FloHun02}, we have
 \[\E(E_1,\dots,E_n;F')\overset 1 = \E^{max}(E_1,\dots,E_n;F')\overset 1 =(E_1\tilde\otimes\dots\tilde\otimes E_n\tilde\otimes F,\alpha_{ext}')'.\]
 Now, since the spaces have shrinking bases, then $E_1',\dots,E_n'$ and $F'$ have the bounded approximation property and by the representation theorem of minimal ideals~\ref{representation theorem minimal}, using the first part of this corollary, we have  that
 \[\E(E_1,\dots,E_n;F')\overset 1 = \E^{min}(E_1,\dots,E_n;F')\overset 1 = (E_1'\tilde\otimes\dots\tilde\otimes E_n'\tilde\otimes F', \alpha_{ext}).\]
 In conclusion, we get $(E_1\tilde\otimes\dots\tilde\otimes E_n\tilde\otimes F,\alpha_{ext}')'\overset 1 = (E_1'\tilde\otimes\dots\tilde\otimes E_n'\tilde\otimes F', \alpha_{ext}).$ Notice that the basis $(e_{j_1}'\otimes\dots\otimes e_{j_n}'\otimes f_l')_{j_1,\dots,j_n,l}$ of $(E_1\tilde\otimes\dots\tilde\otimes E_n\tilde\otimes F,\alpha_{ext}')'$ is formed by the coordinate functionals associated to the basis $(e_{j_1}\otimes\dots\otimes e_{j_n}\otimes f_l)_{j_1,\dots,j_n,l}$ of $(E_1\tilde\otimes\dots\tilde\otimes E_n\tilde\otimes F,\alpha_{ext}')$, therefore the result follows.
\end{proof}

Using Theorem \ref{teo achicante} and reasoning in a similar manner, we get:

\begin{corollary}\label{coro monomials}
If $E_1,\dots,E_n$ have shrinking bases, $F''$ has the bounded approximation property and contains no copy of $c_0$, then
\[(E_1'\tilde\otimes\dots\tilde\otimes E_n'\tilde\otimes F,\alpha_{ext})\overset 1 =\E^{min}(E_1,\dots,E_n;F)\overset 1 =\E(E_1,\dots,E_n;F).\]
In particular, if $F$ has also a basis then the monomials form a basis of $\E(E_1,\dots,E_n;F)$.
\end{corollary}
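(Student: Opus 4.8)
The plan is to deduce Corollary~\ref{coro monomials} directly from Theorem~\ref{teo achicante} by verifying that the ideal $\E$ of extendible multilinear operators satisfies all of its hypotheses, with $F''$ playing the role of the relevant target space. Recall that Theorem~\ref{teo achicante} requires: (i) $\U$ extendible; (ii) $\U$ is $F''$-Arens stable; (iii) $\U$ has the $F''$-RNp; (iv) $E_1,\dots,E_n$ have shrinking bases; and (v) $F''$ has the bounded approximation property. First I would check (i) and (ii): extendibility of $\E$ is immediate from its definition, and since $F''$ is a dual space, Proposition~\ref{lemma extendible} tells us that $\E$ is $F'$-Arens stable for \emph{every} dual space, in particular for $F''=(F')'$. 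The hypotheses on $E_1,\dots,E_n$ (shrinking bases) give (iv) directly, and (v) is assumed outright.

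The only genuine point to verify is (iii), the $F''$-RNp. Here I would invoke the last assertion of Proposition~\ref{lemma extendible}: if a Banach space $G$ contains no copy of $c_0$, then $\E$ has the $G$-RNp. So it suffices to observe that $F''$ contains no copy of $c_0$. This is where the stated hypothesis ``$F''$ contains no copy of $c_0$'' enters, so (iii) follows immediately. With all five hypotheses of Theorem~\ref{teo achicante} in place (taking $\U=\E$ and the target data $F$ with bidual $F''$), the theorem yields the chain of isometric identifications
\[(E_1'\tilde\otimes\dots\tilde\otimes E_n'\tilde\otimes F,\alpha_{ext})\overset 1 =\E^{min}(E_1,\dots,E_n;F)\overset 1 =\E(E_1,\dots,E_n;F),\]
which is exactly the displayed conclusion.

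For the final sentence about the monomial basis, I would argue as in item (2) of Theorem~\ref{bases ideales}: once $\E(E_1,\dots,E_n;F)$ is isometrically identified with the completed tensor product $(E_1'\tilde\otimes\dots\tilde\otimes E_n'\tilde\otimes F,\alpha_{ext})$, it remains to note that whenever $E_1',\dots,E_n'$ and $F$ all have Schauder bases, the monomials with the square ordering of Gelbaum--Gil de Lamadrid form a Schauder basis of that tensor product. Since $E_1,\dots,E_n$ have shrinking bases, their coordinate functionals furnish bases of $E_1',\dots,E_n'$, and the added hypothesis that $F$ has a basis completes the requirement; transporting this basis through the isometric isomorphism gives the monomial basis of $\E(E_1,\dots,E_n;F)$.

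I do not anticipate a serious obstacle, as this is essentially a verification that the abstract hypotheses of Theorem~\ref{teo achicante} specialize correctly to $\E$. The subtlest point is the Arens stability: one must be careful that $\E$ is asserted to be $F'$-Arens stable only for dual target spaces, and here we apply it with $F''$, which is indeed a dual space, so the hypothesis is met. The role of the ``no copy of $c_0$'' condition is precisely to secure the $F''$-RNp through Proposition~\ref{lemma extendible}, and the bounded approximation property of $F''$ is what upgrades the quotient mapping of Theorem~\ref{Lewis theorem} to the isometric identification with the full tensor product.
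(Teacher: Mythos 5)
Your proposal is correct and follows essentially the same route as the paper: the authors obtain Corollary~\ref{coro monomials} precisely by applying Theorem~\ref{teo achicante} to $\U=\E$, using Proposition~\ref{lemma extendible} to secure extendibility, $F''$-Arens stability (since $F''$ is a dual space) and the $F''$-RNp from the ``no copy of $c_0$'' hypothesis, with the monomial basis then following as in Theorem~\ref{bases ideales}. Your verification of the hypotheses, including the observation that the shrinking bases of $E_1,\dots,E_n$ yield bases of $E_1',\dots,E_n'$ via the coordinate functionals, matches the paper's intended argument.
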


A natural and important question about an ideal is if it preserves some Banach space
property. The next two results shed some light in this direction. The first one is a consequence of Proposition \ref{separabilidad}.

\begin{corollary}\label{coro separa}
Let $E_1, \dots, E_n, F$ be Banach spaces with separable duals.
Then the space $\E(E_1,\dots,E_n;F')$ is separable.
\end{corollary}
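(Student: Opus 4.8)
The plan is to deduce Corollary \ref{coro separa} directly from Proposition \ref{separabilidad} applied to the ideal $\E$, after checking that all the hypotheses of that proposition are met in the present setting. Recall that Proposition \ref{separabilidad} asserts that if $\U \sim \alpha$ is an $G$-Arens stable extendible ideal with the $G$-RNp, and $E_1',\dots,E_n'$ together with the target space $G$ are separable, then $\U(E_1,\dots,E_n;G)$ is separable. Here we want to take $\U = \E$ and $G = F'$, so I first need to verify the structural conditions on $\E$ and then the separability conditions on the spaces.

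First I would record that $\E$ is an extendible ideal, which is immediate from the definition of extendibility, and that $\E$ is $F'$-Arens stable for every dual space $F'$; both of these facts are exactly the content of Proposition \ref{lemma extendible}. Next, since the spaces $E_1,\dots,E_n,F$ all have separable duals, each $E_i'$ is separable and $F'$ is separable as well. A space with separable dual contains no copy of $c_0$ (indeed $c_0'=\ell_1$ is nonseparable, so a copy of $c_0$ would force a nonseparable dual); hence $F'$, being a separable dual, contains no copy of $c_0$. By the third assertion of Proposition \ref{lemma extendible}, this guarantees that $\E$ has the $F'$-RNp. Thus $\E$ satisfies every hypothesis required to be a $G$-Arens stable extendible ideal with the $G$-RNp, for the choice $G = F'$.

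With these verifications in hand, I would simply invoke Proposition \ref{separabilidad} with $\U = \E$, $\alpha = \alpha_{ext}$ and target space $F'$. The remaining separability hypotheses of that proposition, namely that each $E_i'$ is separable and that the target $F'$ is separable, follow from the assumption that $E_1,\dots,E_n,F$ all have separable duals. Proposition \ref{separabilidad} then yields directly that $\E(E_1,\dots,E_n;F')$ is separable, which is exactly the claimed statement.

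I do not anticipate a genuine obstacle here, as the result is a clean specialization of the machinery already developed. The only point requiring a small remark is the implication ``separable dual $\Rightarrow$ no copy of $c_0$'', which is needed to bridge the stated hypotheses to the $F'$-RNp via Proposition \ref{lemma extendible}; once that is noted, everything else is a direct substitution into Proposition \ref{separabilidad}.
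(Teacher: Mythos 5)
Your route is exactly the paper's: the corollary is obtained by feeding Proposition \ref{lemma extendible} into Proposition \ref{separabilidad} with $\U=\E$ and target space $F'$. However, your justification of the one nontrivial bridging step --- that $F'$ contains no copy of $c_0$ --- is wrong as written. You claim ``$c_0'=\ell_1$ is nonseparable, so a copy of $c_0$ would force a nonseparable dual,'' but $\ell_1$ \emph{is} separable, so no contradiction arises from that argument. Worse, the general principle you invoke, ``separable dual $\Rightarrow$ no copy of $c_0$,'' is simply false: $c_0$ itself has separable dual $\ell_1$ and obviously contains a copy of $c_0$. So the step, as you argue it, fails.

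What rescues the proof is a property you did not use: $F'$ is not merely a space with separable dual, it is itself a \emph{separable dual space}. By the Bessaga--Pe{\l}czy{\'n}ski theorem, a dual space that contains an isomorphic copy of $c_0$ must contain a copy of $\ell_\infty$ (see \cite{AlbKal06}), and $\ell_\infty$ is nonseparable; hence the separable dual $F'$ contains no copy of $c_0$. Alternatively, separable dual spaces have the Radon-Nikod\'ym property, this property passes to closed subspaces, and $c_0$ fails it --- either argument closes the gap. Once this is corrected, the remainder of your proposal (extendibility and $F'$-Arens stability of $\E$ from Proposition \ref{lemma extendible}, the $F'$-RNp from its last assertion, separability of each $E_i'$ and of $F'$ from the hypotheses, then Proposition \ref{separabilidad}) is precisely the deduction the paper intends when it calls the corollary ``a consequence of Proposition \ref{separabilidad}.''
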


Bearing in mind that $\E^{max}(E_1,\dots,E_n;F')=\E(E_1,\dots,E_n;F')$ (since the range space is a dual space), the following corollary can be obtained using Proposition \ref{lemma extendible} and Theorem~\ref{RN para tensores}.

\begin{corollary}\label{coro asplund}
Let $E_1, \dots, E_n,F$ be Banach spaces. The following are equivalent:
\begin{itemize}
\item [(1)] The spaces $E_1,\dots, E_n, F$ are Asplund.
\item [(2)] The space  $(E_1\tilde\otimes\dots\tilde\otimes E_n\tilde\otimes F,\alpha_{ext}')$ is Asplund.
\item[(3)] The space $\E(E_1,\dots ,  E_n, F')$ has the Radon-Nikodým property.
\end{itemize}
\end{corollary}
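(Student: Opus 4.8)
The plan is to obtain the corollary as a direct specialization of Theorem~\ref{RN para tensores}. That theorem takes an extendible \emph{maximal} ideal $\U\sim\alpha$ enjoying the $F'$-RNp for every separable dual space $F'$, and produces exactly the threefold equivalence we are after, with $\alpha'$ appearing in item~(2) and $\U(E_1,\dots,E_n;F')$ in item~(3). Since $\E$ need not be maximal, I would apply the theorem to its maximal hull $\E^{max}$ and then transfer the conclusion back to $\E$ via the coincidence $\E^{max}(E_1,\dots,E_n;F')\overset1=\E(E_1,\dots,E_n;F')$, which holds precisely because the range is a dual space.

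First I would verify that $\E^{max}$ meets every hypothesis. It is maximal by construction, and it shares the associated tensor norm $\alpha_{ext}$ with $\E$ (the maximal hull does not alter the finite-dimensional behaviour that determines the associated tensor norm). Since extendibility of an ideal is equivalent to projectivity of its tensor norm in the first $n$ coordinates, as recalled in the proof of Theorem~\ref{Lewis theorem}, and this is a property of $\alpha_{ext}$ alone, $\E^{max}$ is again extendible. In particular $\alpha=\alpha_{ext}$, so item~(2) of Theorem~\ref{RN para tensores} is exactly the Asplundness of $(E_1\tilde\otimes\dots\tilde\otimes E_n\tilde\otimes F,\alpha_{ext}')$.

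The substantive point is to supply the $F'$-RNp for separable dual spaces $F'$. Here I would invoke the classical fact that a separable dual space contains no copy of $c_0$: if $c_0$ embedded in a dual space, then so would $\ell_\infty$, contradicting separability. By Proposition~\ref{lemma extendible}, $\E$ therefore has the $F'$-RNp, and since $F'$ is a dual space the defining quotient lands in $\E(c_0(J_1),\dots,c_0(J_n);F')=\E^{max}(c_0(J_1),\dots,c_0(J_n);F')$, so $\E^{max}$ inherits the $F'$-RNp as well.

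With all hypotheses in place, Theorem~\ref{RN para tensores} applied to $\U=\E^{max}$ yields the equivalence of: $E_1,\dots,E_n,F$ being Asplund; $(E_1\tilde\otimes\dots\tilde\otimes E_n\tilde\otimes F,\alpha_{ext}')$ being Asplund; and $\E^{max}(E_1,\dots,E_n;F')$ having the Radon-Nikod\'ym property. Rewriting the last item as $\E(E_1,\dots,E_n;F')$ gives the corollary. I expect the main obstacle to be the bookkeeping around maximality: one must check carefully that the $F'$-RNp, established for $\E$, transfers to the maximal hull, and that the coincidence $\E=\E^{max}$ on dual ranges is exactly what keeps items~(2) and~(3) consistent between the two ideals.
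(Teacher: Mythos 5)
Your proposal is correct and takes essentially the same route as the paper, whose proof is exactly the remark that $\E^{max}(E_1,\dots ,E_n;F')\overset 1 = \E(E_1,\dots ,E_n;F')$ since the range is a dual space, combined with Proposition \ref{lemma extendible} and Theorem \ref{RN para tensores}. You simply make explicit the details the paper leaves implicit: that a separable dual contains no copy of $c_0$ (so Proposition \ref{lemma extendible} supplies the $F'$-RNp), and that the maximal hull shares the tensor norm $\alpha_{ext}$, hence extendibility and the RNp quotient, with $\E$.
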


Now we deal with the ideal of Pietsch-integral multilinear operators.  Recall that a  multilinear operator $T\in \lg(E_1,\dots,E_n;F)$ is \emph{Pietsch integral (Grothendieck integral)} if there exists a regular $F$-valued ($F''$-valued) Borel
measure $\mu$, of bounded variation on $(B_{E_1'}\times\dots\times B_{E_n'}, w^*)$  such that
$$ T(x_1,\dots,x_n) =\int_{B_{E_1'}\times\dots\times B_{E_n'}} (x_1'(x_1))\cdots (x_n'(x_n)) \ d\mu(x_1',\dots,x_n')$$ for every $x_k\in E_k.$
The spaces of Pietsch integral and Grothendieck integral $n$-linear operators  are
denoted by $P\I(E_1,\dots,E_n;F)$ and $G\I(E_1,\dots,E_n;F)$ respectively and the integral norm of a multilinear operator $T$ is defined as $\inf\{ \|\mu\| \}$, where the infimum runs over all the measures $\mu$ representing $T$.

We now describe the minimal kernel of $P\I$ namely, the space of nuclear multilinear operators $\Nu$. A multilinear operator $T\in\lg(E_1,\dots,E_n;F)$ is  \emph{nuclear} if it can be written as $T(x_1,\dots,x_n)=\sum_{j \in \N} \lambda_j
(x_1^j)'(x_1)\cdots(x_n^j)'(x_n)\cdot f_j,$ where $\lambda_j \in \mathbb{K}$,  $(x_k^j)' \in E_k'$, $f_j\in F$ for
all $j$ and $\sum_{j \in \N} |\lambda_j|\cdot \Vert (x_1^j)' \Vert\cdots \|(x_n^j)'\|\cdot \|f_j\|\ <\infty $. The space of nuclear $n$-linear operators is denoted by $\Nu(E_1,\dots,E_n;F)$ and it is a Banach space under the norm
\[ \Vert T\Vert _{\Nu(E_1,\dots,E_n;F)}=\inf \left\{\sum_{j \in \N} |\lambda_j|\cdot \Vert (x_1^j)' \Vert\cdots \|(x_n^j)'\|\cdot \|f_j\|\,\right\}
,\]
where the infimum is taken over all representations of $T$ as
above.

The isometry between the space of Pietsch integral and the space of nuclear multilinear forms on Asplund spaces can be found in \cite{Al85a}. This result and their consequences relied heavily on the theory of vector measures (e.g., \cite{DieUhl77}). We reprove this statement using a quite different perspective: we deal with a systematic scheme on tensor products instead.

The following proposition will allow us to be in the conditions of Theorem \ref{Lewis theorem}.

\begin{proposition}\label{lemma integral}
The ideal $P\I$ is Arens stable, extendible and has the vector-RNp.
\end{proposition}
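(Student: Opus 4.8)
The plan is to read off all three properties from the integral representation
\[
T(x_1,\dots,x_n)=\int_{K}x_1'(x_1)\cdots x_n'(x_n)\,d\mu(x_1',\dots,x_n'),\qquad K=(B_{E_1'},w^*)\times\dots\times(B_{E_n'},w^*),
\]
where $\mu$ is the representing $F$-valued regular Borel measure of bounded variation and $\|T\|_{P\I}=\inf\|\mu\|$. For \emph{extendibility}, given superspaces $G_j\supset E_j$ the restriction maps $\rho_j\colon(B_{G_j'},w^*)\to(B_{E_j'},w^*)$ are continuous surjections by Hahn--Banach, so the product map $\rho$ induces an isometric embedding $C(K)\hookrightarrow C(\prod_jB_{G_j'})$ and hence a metric surjection between the associated spaces of $F$-valued measures. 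Lifting $\mu$ to $\tilde\mu$ on $\prod_jB_{G_j'}$ with $\rho_*\tilde\mu=\mu$ and $\|\tilde\mu\|\le\|\mu\|+\varepsilon$, the operator $\tilde T(y_1,\dots,y_n)=\int\widehat{y_1}\cdots\widehat{y_n}\,d\tilde\mu$ is a Pietsch-integral extension of $T$; since restriction never increases the integral norm, this yields a norm-preserving extension (alternatively one cites the known extendibility of $P\I$).

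For \emph{Arens stability} I would compute the $k$-th Arens extension explicitly. Using the representation, a direct computation gives
\[
Ext_k(T)(x_1,\dots,x_k'',\dots,x_n)=\int_{K}\Big(\prod_{i\neq k}x_i'(x_i)\Big)\,x_k''(x_k')\,d\mu(x_1',\dots,x_n').
\]
Because $(x_1',\dots,x_n')\mapsto\big(\prod_{i\ne k}x_i'(x_i)\big)x_k''(x_k')$ is a bounded scalar function and $\mu$ is \emph{$F$-valued}, this integral lies in $F$; hence $Ext_k$ is well defined into $F$ rather than merely into $F''$. The very same measure $\mu$, read through the isometry $E_k'\hookrightarrow E_k'''$, represents $Ext_k(T)$, so $\|Ext_k(T)\|_{P\I}\le\|T\|_{P\I}$, and equality holds since restricting the $k$-th variable back to $E_k$ recovers $T$. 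Thus $P\I$ is $F$-Arens stable for every $F$, i.e. Arens stable.

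For the \emph{vector-RNp} I would argue directly, for arbitrary index sets $J_1,\dots,J_n$. Let $T\in P\I(c_0(J_1),\dots,c_0(J_n);F)$ with representing measure $\mu$ on $K=\prod_i(B_{\ell_1(J_i)},w^*)$, so that $x_i'(e_j)$ is the $j$-th coordinate of $x_i'\in B_{\ell_1(J_i)}$. Then, using $\big\|\int g\,d\mu\big\|\le\int|g|\,d|\mu|$, Tonelli, and $\|x_i'\|_{\ell_1}\le1$,
\[
\sum_{j_1,\dots,j_n}\big\|T(e_{j_1},\dots,e_{j_n})\big\|_F\le\int_K\sum_{j_1,\dots,j_n}\prod_{i=1}^n\big|(x_i')_{j_i}\big|\,d|\mu|=\int_K\prod_{i=1}^n\|x_i'\|_{\ell_1}\,d|\mu|\le\|\mu\|.
\]
Taking the infimum over $\mu$ shows the coefficient family is absolutely summable with sum $\le\|T\|_{P\I}$ (in particular it is supported on a countable set). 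Since $T$ agrees with the nuclear operator carrying these coefficients on finitely supported vectors and both maps are norm continuous, $T$ is nuclear with $\|T\|_{\Nu}\le\|T\|_{P\I}$; the reverse inequality $\|\cdot\|_{P\I}\le\|\cdot\|_{\Nu}$ is general. Combined with the quotient $(\ell_1(J_1)\widetilde\otimes\dots\widetilde\otimes\ell_1(J_n)\widetilde\otimes F,\pi)\overset{1}{\twoheadrightarrow}\Nu(c_0(J_1),\dots,c_0(J_n);F)$ furnished by the representation theorem~\ref{representation theorem minimal} (as $(P\I)^{min}=\Nu\sim\pi$), this gives the isometric coincidence $P\I(c_0(J_1),\dots;F)\overset{1}{=}\Nu(c_0(J_1),\dots;F)$ for every $F$, which is exactly the vector-RNp of Definition~\ref{def RNp}. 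Equivalently, one checks $P\I\subseteq\lg_{wsc}$ (the estimate $\|T(\dots,z^j,\dots)\|_F\le C\int_K|x_k'(z^j)|\,d|\mu|\to0$ by dominated convergence, then telescoping) and invokes Proposition~\ref{teo cociente}.

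The step I expect to be the real crux is the Arens-stability computation: one must verify that the Arens extension of a Pietsch-integral operator does not leak from $F$ into $F''$. This is precisely the feature separating $P\I$ from $G\I$ (whose measures are $F''$-valued, giving only $F''$-Arens stability), and it is what allows Theorem~\ref{Lewis theorem} to be applied with an \emph{arbitrary} target space $F$ rather than only a dual one.
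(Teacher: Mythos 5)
Your treatment of the vector-RNp is correct, and its core is the same estimate as the paper's: the coefficient bound $\sum_{j_1,\dots,j_n}\|T(e_{j_1},\dots,e_{j_n})\|\le\int_K\prod_i\|x_i'\|\,d|\mu|\le\|\mu\|$ is exactly the paper's bound on the $A_{j_1,\dots,j_n}$, and your closing alternative ($P\I\subseteq\lg_{wsc}$ plus Proposition~\ref{teo cociente}) is precisely the paper's route; your Tonelli version over arbitrary index sets is in fact slightly more self-contained, since it avoids the reduction to countable $J$ altogether. For extendibility you hedge by citing the known result, which is what the paper does (it invokes \cite{CarLa04,CarLa05} for both extendibility and Arens stability); note, though, that your measure-lifting sketch is itself delicate: there is no Riesz representation for $F$-valued measures, and without the RNP the measure $\mu$ has no Bochner density with respect to $|\mu|$ to transport, so lifting $\mu$ along $\rho$ needs a Borel selection (available in the metrizable case) or an operator-theoretic argument.

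The genuine gap is in the Arens-stability computation, which you yourself identify as the crux. The displayed formula
\[
Ext_k(T)(x_1,\dots,x_k'',\dots,x_n)=\int_{K}\Big(\prod_{i\neq k}x_i'(x_i)\Big)\,x_k''(x_k')\,d\mu(x_1',\dots,x_n')
\]
presupposes that the function $x_k'\mapsto x_k''(x_k')$ is $|\mu|$-measurable on $(B_{E_k'},w^*)$. But this function is $w^*$-continuous only when $x_k''\in E_k$, and in general it is not even Borel: take $E_k=\ell_1$ and $x''\in\ell_\infty'$ an ultrafilter-limit functional; its restriction to the $w^*$-copy of $\{0,1\}^{\N}$ inside $B_{\ell_\infty}$ is the indicator of a nonprincipal ultrafilter, which is non-measurable for the coin-tossing measure, itself a regular Borel measure on $(B_{\ell_\infty},w^*)$. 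Nor can you derive the formula from the definition of $Ext_k$ by taking a Goldstine net $x_\alpha\overset{w^*}{\to}x_k''$ and passing the limit through the integral: dominated convergence is a sequential tool and fails for nets. The subsequent claim that ``the very same measure $\mu$, read through $E_k'\hookrightarrow E_k'''$, represents $Ext_k(T)$'' has the same defect, since the embedding $(B_{E_k'},w^*)\to(B_{E_k'''},w^*)$ is continuous for $\sigma(E_k',E_k'')$ but not for $\sigma(E_k',E_k)$, so the pushforward of $\mu$ need not be a regular Borel measure for the relevant topology. A correct structural argument factors $T=u\circ\phi$, where $\phi:E_1\times\dots\times E_n\to C(K)$ is the canonical multilinear map and $u:C(K)\to F$ is the integration operator; $u$ is absolutely summing, hence weakly compact (it factors through the weakly compact inclusion $C(K)\to L_1(|\mu|)$, by uniform integrability of the $L_\infty$-ball), so $Ext_k(T)=u''\circ Ext_k(\phi)$ takes values in $u''(C(K)'')\subseteq F$. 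Even then, the isometric statement $\|Ext_k(T)\|_{P\I}=\|T\|_{P\I}$ requires additional work, and this is exactly what the results the paper cites, \cite[Theorem 2.12]{CarLa04} and \cite[Theorem 5]{CarLa05}, supply.
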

\begin{proof}
  It is known that $P\I$ is an extendible Arens stable ideal (see for example \cite[Theorem 2.12]{CarLa04} and \cite[Theorem 5]{CarLa05} for an analogous result on the polynomial setting). Let us see that it has the vector-RNp. Using the representation theorem for minimal ideals \ref{representation theorem minimal} and Proposition~\ref{teo cociente} (Pietsch-integral operators are weakly sequentially continuous) it remains to see that $P\I(^nc_0;F)\overset 1 =\Nu(^nc_0;F)$. Indeed, if $T\in P\I(^nc_0;F)$, then $T$ can be written as

\[T(x_1,\dots,x_n)=\int_{B_{\ell_1}\times\cdots\times B_{\ell_1}} x'_1(x_1)\cdots x'_n(x_n) \ d\Gamma(x_1',\dots,x_n'),\]
  where $\Gamma$ is a regular $F$-valued measure of bounded variation on $(B_{\ell_1}\times\cdots\times B_{\ell_1}, w^*)$.

Now, $x_k'(\cdot)=\sum_{j_k=1}^\infty x_k'(e_{j_k}) e_{j_k}'(\cdot)$, then using the Dominated Convergence Theorem applied to the scalar measure $|\Gamma|$, we have

\begin{align*}
T(x_1,\dots,x_n)  = & \int_{B_{\ell_1}\times\cdots\times B_{\ell_1}} \left(\sum_{j_1=1}^\infty x_1'(e_{j_1})\cdot e_{j_1}'(x_1)\right)\cdots\left(\sum_{j_n=1}^\infty x_1'(e_{j_n})\cdot e_{j_n}'(x_n)\right) \ d\Gamma(x_1',\dots,x_n')\\
    = & \int_{B_{\ell_1}\times\cdots\times B_{\ell_1}}\sum_{j_1\dots,j_n} x_1'(e_{j_1})\cdots x_1'(e_{j_n})\cdot e_{j_1}'(x_1) \cdots e_{j_n}'(x_n) \ d\Gamma(x_1',\dots,x_n')\\
= & \sum_{j_1,\dots,j_n} \underbrace{\left(\int_{B_{\ell_1}\times\cdots\times B_{\ell_1}} x_1'(e_{j_1})\cdots x_1'(e_{j_n})\ d\Gamma(x_1',\dots,x_n')\right)}_{:=A_{j_1,\dots,j_n}}\cdot e_{j_1}'(x_1) \cdots e_{j_n}'(x_n)
\end{align*}
Since,
\begin{align*}
\sum_{j_1,\dots,j_n} \|A_{j_1,\dots ,j_n}\|= & \sum_{j_1,\dots,j_n}\left\|\int_{B_{\ell_1}\times\cdots\times B_{\ell_1}} x_1'(e_{j_1})\cdots x_1'(e_{j_n})\ d\Gamma(x_1',\dots,x_n')\right\|\\
\leq & \sum_{j_1,\dots,j_n}\int_{B_{\ell_1}\times\cdots\times B_{\ell_1}} |x_1'(e_{j_1})|\cdots |x_1'(e_{j_n})|\ d|\Gamma|(x_1',\dots,x_n')\\
= & \int_{B_{\ell_1}\times\cdots\times B_{\ell_1}} \|x_1'\|\cdots \|x_n'\|\ d|\Gamma|(x_1',\dots,x_n')\leq \|T\|_{P\I},
\end{align*}
$T$ belongs to $\Nu(^nc_0;F)$ and $\|T\|_{\Nu}\leq\|T\|_{P\I}$, thus $\Nu(^nc_0,F)\overset 1 = P\I(^nc_0;F)$.
\end{proof}

Recall that $\pi$ is the tensor norm associated to $P\I$. We now recover the main result of \cite{Al85a} and other consequences.

\begin{corollary}\label{bases integrales}
The following holds:
\begin{itemize}
\item[(1)] If $E_1, \dots, E_n$ are Asplund spaces, then
$$ (E_1' \widetilde\otimes \dots \widetilde\otimes E_n' \widetilde\otimes F, \pi) \overset 1 \twoheadrightarrow P\I(E_1,\dots,E_n;F).$$
In particular, $\Nu(E_1,\dots,E_n;F)\overset 1 = (P\I)^{min}(E_1,\dots,E_n;F)\overset 1 = P\I(E_1,\dots,E_n;F).$
\item[(2)] If $E_1',\dots,E_n', F$ have  bases $(e_{j_1}')_{j_1},\dots, (e_{j_n}')_{j_n}, (f_l)_l$ respectively, then the monomials
        \[\big(\ e_{j_1}'(\ \cdot \ )\cdots e_{j_n}'(\ \cdot \ )\cdot f_l\ \big)_{j_1,\dots,j_n,l}\]
         with the square ordering form a Schauder basis of $P\I(E_1,\dots,E_n;F)$.
\item[(3)] If $E_1,\dots,E_n, F$ have shrinking Schauder bases $(e_{j_1})_{j_1},\dots, (e_{j_n})_{j_n}, (f_l)_l$ respectively, then the monomials (associated to the coordinate functionals)
         \[\big(\ e_{j_1}'(\ \cdot \ )\cdots e_{j_n}'(\ \cdot \ )\cdot f_l'\ \big)_{j_1,\dots,j_n,l}\]
         with the square ordering form a boundedly complete Schauder basis of the space $P\I(E_1,\dots,E_n;F')$.
\end{itemize}
\end{corollary}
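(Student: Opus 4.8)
The plan is to derive Corollary \ref{bases integrales} as a direct specialization of the general machinery already established, with Proposition \ref{lemma integral} serving as the bridge that places $P\I$ inside the hypotheses of the main theorems. The key observation is that $\pi$ is the tensor norm associated to $P\I$, and by Proposition \ref{lemma integral} the ideal $P\I$ is Arens stable, extendible and enjoys the vector-RNp (in particular the $F$-RNp for every Banach space $F$). Thus all three items reduce to invoking the appropriate result from Section \ref{main results}.

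First I would prove item $(1)$. Since $E_1,\dots,E_n$ are Asplund and $P\I$ is an $F$-Arens stable extendible ideal with the $F$-RNp, Theorem \ref{Lewis theorem} applies verbatim and yields the quotient mapping
\[(E_1' \widetilde\otimes \dots \widetilde\otimes E_n' \widetilde\otimes F, \pi) \overset 1 \twoheadrightarrow P\I(E_1,\dots,E_n;F),\]
together with the isometric coincidence $(P\I)^{min}(E_1,\dots,E_n;F)\overset 1 = P\I(E_1,\dots,E_n;F)$. The identification of the minimal kernel with the nuclear operators, $(P\I)^{min}\overset 1 = \Nu$, was already recorded in the preliminaries (it is the statement $(P\I)^{min}=\Nu$), so the chain of isometries in the ``In particular'' clause follows immediately. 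This recovers Alencar's theorem as announced.

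Next, items $(2)$ and $(3)$ are read off from Theorem \ref{bases ideales}. For item $(2)$ I would simply apply part $(1)$ of that theorem with $\U = P\I$ and $\alpha = \pi$: the hypotheses there ask precisely that $\U$ be extendible, $F$-Arens stable, have the $F$-RNp, and that $E_1',\dots,E_n',F$ possess Schauder bases — all guaranteed here — so the monomials with the square ordering form a Schauder basis of $P\I(E_1,\dots,E_n;F)$. For item $(3)$, since the range is the dual space $F'$ and $E_1,\dots,E_n,F$ have shrinking bases, one mimics the argument of Corollary \ref{bases en extendibles}$(3)$: the representation theorem for maximal ideals identifies $P\I(E_1,\dots,E_n;F')$ with $(E_1\tilde\otimes\dots\tilde\otimes E_n\tilde\otimes F,\pi')'$, while part $(1)$ of the present corollary together with the representation theorem for minimal ideals \ref{representation theorem minimal} identifies it with $(E_1'\tilde\otimes\dots\tilde\otimes E_n'\tilde\otimes F',\pi)$; matching the monomials against their coordinate functionals shows the basis is boundedly complete.

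The only point demanding care — and the main obstacle — is the bookkeeping in item $(3)$ regarding which basis lives in which space and why the resulting monomial basis is \emph{boundedly complete} rather than merely a Schauder basis. One must verify that $P\I$ is $F'$-Arens stable for the dual range (immediate, since maximal ideals are $F'$-Arens stable, and $P\I$ is in fact fully Arens stable by Proposition \ref{lemma integral}), that shrinking bases of $E_1,\dots,E_n,F$ force the bounded approximation property of $E_1',\dots,E_n',F'$ so that both representation theorems give isometric identifications, and finally that the dual-space structure makes the coordinate functionals of a shrinking basis into a boundedly complete basis of the dual. All of this parallels Corollary \ref{bases en extendibles}$(3)$ line by line, so the proof is essentially a transcription with $\alpha_{ext}$ replaced by $\pi$ and the ``no copy of $c_0$'' hypothesis dropped, since $P\I$ has the vector-RNp unconditionally.
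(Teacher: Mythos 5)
Your proposal is correct and follows the paper's proof essentially verbatim: the paper also derives item $(1)$ directly from Proposition \ref{lemma integral} together with Theorem \ref{Lewis theorem}, and settles items $(2)$ and $(3)$ by repeating the argument of Corollary \ref{bases en extendibles} (item $(2)$ via Theorem \ref{bases ideales}$(1)$, item $(3)$ via the maximal/minimal representation theorems with $\pi$ in place of $\alpha_{ext}$ and the $c_0$ hypothesis dropped thanks to the vector-RNp). Your added care in item $(3)$ — noting that the dual range makes $P\I$ agree with its maximal hull so that the representation theorem for maximal ideals applies — is exactly the implicit step the paper leaves to the reader.
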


\begin{proof}
 Item $(1)$ follows from Proposition \ref{lemma integral} and Theorem \ref{Lewis theorem}. Items $(2)$ and $(3)$ are obtained similarly to what was done in Corollary \ref{bases en extendibles}.
\end{proof}

Looking carefully the proof of Theorem \ref{teo achicante} and using the fact that $G\I(E_1,\dots,E_n;F'')\overset 1 =P\I(E_1,\dots,E_n;F'')$ since the range is a dual space, similar manipulations show the following corollary.

\begin{corollary} \label{coroparagrot}
 If $E_1,\dots,E_n$ have shrinking bases and $F''$ has the bounded approximation property, then
  \[(E_1'\tilde\otimes\dots\tilde\otimes E_n'\tilde\otimes F,\pi)\overset 1 =\Nu(E_1,\dots,E_n;F)\overset 1 = P\I(E_1,\dots,E_n;F) \overset 1 = G\I(E_1,\dots,E_n;F).\]
  In particular, if $F$ has also a basis then the monomials form a basis of $G\I(E_1,\dots,E_n;F)$.
\end{corollary}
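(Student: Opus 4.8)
The plan is to run the argument of Theorem~\ref{teo achicante} with $\U=P\I$ and then treat the extra identification with $G\I$ by passing through the bidual. First I would invoke Proposition~\ref{lemma integral}, which records that $P\I$ is Arens stable (so in particular $F''$-Arens stable), extendible, and has the vector-RNp (so the $F''$-RNp). Since $E_1,\dots,E_n$ have shrinking bases they are Asplund, and $F''$ has the bounded approximation property, so the hypotheses of Theorem~\ref{teo achicante} hold for $\U=P\I$. This gives at once
\[(E_1'\tilde\otimes\dots\tilde\otimes E_n'\tilde\otimes F,\pi)\overset 1 = (P\I)^{min}(E_1,\dots,E_n;F)\overset 1 = P\I(E_1,\dots,E_n;F),\]
and recalling $(P\I)^{min}\overset 1 =\Nu$ settles the first three isometric identities.

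For the coincidence with $G\I$, the contractive inclusion $P\I\hookrightarrow G\I$ is automatic (an $F$-valued representing measure is in particular $F''$-valued, so $\|T\|_{G\I}\le\|T\|_{P\I}$), and only the reverse needs proof: every $T\in G\I(E_1,\dots,E_n;F)$ is Pietsch-integral with $\|T\|_{P\I}\le\|T\|_{G\I}$. By definition of the integral norms, $T$ is Grothendieck-integral into $F$ precisely when its $F''$-valued representing measure exhibits $J_F\circ T$ as a Pietsch-integral operator into the dual space $F''$; equivalently $J_F\circ T\in G\I(E_1,\dots,E_n;F'')\overset 1 = P\I(E_1,\dots,E_n;F'')$, with $\|J_F\circ T\|_{P\I}=\|T\|_{G\I}$. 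Applying Theorem~\ref{Lewis theorem} to $\U=P\I$ with target $F''$ — legitimate since the $E_i$ are Asplund and $P\I$ is $F''$-Arens stable, extendible, with the $F''$-RNp — yields $\Nu(E_1,\dots,E_n;F'')\overset 1 = P\I(E_1,\dots,E_n;F'')$, so that $J_F\circ T\in\Nu(E_1,\dots,E_n;F'')$ with $\|J_F\circ T\|_{\Nu}=\|T\|_{G\I}$.

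It remains to push this nuclearity back down from $F''$ to $F$, and here I would reproduce verbatim the descent in the proof of Theorem~\ref{teo achicante}. Writing $\overline{P_k}=(P_k^1,\dots,P_k^n)$ for the truncation onto the first $k$ basis vectors of each $E_j$, Lemma~\ref{lemaproyeccion} gives $J_F\circ T\circ\overline{P_k}\to J_F\circ T$ in $\Nu$-norm, so $(J_F\circ T\circ\overline{P_k})_k$ is Cauchy in $\Nu(E_1,\dots,E_n;F'')$. Each $T\circ\overline{P_k}$ is finite type with values in $F$, so Lemma~\ref{lemma1} yields $\|T\circ\overline{P_k}\|_{\Nu(\dots;F)}=\|J_F\circ T\circ\overline{P_k}\|_{\Nu(\dots;F'')}$; hence $(T\circ\overline{P_k})_k$ is Cauchy in $\Nu(E_1,\dots,E_n;F)$ and, converging pointwise to $T$, forces $T\in\Nu(E_1,\dots,E_n;F)=P\I(E_1,\dots,E_n;F)$ with $\|T\|_{P\I}\le\|T\|_{\Nu}=\|J_F\circ T\|_{\Nu}=\|T\|_{G\I}$. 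Together with $\|T\|_{G\I}\le\|T\|_{P\I}$ this gives the isometry $P\I\overset 1 = G\I$.

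The main obstacle is exactly this bidual descent for $G\I$: unlike $P\I$, the ideal $G\I$ is not Arens stable, so Theorem~\ref{teo achicante} cannot be applied to it directly, and the crux is that the finite-rank truncations $T\circ\overline{P_k}$ are genuinely $F$-valued, which is what lets Lemma~\ref{lemma1} transport the nuclear Cauchy property from $F''$ back to $F$. Finally, the ``in particular'' clause is immediate from the first displayed identity: since $E_1,\dots,E_n$ have shrinking bases, the coordinate functionals are bases of $E_1',\dots,E_n'$, and with $F$ also having a basis the Gelbaum--Gil de Lamadrid construction recalled before Theorem~\ref{bases ideales} makes the monomials with the square ordering a Schauder basis of $(E_1'\tilde\otimes\dots\tilde\otimes E_n'\tilde\otimes F,\pi)\overset 1 = G\I(E_1,\dots,E_n;F)$.
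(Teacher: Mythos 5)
Your proposal is correct and is precisely the argument the paper intends: its one-line proof says to rerun the proof of Theorem~\ref{teo achicante} using $G\I(E_1,\dots,E_n;F'')\overset 1 = P\I(E_1,\dots,E_n;F'')$ for the dual range space, which is exactly your route — Theorem~\ref{teo achicante} (via Proposition~\ref{lemma integral}) for the $\Nu=P\I$ identities, then lifting $T\in G\I$ to $J_F\circ T\in P\I(\cdot;F'')\overset 1 =\Nu(\cdot;F'')$ by Theorem~\ref{Lewis theorem}, and descending to $F$ through the truncations $T\circ\overline{P_k}$ with Lemmas~\ref{lemaproyeccion} and~\ref{lemma1}. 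You have simply made explicit the ``similar manipulations'' the paper leaves to the reader, including the correct observation that the finite-type truncations being $F$-valued is what allows the norm transfer back from $F''$.
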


Note that here we have a coincidence result for $G\I$ (which is bigger than $P\I$). This result is somewhat stronger than those appearing in the literature for $P\I$.

Also as in Corollary \ref{coro separa} and \ref{coro asplund} we can also deduce the following well-known consequences (see \cite{RueSte82}).

\begin{corollary}
Let $E_1, \dots, E_n, F$ be Banach spaces such that $E_i'$ and $F$ are separable spaces, for all  $1 \leq i \leq n$. Then the space $P\I(E_1,\dots,E_n;F)$ is separable.
\end{corollary}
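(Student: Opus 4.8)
The plan is to obtain this as an immediate corollary of the general separability result, Proposition \ref{separabilidad}, specialized to the ideal $\U = P\I$. That proposition guarantees separability of $\U(E_1,\dots,E_n;F)$ whenever $\U$ is an $F$-Arens stable extendible ideal enjoying the $F$-RNp and the spaces $E_i'$ and $F$ are separable. Thus the entire task reduces to checking that $P\I$ meets the structural hypotheses required of the ideal; the separability hypotheses on the spaces are exactly those assumed in the statement.

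For this I would invoke Proposition \ref{lemma integral}, which already establishes that $P\I$ is Arens stable, extendible, and has the vector-RNp. Since Arens stability is precisely $F$-Arens stability for every target space $F$, and the vector-RNp is precisely the $F$-RNp for every $F$, the ideal $P\I$ is in particular $F$-Arens stable and has the $F$-RNp for the specific space $F$ appearing here. With these ingredients in hand, Proposition \ref{separabilidad} applies verbatim and yields that $P\I(E_1,\dots,E_n;F)$ is separable.

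The argument is genuinely routine once the machinery is assembled, and I do not expect any real obstacle at this stage: all the substantial content has been absorbed earlier. The nontrivial point is that $P\I$ possesses the vector-RNp, established in Proposition \ref{lemma integral} by reducing to the coincidence $\Nu(^nc_0;F) \overset 1 = P\I(^nc_0;F)$ via the integral representation and a Dominated Convergence computation; the mechanism turning Arens stability plus the RNp into separability is the content of Proposition \ref{separabilidad}. The present corollary is merely the collation of these facts for the concrete ideal $P\I$, exactly mirroring how Corollary \ref{coro separa} was deduced for the extendible ideal $\E$.
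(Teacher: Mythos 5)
Your proposal is correct and is essentially the paper's own argument: the paper deduces this corollary exactly ``as in Corollary \ref{coro separa}'', i.e.\ by feeding Proposition \ref{lemma integral} (Arens stability, extendibility, vector-RNp of $P\I$) into Proposition \ref{separabilidad}, noting as you do that separability of the $E_i'$ makes the $E_i$ Asplund so the machinery applies. Nothing is missing.
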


\begin{corollary}
The spaces $E_1, \dots, E_n, F$ are Asplund spaces if and only if the tensor product $(E_1\tilde\otimes \dots\tilde\otimes E_n\tilde\otimes F, \varepsilon)$ is Asplund.
\end{corollary}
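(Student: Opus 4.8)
The plan is to recognize this corollary as the instance of Theorem~\ref{RN para tensores} obtained by choosing $\U = G\I$, the ideal of Grothendieck-integral multilinear operators. Its associated tensor norm is $\pi$, so that $\alpha' = \pi' = \varepsilon$, and the equivalence between the Asplundness of $E_1,\dots,E_n,F$ and that of $(E_1\tilde\otimes\dots\tilde\otimes E_n\tilde\otimes F,\varepsilon)$ is exactly the equivalence $(1)\Leftrightarrow(2)$ of that theorem. Note that one cannot take $\U = P\I$ directly, since $P\I$ is not maximal and the proof of Theorem~\ref{RN para tensores} relies on the representation theorem for maximal ideals; passing to the maximal hull $G\I$ is what makes $\alpha' = \varepsilon$ available together with maximality.

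To invoke Theorem~\ref{RN para tensores} I first check its hypotheses for $\U = G\I$. The ideal $G\I$ is maximal, and it is extendible because its tensor norm $\pi$ is projective in each coordinate (this is the extendibility criterion recorded at the start of the proof of Theorem~\ref{Lewis theorem}, since $\pi$ respects quotients). It remains to verify that $G\I$ has the $F'$-RNp for every separable dual space $F'$. Here I would use that whenever the range is a dual space the Grothendieck- and Pietsch-integral ideals coincide isometrically, $G\I(\dots;F')\overset 1 = P\I(\dots;F')$, the same identity already invoked just before Corollary~\ref{coroparagrot}. In particular $G\I(c_0(J_1),\dots,c_0(J_n);F') \overset 1 = P\I(c_0(J_1),\dots,c_0(J_n);F')$, and since $P\I$ enjoys the vector-RNp by Proposition~\ref{lemma integral}, the quotient map $(\ell_1(J_1)\tilde\otimes\dots\tilde\otimes\ell_1(J_n)\tilde\otimes F',\pi)\overset 1 \twoheadrightarrow G\I(c_0(J_1),\dots,c_0(J_n);F')$ holds for all index sets, which is precisely the $F'$-RNp.

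With the hypotheses in force, Theorem~\ref{RN para tensores} applied to $G\I\sim\pi$ yields the equivalence of its three conditions; the equivalence $(1)\Leftrightarrow(2)$, recalling $\pi'=\varepsilon$, is exactly the assertion, and it recovers the known transference result of \cite{RueSte82}. The only point requiring genuine care, rather than bookkeeping, is the verification of the $F'$-RNp for $G\I$: everything hinges on reducing $G\I$ to $P\I$ on dual ranges, and once that reduction is in place the statement follows from Proposition~\ref{lemma integral} and the already-established Theorem~\ref{RN para tensores}.
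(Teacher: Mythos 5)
Your proposal is correct and follows essentially the route the paper intends: the paper deduces this corollary, just as it did Corollary~\ref{coro asplund}, by applying Theorem~\ref{RN para tensores} to the maximal ideal associated with $\pi$ (so that $\alpha'=\pi'=\varepsilon$), with the hypotheses supplied by Proposition~\ref{lemma integral} together with the coincidence $G\I(\dots;F')\overset 1 = P\I(\dots;F')$ on dual-valued ranges. Your explicit passage to $G\I$ for maximality and the transfer of the $F'$-RNp from $P\I$ via that coincidence is exactly the bookkeeping the paper leaves implicit.
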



\section{Coincidence on ideals of polynomials}\label{seccion polinomial}

In this section we provide coincidence results in the context of vector-valued ideals of homogeneous polynomials. Most of the notions and definition in this context are quite similar as those given for multilinear ideals in Section \ref{preliminares}. We omit some of the necessary background and refer the reader to \cite[Section 7]{Flo01} (and also for the theory of their associated tensor norms). For a complementary reading we refer to \cite{Flo97, FloHun02}.

We need a couple of definitions in order to state the analogous results in the context of vector-valued ideals of homogeneous polynomials.

Let $\Q$ be an ideal of vector-valued $n$-homogeneous polynomials (see definition in \cite[Section 7.2]{Flo01} ) and $\gamma$ be a mixed tensor norm (i.e., it assigns to each pair $(E,F)$ a norm on $(\otimes^{n,s}E) \otimes F$ satisfying the properties given in \cite[Section 7.6]{Flo01}, where $\otimes^{n,s} E$ stands for the symmetric tensor product of $E$). We say that $\Q$ and $\gamma$ are associated (and we write this as $\Q \sim \gamma$) if
$$\Q(^nM;N) \overset{1}{=} \left( (\otimes^{n,s}M') \otimes N , \gamma \right),$$
for every finite dimensional spaces $M$ and $N$.

Let $\Q$ be an ideal of $n$-homogeneous polynomials, recall that the Aron Berner extension $AB: \p(^nE;F)\to\p(^nE'';F'')$ is defined by
 \[AB(p)(x):=EXT(\check p)(x,\dots,x),\]
 where $\check p$ is the symmetric multilinear operator associated to $p$ and $EXT$ stands for the iterated extension to the bidual given by $(Ext_n) \circ \dots \circ (Ext_1)$.

 We say that an homogeneous polynomial ideal $\Q$ is \textit{$F$-Aron Berner stable} if the mapping $AB:\Q(^nE;F)\to \Q(^nE'';F)$
 is well defined and results an isometry. Note that the condition above says that the range of the Aron Berner extension remains in $F$.
If $\Q$ is $F$-Aron Berner stable for every $F$, we just say that $\Q$ is an \emph{Aron Berner stable ideal}. We stress that the same terminology was used in \cite{BoGaPe10}, with a different meaning there.  As in the multilinear case, every  maximal ideal of $n$-homogeneous polynomials is $F'$-Aron Berner stable for every dual space (it can be obtained adapting the proof of \cite[Lemma 2.2]{CarGa11b}), although we will not use this fact.

Analogous to the multilinear definition, an ideal $\Q$ is \textit{extendible} if
for every Banach spaces $E,F$, every superspace $G \supset E$ and every $p \in \Q(^nE;F)$ there exists an extension $\widetilde{p} \in \Q(^nG;F)$ of $p$ with the same $\Q$-norm.
Some examples of extendible ideals are $\p_{P\I}$ and $\p_e$ (the definitions of these ideals are completely similar in this context, see for example \cite[Examples 1.11, 1.12]{CarDiMu09}).

We are now ready to prove the polynomial version of Theorem~\ref{Lewis theorem}. Obviously we state first the polynomial version of the Radon-Nikodým property.
\begin{definition}\label{def RNp polinomial}
Let $\Q \sim \gamma$ be an ideal of $n$-homogeneous polynomials and $F$ be a Banach space. We say that $\Q$ has the \textit{$F$-Radon-Nikodým property} ($F$-RNp) if $$(\widetilde\otimes^{n,s}\ell_1(J)  \widetilde\otimes F, \gamma) \overset 1 \twoheadrightarrow  \Q(^nc_0(J);F),$$
for every  index set $J$.

If $\Q$ has the $F$-RNp for all $F$, we say that $\Q$ has the vector-RNp.
\end{definition}

It should be mentioned that, as in the multilinear case, we have the analogous result of Proposition \ref{teo cociente}.

To translate what we know about multilinear operators to the polynomial context we need to make some observations first.  Let $\U$ be an ideal of multilinear operators which is $F$-Arens stable and let us call $\Psi : \U(c_0(B_{E_1'}),\dots, c_0(B_{E_n'});F)\to\U(E_1,\dots, E_n;F)$ the composition of the downward mappings in the right side of the Figure~\ref{diagrama}. The following proposition describes the mapping $\Psi$ more easily (this will be useful to prove the polynomial version of Theorem \ref{Lewis theorem}).

\begin{proposition}
Let $\U$ be an $F$-Arens stable ideal, then the mapping
$$\Psi :  \U(c_0(B_{E_1'}),\dots, c_0(B_{E_n'});F)\to\U(E_1,\dots, E_n;F)$$ is given by $$\Psi(T)(x_1,\dots,x_n)=EXT(T)(I_{E_1}(x_1),\dots,I_{E_n}(x_n)).$$
\end{proposition}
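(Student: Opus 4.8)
The plan is to compute the composite $\Psi = \Psi_1 \circ \Psi_2 \circ \dots \circ \Psi_n$ directly from the definition of each $\Psi_k$, peeling off one factor at a time and pushing the Arens extensions to the outside. By definition
\[
\Psi_k(T) = Ext_k(T) \circ (Id_{E_1},\dots,Id_{E_{k-1}}, I_{E_k}, Id_{E_{k+1}},\dots,Id_{E_n}),
\]
and since $c_0(B_{E_k'})'' = \ell_\infty(B_{E_k'})$ while $I_{E_k}: E_k \to \ell_\infty(B_{E_k'})$, the single-step formula reads $\Psi_k(T)(\dots, x_k, \dots) = Ext_k(T)(\dots, I_{E_k}(x_k), \dots)$. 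The $F$-Arens stability hypothesis guarantees that at each stage the operator obtained stays in $\U$ with range in $F$, so every composite below is legitimate.

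The key technical ingredient is a commutation lemma: for $j \neq k$ and any bounded $R_j$ acting on the $j$-th slot,
\[
Ext_k\big(S \circ (Id,\dots,R_j,\dots,Id)\big) = Ext_k(S) \circ (Id,\dots,R_j,\dots,Id).
\]
I would prove this by pairing both sides with an arbitrary $y'\in F'$ and unwinding the definition of $Ext_k$: both sides reduce to $x_k''\big(z \mapsto \overleftarrow{(J_F \circ S)}(\dots, R_j(x_j), \dots, z, \dots, y')\big)$, because $R_j$ touches neither the $k$-th variable nor the functional argument. In words, the Arens extension in the $k$-th variable is natural with respect to precomposition in the remaining variables.

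With this in hand I would argue by downward induction on $m$ that
\[
(\Psi_m \circ \dots \circ \Psi_n)(T) = (Ext_m \circ \dots \circ Ext_n)(T) \circ (Id,\dots,Id, I_{E_m}, \dots, I_{E_n}),
\]
the precompositions occupying slots $m,\dots,n$. The base case $m=n$ is the single-step formula for $\Psi_n$. For the inductive step one applies $\Psi_m$ to the level-$(m+1)$ expression; since $Ext_m$ extends a slot disjoint from $\{m+1,\dots,n\}$, the commutation lemma lets it pass the precompositions $I_{E_{m+1}},\dots,I_{E_n}$, and precomposing with $I_{E_m}$ in slot $m$ then yields the level-$m$ formula. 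Taking $m=1$ and evaluating gives
\[
\Psi(T)(x_1,\dots,x_n) = (Ext_1 \circ \dots \circ Ext_n)(T)\big(I_{E_1}(x_1),\dots,I_{E_n}(x_n)\big),
\]
i.e. the iterated Arens extension evaluated at the points $I_{E_k}(x_k)$.

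The step requiring the most care is the bookkeeping of which variable each extension acts on, together with matching the order of the iterated extension to the convention $EXT = Ext_n \circ \dots \circ Ext_1$: the induction stacks the extensions from the $n$-th variable inward, so one must be explicit about how this composite is identified with $EXT$. For the symmetric operators $\check p$ to which the proposition is ultimately applied the order of the distinct-variable Arens extensions is immaterial, so the identification with $EXT$ is unambiguous; this, rather than the essentially routine commutation lemma, is where I expect the only genuine subtlety to lie.
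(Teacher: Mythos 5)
Your commutation lemma and the induction built on it are sound as far as they go, and they amount to a systematic, all-$n$ version of what the paper does: the paper proves only the case $n=2$, by exactly the pairing-with-$y'\in F'$ unwinding of the definition of $Ext_k$ that your lemma encapsulates, and declares the general case analogous. The genuine gap is the step you yourself flagged and then dismissed. Peeling from the $n$-th slot inward, your induction yields $\Psi(T)=(Ext_1\circ\cdots\circ Ext_n)(T)\circ(I_{E_1},\dots,I_{E_n})$, whereas the statement asserts the composite $EXT=Ext_n\circ\cdots\circ Ext_1$. For a general $T$ these are the two \emph{distinct} iterated Arens extensions --- this is precisely the Arens (ir)regularity phenomenon --- and they are being evaluated at the points $I_{E_k}(x_k)$, which are genuine elements of $\ell_\infty(B_{E_k'})=c_0(B_{E_k'})''$ lying outside $c_0(B_{E_k'})$, so there is no generic reason for the two values to agree. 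Your proposed resolution via the symmetry of $\check p$ fails twice over: the proposition is stated, and proved in the paper, for arbitrary $T\in\U(c_0(B_{E_1'}),\dots,c_0(B_{E_n'});F)$ on distinct spaces $E_1,\dots,E_n$, where no symmetry is available; and even for a symmetric multilinear operator the differently-ordered iterated extensions are known to coincide only on the \emph{diagonal} of the bidual (that is what makes the Aron--Berner extension unambiguous), not as multilinear maps, which is weaker than the identity your argument needs.

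The repair is cheap, and it is what the paper's own computation implicitly does: peel from the \emph{first} slot, i.e. compute $\Psi=\Psi_n\circ\cdots\circ\Psi_1$ (for $n=2$ the paper literally computes $\Psi(T)=\Psi_2(\Psi_1(T))$, as its first line $y'(\Psi(T)(x_1,x_2))=y'\big(Ext_2(\Psi_1(T))(x_1,I_{E_2}(x_2))\big)$ shows). Your commutation lemma, unchanged, then gives $(\Psi_m\circ\cdots\circ\Psi_1)(T)=(Ext_m\circ\cdots\circ Ext_1)(T)\circ(I_{E_1},\dots,I_{E_m},Id,\dots,Id)$, and $m=n$ is exactly the asserted formula with $EXT$. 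To be fair, Figure~\ref{diagrama} stacks the downward arrows as $\Psi_n,\dots,\Psi_1$ from top to bottom, so your reading of ``the composition of the downward mappings'' is the literal one; but if you keep that order you must additionally prove that the two composites coincide in this setting. They do, yet only because every scalar $n$-linear form on $c_0(J_1)\times\cdots\times c_0(J_n)$ is approximable (Proposition~\ref{bogda} in the scalar case, combined with a countable-support reduction as in the proof of Proposition~\ref{teo cociente}), hence Arens regular, so all iterated extensions of $y'\circ T$ agree on the biduals --- a substantive extra argument that your write-up neither states nor supplies.
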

\begin{proof}
For convenience we prove the result for $n=2$. Let $y'\in F'$ and $x_i\in E_i$ ($i=1,2$). Then,
\begin{align*}
y'(\Psi(T)(x_1, x_2)) & = y'[(Ext_2(\Psi_1(T))(x_1,I_{E_2}(x_2))]\\
&= I_{E_2}(x_2)\big(z_2\mapsto (\overleftarrow{J_F\circ \Psi_1(T)})(x_1, z_2, y')\big)\\
&= I_{E_2}(x_2)\big(z_2\mapsto  y'(\Psi_1(T)(x_1, z_2))\big)\\
&= I_{E_2}(x_2)\big(z_2\mapsto  [I_{E_1}(x_1) (z_1\mapsto (\overleftarrow{J_F\circ T})(z_1, z_2,y'))]\big)\\
&= I_{E_2}(x_2)(z_2\mapsto Ext_1(T)(I_{E_1}(x_1),z_2)(y'))\\
&= I_{E_2}(x_2)(z_2\mapsto (\overleftarrow{J_F\circ Ext_1(T)})(I_{E_1}(x_1),z_2,y')\\
&= y'\big((Ext_2\circ Ext_1(T))(I_{E_1}(x_1),I_{E_2}(x_2))\big),
\end{align*}
which concludes the proof.
\end{proof}

Now, this proposition shows that the diagram
 \begin{equation}\label{diagrama para polinomios}
\xymatrix{ \big( \widetilde{\otimes}_{i=1}^n \ell_1(B_{E_i'})\widetilde{\otimes} F, \alpha \big) \ar[rr]^{\varrho_0} \ar@{->>}[d]^{{\otimes}_{i=1}^{n} Q_{E_i'}{\otimes}Id_{F} }
&  & \U(c_0(B_{E_1'}),\dots,c_0(B_{E_n'});F) \ar@{->>}[d]_{\Psi} \\
(\widetilde{\otimes}_{i=1}^{n} E_i'\widetilde{\otimes} F, \alpha) \ar[rr]^{\varrho_n}
& & \U(E_1,\dots,E_n;F)  \\
}
\end{equation}
conmutes and, by the proof of Proposition~\ref{cociente}, we get that, for $E_1, \dots, E_n$ Asplund spaces, the mapping $\Psi$ is a metric surjection if $\U$ is an $F$-Arens stable extendible ideal.

Now we are ready to give a coincidence result in this frame.

\begin{theorem}
Let $E$ be an Asplund space. If $\Q\sim\gamma$ is an $F$-Aron Berner stable extendible ideal  with the $F$-RNp  then,
\begin{equation*}
(\widetilde\otimes^{n,s}E\widetilde\otimes F, \gamma) \overset 1 \twoheadrightarrow \Q(^nE;F).
\end{equation*}
In particular, $ \Q^{min}(^nE;F)\overset 1  = \Q(^nE;F)$.
\end{theorem}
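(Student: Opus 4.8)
The plan is to transfer the proof of Theorem~\ref{Lewis theorem} to the symmetric setting, replacing the coordinate-by-coordinate ladder of Figure~\ref{diagrama} by the single commutative square~\eqref{diagrama para polinomios}, now written in its polynomial form. Taking the index set $J=B_{E'}$ in Definition~\ref{def RNp polinomial}, I would work with
\[
\xymatrix{
(\widetilde\otimes^{n,s}\ell_1(B_{E'})\,\widetilde\otimes\, F, \gamma) \ar[rr]^{\varrho_0} \ar[d]_{L} & & \Q(^nc_0(B_{E'});F) \ar[d]^{\Psi} \\
(\widetilde\otimes^{n,s}E'\,\widetilde\otimes\, F, \gamma) \ar[rr]^{\varrho_n} & & \Q(^nE;F),
}
\]
where $L:=(\widetilde\otimes^{n,s}Q_{E'})\,\widetilde\otimes\,\id_F$ is induced by the canonical metric surjection $Q_{E'}\colon\ell_1(B_{E'})\overset1\twoheadrightarrow E'$, the map $\Psi$ sends $P$ to $x\mapsto AB(P)(I_E x)=EXT(\check P)(I_E x,\dots,I_E x)$ (the preceding proposition, read for the symmetric multilinear operator $\check P$), and $\varrho_0,\varrho_n$ are the representation maps. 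Commutativity of this square is exactly the polynomial reformulation of the preceding proposition; it is the reason one can collapse the ladder into a single step.

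I would then verify that every arrow except $\varrho_n$ has the expected metric behaviour. First, $\varrho_0$ is a metric surjection: this is precisely the $F$-RNp of $\Q$ for the index set $B_{E'}$. Second, $\varrho_n$ is a norm-one map onto $\Q^{min}(^nE;F)$ by the representation theorem for minimal ideals in its polynomial version \cite{Flo01}, while $L$ is a contraction by the metric mapping property (indeed a metric surjection, since the extendibility of $\Q$ forces $\gamma$ to be projective). Third, and this is the delicate point, $\Psi$ is a metric surjection whenever $E$ is Asplund: this is the polynomial counterpart of the collapsed-diagram statement established for $\U$ just before the theorem, proved by re-running the argument of Proposition~\ref{cociente}. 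Concretely, one extends $P$ to a polynomial on the superspace $\ell_\infty(B_{E'})$ using extendibility, factors the adjoint $I_E'$ through $\ell_1(B_{E'})$ by Lewis--Stegall (available because $E'$ has the Radon--Nikod\'ym property, $E$ being Asplund), and uses that $AB=EXT=Ext_n\circ\dots\circ Ext_1$ so that the $F$-Aron Berner stability of $\Q$ supplies exactly the $F$-Arens stability hypothesis needed to keep the range inside $F$ with the right norm. Checking that $F$-Aron Berner stability is the correct substitute for the $n$ separate instances of $F$-Arens stability used in the multilinear ladder is where I expect the main obstacle to lie, since symmetry forbids peeling off one variable at a time while remaining inside the symmetric tensors.

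With these facts the conclusion is a short diagram chase. Since $\Psi$ and $\varrho_0$ are metric surjections, so is the composite $\Psi\circ\varrho_0=\varrho_n\circ L$, and its image is all of $\Q(^nE;F)$. Given $T\in\Q(^nE;F)$ and $\varepsilon>0$, I choose $w$ with $(\Psi\circ\varrho_0)(w)=T$ and $\|w\|\le\|T\|+\varepsilon$; then $z:=L(w)$ satisfies $\varrho_n(z)=T$ and $\|z\|\le\|w\|\le\|T\|+\varepsilon$, while $\|T\|\le\|z\|$ because $\varrho_n$ is a contraction. Hence $\varrho_n$ is a metric surjection, which is the asserted quotient mapping. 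Finally, $\varrho_n$ factors as $(\widetilde\otimes^{n,s}E'\,\widetilde\otimes\, F,\gamma)\overset1\twoheadrightarrow\Q^{min}(^nE;F)\overset{\le1}{\hookrightarrow}\Q(^nE;F)$; since the composite is a metric surjection and the first arrow always is, the inclusion $\Q^{min}\hookrightarrow\Q$ is onto and isometric, giving $\Q^{min}(^nE;F)\overset1=\Q(^nE;F)$.
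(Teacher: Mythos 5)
Your proposal is correct and coincides with the paper's own argument: the paper proves the theorem by exhibiting exactly the same commutative square (Diagram~\eqref{diagrama para polinomios} in symmetric form, with $\Psi(p)(x)=AB(p)(I_E(x))$) and then stating that the rest follows by mimicking Proposition~\ref{cociente} and Theorem~\ref{Lewis theorem}, which is precisely the Lewis--Stegall factorization, Aron--Berner stability step, and diagram chase you spell out. Your write-up simply makes explicit the details the paper delegates to the reader, including the point that $F$-Aron Berner stability replaces the $n$ instances of $F$-Arens stability once the ladder is collapsed into a single square.
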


\begin{proof}
A moment of thought shows that, as in Diagram \eqref{diagrama para polinomios}, the following square commutes

\begin{equation*}
\xymatrix{ \big( \widetilde{\otimes}^{n,s} \ell_1(B_{E'})\widetilde{\otimes} F, \gamma \big) \ar[rr]^{\varrho_0} \ar@{->>}[d]^{{\otimes}^{n,s} Q_{E'}{\otimes}Id_{F} }
&  & \Q(^nc_0(B_{E'});F) \ar@{->>}[d]_{\Psi} \\
(\widetilde{\otimes}^{n,s} E'\widetilde{\otimes} F, \gamma) \ar[rr]^{\varrho_n}
& & \Q(^nE;F),  \\
}
\end{equation*}
where $\Psi(p)(x):=AB(p)(I_E(x))$. The rest now follows mimicking the proofs of Proposition~\ref{cociente} and Theorem~\ref{Lewis theorem}.
\end{proof}

From the previous theorem we can deduce coincidence results for $\p_{e}$ and $\p_{P\I}$. The proofs can be obtained by standard manipulations (copying what was done in the previous section).

\begin{corollary}
If $E$ is an Asplund space and $F'$ is a dual space which contains no copy of $c_0$, then
$$ (\widetilde\otimes^{n,s}E' \widetilde\otimes F', \gamma_{ext}) \overset 1 \twoheadrightarrow \p_e(^nE;F'),$$ where $\gamma_{ext}$ stands for the mixed tensor norm associated to $\p_{e}.$

In particular, $ (\p_e)^{min}(^nE;F')\overset 1 = \p_e(^nE;F').$
\end{corollary}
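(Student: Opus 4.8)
The plan is to apply the preceding theorem (the polynomial Lewis-type coincidence result just proved) to the ideal $\Q=\p_e$ with target space $F'$. For this I must check its three hypotheses for $\p_e$: that $\p_e$ is extendible, that it is $F'$-Aron Berner stable for the dual space $F'$, and that it enjoys the $F'$-RNp whenever $F'$ contains no copy of $c_0$. This exactly mirrors the role played by Proposition~\ref{lemma extendible} in the multilinear setting, so the whole argument is the polynomial transcription of Corollary~\ref{bases en extendibles}(1), as announced in the text preceding this corollary.

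Extendibility of $\p_e$ is immediate from the definition (and is recorded among the examples following the definition of extendible polynomial ideals). For $F'$-Aron Berner stability I would argue exactly as in the multilinear case: the Aron Berner extension of an extendible polynomial is again extendible, and since $F'$ is a dual space it is $1$-complemented in its bidual $F'''$, so the range of $AB(p)$ remains inside $F'$ without change of norm. This is the polynomial analogue of the fact that $\E$ is $F'$-Arens stable, and it can be obtained by adapting the argument referenced for maximal polynomial ideals.

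The substantive step is the $F'$-RNp. By the polynomial analogue of Proposition~\ref{teo cociente} (which the text notes holds), and because $F'$ contains no copy of $c_0$, it suffices to establish the single coincidence
$$(\widetilde\otimes^{n,s}\ell_1 \widetilde\otimes F', \gamma_{ext}) \overset 1 \twoheadrightarrow \p_e(^nc_0;F').$$
Here I use that $c_0$ is an $\lg_\infty$-space, so every $n$-homogeneous polynomial on $c_0$ is extendible and $\p_e(^nc_0;F')=\p(^nc_0;F')$, while $(\p_e)^{min}(^nc_0;F')=\p_{app}(^nc_0;F')$. The polynomial Littlewood-Bogdanowicz-Pe\l czy\'nski phenomenon, i.e.\ the homogeneous-polynomial version of Proposition~\ref{bogda} noted in the remark following its proof, then gives $\p(^nc_0;F')=\p_{app}(^nc_0;F')$ precisely because $F'$ contains no copy of $c_0$. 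Thus $\p_e$ and $(\p_e)^{min}$ coincide isometrically on $c_0$, which is exactly the $F'$-RNp.

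With the three hypotheses in hand, the previous theorem yields the metric surjection $(\widetilde\otimes^{n,s}E' \widetilde\otimes F', \gamma_{ext}) \overset 1 \twoheadrightarrow \p_e(^nE;F')$. Finally, this quotient map factors through $(\p_e)^{min}(^nE;F')$ via the representation theorem for minimal ideals~\ref{representation theorem minimal}, and since the canonical inclusion $(\p_e)^{min}\hookrightarrow\p_e$ has norm at most one while the composite is a metric surjection, the observation opening Section~\ref{main results} forces the inclusion to be a surjective isometry, giving $(\p_e)^{min}(^nE;F')\overset 1=\p_e(^nE;F')$. The main obstacle is genuinely just the $F'$-RNp verification above, since it is the only place where the hypothesis ``$F'$ contains no copy of $c_0$'' is used; everything else is bookkeeping transcribed from the multilinear case.
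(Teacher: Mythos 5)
Your proposal is correct and takes essentially the same route as the paper, which dispenses with this corollary by remarking that the proofs ``can be obtained by standard manipulations (copying what was done in the previous section)'' --- that is, verifying that $\p_e$ is extendible, $F'$-Aron Berner stable, and has the $F'$-RNp exactly as Proposition~\ref{lemma extendible} does for $\E$ (via the $\lg_\infty$-property of $c_0$, the polynomial analogue of Proposition~\ref{teo cociente}, and the polynomial consequence of Proposition~\ref{bogda} noted after its proof), and then applying the polynomial Lewis-type theorem. Your write-up merely makes those standard manipulations explicit, which matches the paper's intent.
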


The following corollary was proved by Carando and Dimant in \cite{CarDi00}, using completely different techniques (arguments with a more geometric flavor).

\begin{corollary}
If $E$ is an Asplund space, then
$$ (\widetilde\otimes^{n,s}E' \widetilde\otimes F, \gamma_{int}) \overset 1 \twoheadrightarrow \p_{P\I}(^nE;F),$$
where $\gamma_{int}$ stands for the mixed tensor norm associated to $\p_{P\I}$.

In particular, $\p_{\Nu}(^nE;F)\overset 1 = (\p_{P\I})^{min}(^nE;F)\overset 1 = \p_{P\I}(^nE;F).$
\end{corollary}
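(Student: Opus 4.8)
The plan is to apply the polynomial coincidence theorem proved just above to the ideal $\Q = \p_{P\I}$, whose associated mixed tensor norm is $\gamma_{int}$. Since $E$ is Asplund, it suffices to verify that $\p_{P\I}$ satisfies the three hypotheses of that theorem: that it is extendible, $F$-Aron Berner stable, and has the $F$-RNp. In fact I would check the stronger statements that $\p_{P\I}$ is Aron Berner stable and has the vector-RNp, so that the conclusion holds for every Banach space $F$. This is precisely the polynomial counterpart of Proposition~\ref{lemma integral}, and the argument should be a transcription of that one into the symmetric setting.

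Extendibility and Aron Berner stability are already available in the literature: both can be read off from \cite[Theorem 2.12]{CarLa04} and \cite[Theorem 5]{CarLa05}, exactly as in the multilinear discussion inside Proposition~\ref{lemma integral}. Extendibility is immediate from the integral representation, while Aron Berner stability follows because the representing measure of an integral polynomial produces an extension to the bidual whose values remain in $F$. I would simply cite these facts rather than reprove them.

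The substantive step is the vector-RNp. By the polynomial analogue of Proposition~\ref{teo cociente} (valid because Pietsch-integral polynomials are weakly sequentially continuous) together with the representation theorem for minimal ideals~\ref{representation theorem minimal}, it is enough to establish the isometric identity $\p_{P\I}(^nc_0;F)\overset 1 = \p_{\Nu}(^nc_0;F)$. I would argue as in Proposition~\ref{lemma integral}: given $p\in\p_{P\I}(^nc_0;F)$, write
\[ p(x)=\int_{B_{\ell_1}} x'(x)^n\, d\Gamma(x') \]
for a regular $F$-valued measure $\Gamma$ of bounded variation on $(B_{\ell_1},w^*)$ with $\|\Gamma\|$ close to $\|p\|_{\p_{P\I}}$. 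Expanding $x'(x)=\sum_{j} x'(e_j)\, e_j'(x)$ and interchanging the resulting infinite sum with the $F$-valued integral via the Dominated Convergence Theorem applied to the scalar measure $|\Gamma|$, one expresses $p$ as a series of monomials $e_{j_1}'(\cdot)\cdots e_{j_n}'(\cdot)\, A_{j_1,\dots,j_n}$ with $A_{j_1,\dots,j_n}=\int_{B_{\ell_1}} x'(e_{j_1})\cdots x'(e_{j_n})\, d\Gamma(x')$. The estimate $\sum_{j_1,\dots,j_n}\|A_{j_1,\dots,j_n}\|\leq\int_{B_{\ell_1}}\|x'\|^n\, d|\Gamma|\leq\|\Gamma\|$, obtained exactly as in the multilinear computation, shows $p\in\p_{\Nu}(^nc_0;F)$ with $\|p\|_{\p_{\Nu}}\leq\|p\|_{\p_{P\I}}$; the reverse inequality is automatic, giving the isometry.

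With the three hypotheses verified, the polynomial coincidence theorem yields the metric surjection $(\osc E'\widetilde\otimes F,\gamma_{int})\overset 1\twoheadrightarrow\p_{P\I}(^nE;F)$, and the \emph{in particular} clause follows since $(\p_{P\I})^{min}\overset 1=\p_{\Nu}$ and the quotient factors through $(\p_{P\I})^{min}(^nE;F)=\p_{\Nu}(^nE;F)$; this recovers, by a route entirely different from the geometric one of Carando and Dimant \cite{CarDi00}, the stated coincidence. I expect the main obstacle to be the vector-RNp computation: although it parallels the multilinear case, one must correctly handle the symmetric (diagonal) structure of the polynomial integral representation and carefully justify the interchange of the infinite expansion of $x'(x)^n$ with the $F$-valued integral. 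That is where the genuine content lies, everything else being a direct adaptation of the multilinear proofs.
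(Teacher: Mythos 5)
Your proposal is correct and follows essentially the same route as the paper, which proves this corollary by applying the polynomial version of Theorem~\ref{Lewis theorem} to $\p_{P\I}$ and verifying its hypotheses by ``copying what was done in the previous section,'' i.e.\ transcribing Proposition~\ref{lemma integral} (extendibility and Arens/Aron--Berner stability via \cite{CarLa04,CarLa05}, and the vector-RNp via the identity $\p_{P\I}(^nc_0;F)\overset 1 = \p_{\Nu}(^nc_0;F)$ obtained from the integral representation, the expansion of $x'(x)^n$, and dominated convergence against $|\Gamma|$) into the symmetric setting. Your verification of the key estimate $\sum_{j_1,\dots,j_n}\|A_{j_1,\dots,j_n}\|\leq \int_{B_{\ell_1}}\|x'\|^n\,d|\Gamma|\leq\|\Gamma\|$ and your handling of the \emph{in particular} clause via $(\p_{P\I})^{min}\overset 1 = \p_{\Nu}$ match the paper's intended argument exactly.
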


From this two corollaries we can also deduce the analogous results obtained for bases in Corollaries~\ref{bases en extendibles}  and \ref{bases integrales} for the polynomial ideals $\p_e$ and $\p_{P\I}$.
An important comment is in order. To obtain the corresponding analogous, a result in the lines of \cite{GeLama61} (or \cite{GreRya05})  in the mixed tensor product $\left( (\widetilde{\otimes}^{n,s}E) \widetilde{\otimes} F , \gamma \right)$ is needed. We have not found it in the literature, so we include some details here.
Let $E$ and $F$ be Banach spaces with bases $(e_j)_j$ and $(f_l)_l$ respectively. We define a natural basis in $\left( (\widetilde{\otimes}^{n,s}E) \widetilde{\otimes} F , \gamma \right)$.

For $\alpha = (j_1, \dots, j_n) \in \mathbb{N}^n$ an index of $n$-elements, we denote by $e_{\alpha}^s := S(e_{j_1} \otimes \dots \otimes e_{j_n})$, where $S : \otimes^n E \to \otimes^{n,s} E$ is the classical symmetrization operator (see, for example \cite{GreRya05}). Let $(\alpha,l)\in \mathbb{N}^n \times \mathbb{N}$, we say that a tensor of the form $$e_{\alpha}^s \otimes f_l \in \left( (\widetilde{\otimes}^{n,s}E) \widetilde{\otimes} F , \gamma \right)$$
is a monomial given by the index $(\alpha,l)$.
We mix the orderings defined in \cite{GeLama61,GreRya05} to define an ordering for the monomials to be a basis. In other words, we consider the square ordering for the ordered index sets $\N^n$ (endowed with the ordering given by Grecu and Ryan) and $\N$. More precisely, given two indexes $(\alpha,l), (\beta,k) \in \mathbb{N}^n \times \mathbb{N}$ we say that
$(\alpha,l) < (\beta,k)$ if $\alpha < \beta$ or, if $\alpha = \beta$ and $l>k$, where the ordering in $\N^n$ is the one given in \cite[Section 2]{GreRya05}).
To prove this it must be shown that the projections to the monomials with their respective ordering are uniformly bounded. The result follows by using carefully the techniques of the two articles mentioned previously and \cite[7.6 (b)]{Flo01}.

\section*{Acknowledgments}
We are extremely grateful to V. Dimant for reading this paper carefully. She helped us to improve the presentation and gave us several references we have included.
We also are indebted to the anonymous referee who dedicated their time to make us notice the countless typos and the unexplained notation that the paper had. He/She also helped us to improve substantially  the drafting of this article.
We thank D. Carando for many inspiring conversations on this problem and also on related topics over the years.
We highlight the contribution of J. Gutierrez, who brought to our attention that Proposition \ref{bogda} was known and already appeared in the literature.
Finally we thank S. Lassalle for mentioning the existence of \cite[Theorem 8]{CarLa08}.

\end{document}